\DeclareFontFamily{OT1}{rsfs}{}
\DeclareFontShape{OT1}{rsfs}{n}{it}{<-> rsfs10}{}
\DeclareMathAlphabet{\curly}{OT1}{rsfs}{n}{it}
\newcommand{\eqnum}{\refstepcounter{equation}\textup{\tagform@{\theequation}}}
\newcommand\beq[1]{\begin{equation}\label{#1}}
	\newcommand\eeq{\end{equation}}
\newcommand\beqa{\begin{eqnarray*}}
	\newcommand\eeqa{\end{eqnarray*}}
\title[Categorical DT theory]{Categorical Donaldson-Thomas theory 
for local surfaces: $\mathbb{Z}/2$-periodic version}
\date{}
\author{Yukinobu Toda}
\DeclareFontFamily{U}{rsfs}{%
	\skewchar\font127}
\DeclareFontShape{U}{rsfs}{m}{n}{%
	<-6>rsfs5<6-8.5>rsfs7<8.5->rsfs10}{}
\DeclareSymbolFont{rsfs}{U}{rsfs}{m}{n}
\DeclareRobustCommand*\rsfs{%
	\@fontswitch\relax\mathrsfs}
\theoremstyle{plain}
\newtheorem{thm}{Theorem}[section]
\newtheorem{prop}[thm]{Proposition}
\newtheorem{lem}[thm]{Lemma}
\newtheorem{defi}[thm]{Definition}
\newtheorem{rmk}[thm]{Remark}
\newtheorem{cor}[thm]{Corollary}
\newtheorem{prop-defi}[thm]{Proposition-Definition}
\newtheorem{thm-defi}[thm]{Theorem-Definition}
\newtheorem{lem-defi}[thm]{Lemma-Definition}
\newtheorem{conj}[thm]{Conjecture}
\newtheorem{exam}[thm]{Example}
\newcommand{\cC}{\mathcal{C}}
\newcommand{\dD}{\mathcal{D}}
\newcommand{\eE}{\mathcal{E}}
\newcommand{\fF}{\mathcal{F}}
\newcommand{\hH}{\mathcal{H}}
\newcommand{\iI}{\mathcal{I}}
\newcommand{\kK}{\mathcal{K}}
\newcommand{\lL}{\mathcal{L}}
\newcommand{\mM}{\mathcal{M}}
\newcommand{\nN}{\mathcal{N}}
\newcommand{\oO}{\mathcal{O}}
\newcommand{\pP}{\mathcal{P}}
\newcommand{\rR}{\mathcal{R}}
\newcommand{\tT}{\mathcal{T}}
\newcommand{\uU}{\mathcal{U}}
\newcommand{\vV}{\mathcal{V}}
\newcommand{\wW}{\mathcal{W}}
\newcommand{\xX}{\mathcal{X}}
\newcommand{\yY}{\mathcal{Y}}
\newcommand{\zZ}{\mathcal{Z}}
\newcommand{\fM}{\mathfrak{M}}
\newcommand{\fN}{\mathfrak{N}}
\newcommand{\fU}{\mathfrak{U}}
\newcommand{\Supp}{\mathop{\rm Supp}\nolimits}
\newcommand{\Hom}{\mathop{\rm Hom}\nolimits}
\newcommand{\dR}{\mathbf{R}}
\newcommand{\id}{\textrm{id}}
\newcommand{\ch}{\mathop{\rm ch}\nolimits}
\newcommand{\Spec}{\mathop{\rm Spec}\nolimits}
\newcommand{\rank}{\mathop{\rm rank}\nolimits}
\newcommand{\Coh}{\mathop{\rm Coh}\nolimits}
\newcommand{\us}{\mathchar`-\rm{us}}
\newcommand{\cneq}{\mathrel{\raise.095ex\hbox{:}\mkern-4.2mu=}}
\newcommand{\eqcn}{\mathrel{=\mkern-4.5mu\raise.095ex\hbox{:}}}
\newcommand{\Aut}{\mathop{\rm Aut}\nolimits}
\newcommand{\dg}{\mathrm{dg}}
\newcommand{\RHom}{\mathop{\dR\mathrm{Hom}}\nolimits}
\newcommand{\Ker}{\mathop{\rm Ker}\nolimits}
\newcommand{\Ind}{\mathop{\rm Ind}\nolimits}
\newcommand{\MF}{\mathop{\rm MF}\nolimits}
\newcommand{\Crit}{\mathop{\rm Crit}\nolimits}
\newcommand{\coh}{\mathrm{coh}}
\newcommand{\qcoh}{\mathrm{qcoh}}
\newcommand{\Dbc}{D^b_{\rm{coh}}}
\newcommand{\inclusion}{\ar@<-0.3ex>@{^{(}->}[r]}
\newcommand{\upinclusion}{\ar@<-0.3ex>@{^{(}->}[u]}
\newcommand{\leinclusion}{\ar@<-0.3ex>@{^{(}->}[l]}
\newcommand{\doinclusion}{\ar@<-0.3ex>@{^{(}->}[d]}
\newcommand{\diasquare}{\ar@{}[rd]|\square}
\newcommand{\lkakko}{[\![}
\newcommand{\rkakko}{]\!]}
\newcommand{\C}{\mathbb{C}^{\ast}}
\newcommand{\rig}{\mathchar`-\rm{rig}}
\newcommand{\st}{\mathchar`-\rm{st}}
\newcommand{\dDT}{\mathcal{DT}}
\newcommand{\modu}{\mathchar`-\mathrm{mod}}
\renewcommand{\theequation}{%
	\thesection.\arabic{equation}}
\begin{document}
	
	\begin{abstract}
We prove two kinds of 
$\mathbb{Z}/2$-periodic
Koszul duality equivalences for
triangulated categories of matrix factorizations associated with 
$(-1)$-shifted cotangents over quasi-smooth affine derived schemes. 
We use this result to define $\mathbb{Z}/2$-periodic version of 
Donaldson-Thomas categories for local surfaces, whose $\C$-equivariant 
version was introduced and developed in the author's previous paper. 
We compare $\mathbb{Z}/2$-periodic DT category with the 
$\C$-equivariant one, and deduce wall-crossing equivalences of 
$\mathbb{Z}/2$-periodic DT categories from those of $\C$-equivariant
DT categories. 
	\end{abstract}
	
	\maketitle
	
	\setcounter{tocdepth}{1}
	\tableofcontents

	\section{Introduction}
	\subsection{Background}
	This paper is a complement of the author's previous paper~\cite{TocatDT}, where 
	we introduced and developed $\C$-equivariant categorical Donaldson-Thomas theory 
	for local surfaces, i.e. Calabi-Yau 3-folds obtained as 
	total spaces of canonical line bundles on 
	surfaces. 
	It was based on Koszul duality equivalence, formulated as follows. 
	Let $Y$ be a smooth affine scheme over $\mathbb{C}$ and $V \to Y$ a vector bundle with a section $s$. 
	Then the derived zero locus of $s$ is a quasi-smooth 
	affine derived scheme
	\begin{align*}
		\fU \cneq (s=0) \hookrightarrow Y. 
		\end{align*}
	 On the other hand, 
	the above data determines a function 
	on the total space of $V^{\vee} \to Y$
	\begin{align*}
		w \colon V^{\vee} \to \mathbb{C}, \ 
		w(x, v)=\langle s(x), v\rangle
		\end{align*}
	for $x \in Y$ and $v \in V^{\vee}|_{x}$. 
	The above function is of $\C$-weight two 
	with respect to the fiberwise weight two $\C$-action on $V^{\vee}$. 
	The Koszul duality equivalence proved in~\cite{MR3071664, MR2982435, MR3631231, TocatDT}
	is an equivalence of triangulated categories
	\begin{align}\label{intro:koszul}
		\Psi \colon \MF_{\coh}^{\C}(V^{\vee}, w) \stackrel{\sim}{\to}
		\Dbc(\fU). 
		\end{align}
	Here the left hand side is the triangulated
	category of $\C$-equivariant 
	matrix factorizations of $w$ (cf.~\cite{Orsin}), 
	and the right hand side is the derived category 
	of dg-modules over $\oO_{\fU}$ with bounded coherent 
	cohomologies. A key observation is that, under 
	the equivalence (\ref{intro:koszul}),  
	the supports of matrix factorizations correspond to 
	singular supports of coherent sheaves introduced in~\cite{MR3300415}. 
	The $\C$-equivariant DT category 
	is then defined in~\cite{TocatDT} as a certain singular 
	support quotient of the derived category  
	of coherent sheaves on a quasi-smooth derived stack, which 
	through the equivalence (\ref{intro:koszul}) is regarded 
	as a gluing of categories of 
	$\C$-equivariant matrix factorizations. 
	
	On the other hand, 
	triangulated categories of matrix factorizations 
	without $\C$-actions are in general $\mathbb{Z}/2$-periodic. 
	As there is no $\C$-action on a general 
	CY 3-fold, it is more suitable to define
	$\mathbb{Z}/2$-periodic version of 
	DT categories rather than $\C$-equivariant ones. 
	An obstruction toward the $\mathbb{Z}/2$-periodic version 
	is a failure of the equivalence (\ref{intro:koszul}), i.e. 
	the $\mathbb{Z}/2$-periodic triangulated 
	categories of matrix factorizations $\MF^{\mathbb{Z}/2}_{\coh}(V^{\vee}, w)$ is 
	not necessary equivalent to the 
	$\mathbb{Z}/2$-periodic derived categories of coherent sheaves on $\fU$. 
	
	In this paper, we observe that the above issue is caused by a subtlety 
	on a correct 
	definition of $\mathbb{Z}/2$-periodic derived category for $\fU$. 
	We show that there is a $\mathbb{Z}/2$-periodic version of the equivalence (\ref{intro:koszul})
	by using either Positselski's $\mathbb{Z}/2$-periodic derived categories of the second kind~\cite{Postwo}, 
	or singular support quotients of usual $\mathbb{Z}$-graded derived categories 
	of coherent sheaves. We will use the latter 
	$\mathbb{Z}/2$-periodic equivalence to define 
	$\mathbb{Z}/2$-periodic DT categories. 
	We will show that it is recovered from the $\C$-equivariant one 
	up to idempotent completion, and use 
	this result to deduce some wall-crossing equivalence 
	of $\mathbb{Z}/2$-periodic DT categories from the results of 
	$\C$-equivariant DT categories.

	\subsection{$\mathbb{Z}/2$-periodic Koszul duality}
	In this paper, we 
	show the $\mathbb{Z}/2$-periodic version of the equivalence (\ref{intro:koszul}). 
	It is stated as follows: 
	\begin{thm}\emph{(Theorem~\ref{thm:Z2kos})}\label{thm:intro:KZ2}
		There exist equivalences 
		\begin{align*}
			\MF_{\qcoh}^{\mathbb{Z}/2}(V^{\vee}, w)
			\stackrel{\sim}{\to}
			\Ind \Dbc(\fU_{\epsilon})/\Ind \cC_{\Crit(w) \times \{0\}}
			\stackrel{\sim}{\to}
			D^{\rm{co}}(\oO_{\fU}\modu^{\mathbb{Z}/2}),
			\end{align*}
			which restricts to the commutative diagram 
		\begin{align}\label{intro:cdiag}
			\xymatrix{
				\MF_{\coh}^{\mathbb{Z}/2}(V^{\vee}, w) \ar[d]_-{\Psi_{\epsilon}}^-{\sim} \ar@<-0.3ex>@{^{(}->}[r] & 
				\overline{\MF}_{\coh}^{\mathbb{Z}/2}(V^{\vee}, w)	\ar[d]^-{\Psi^{\mathbb{Z}/2}}_-{\sim} \\
				\Dbc(\fU_{\epsilon})/\cC_{\Crit(w) \times \{0\}} \ar@<-0.3ex>@{^{(}->}[r]^-{\Upsilon} & 
				\overline{D}^{\rm{abs}}(\oO_{\fU}\modu_{\rm{fg}}^{\mathbb{Z}/2}). 	
			}
		\end{align}
	Here $\overline{(-)}$ indicates the 
	idempotent completion and 
	the horizontal arrows are fully-faithful with dense images. 
		\end{thm}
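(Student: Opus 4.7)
The plan is to bootstrap from the $\C$-equivariant Koszul equivalence (\ref{intro:koszul}) by analyzing what happens when the $\C$-action is forgotten on the matrix factorization side. On the dual coherent side this is reflected by replacing $\fU$ with a twisted companion $\fU_{\epsilon}$, informally obtained by adjoining a periodicity parameter $\epsilon$ of cohomological degree two, together with a localization along the singular-support subcategory $\cC_{\Crit(w)\times\{0\}}$, whose objects become acyclic after $\mathbb{Z}/2$-folding. Concretely, I would define $\Psi_\epsilon$ as the $\mathbb{Z}/2$-periodic analogue of the Koszul-dual functor used to build $\Psi$, sending a matrix factorization $(P,d)$ to its Koszul dual periodic complex of $\oO_{\fU_\epsilon}$-modules; descent of the target to the quotient $\Dbc(\fU_\epsilon)/\cC_{\Crit(w)\times\{0\}}$ is forced by the standard singular-support estimate for Koszul resolutions, exactly as in~\cite{MR3300415, TocatDT}.

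To show that $\Psi_\epsilon$ is an equivalence, I would compare it with (\ref{intro:koszul}) via the natural forgetful functors on both sides. The point is that the forgetful $\MF_{\coh}^{\C}(V^\vee,w)\to\MF_{\coh}^{\mathbb{Z}/2}(V^\vee,w)$ matches the analogous forgetful operation from $\C$-equivariant coherent sheaves on $\fU$ to non-equivariant coherent sheaves on $\fU_\epsilon$, thanks to the compatibility between supports of matrix factorizations and singular supports of coherent sheaves. Full faithfulness of $\Psi_\epsilon$ then reduces to the $\Hom$-computation provided by (\ref{intro:koszul}) together with a periodization argument, and essential surjectivity (into the quotient) reduces to the corresponding statement for the generators in the $\C$-equivariant case.

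Next, I would pass from the coherent/bounded to the quasi-coherent/unbounded setting in the top row of (\ref{intro:cdiag}) by taking $\Ind$-completions: $\Ind$-completion commutes with Verdier quotients, and $\Ind\MF_{\coh}^{\mathbb{Z}/2}\simeq \MF_{\qcoh}^{\mathbb{Z}/2}$. The identification of this $\Ind$-completion with Positselski's coderived category $D^{\co}(\oO_\fU\modu^{\mathbb{Z}/2})$ should follow from Positselski's comparison theorems in~\cite{Postwo}, reinterpreting the singular-support quotient of $\fU_\epsilon$-modules as the $\mathbb{Z}/2$-periodic coderived category over $\fU$. Restricting the $\Ind$-completed equivalence to compact objects and passing to idempotent completion then yields $\Psi^{\mathbb{Z}/2}$ on the lower-right of (\ref{intro:cdiag}), and commutativity of the square follows by construction from the functoriality of $\Psi_\epsilon$ and $\Upsilon$.

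The most delicate step will be the identification of the singular-support Verdier quotient over $\fU_\epsilon$ with Positselski's coderived category over $\fU$ in the $\mathbb{Z}/2$-graded framework. This requires matching compact generators on both sides, controlling the additional objects introduced by idempotent completion in (\ref{intro:cdiag}), and verifying that $\overline{D}^{\abs}(\oO_{\fU}\modu_{\fg}^{\mathbb{Z}/2})$ is precisely the idempotent completion of $\Dbc(\fU_\epsilon)/\cC_{\Crit(w)\times\{0\}}$ and not strictly larger. A clean way to carry this out is to reduce to the local statement on a formal neighborhood of $\Crit(w)$, where $w$ is equivalent to a quadratic form and both sides can be computed explicitly via Koszul/Clifford techniques.
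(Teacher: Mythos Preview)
Your proposal has genuine gaps, both in the setup and in the key mechanisms.

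First, your description of $\fU_{\epsilon}$ is off: in the paper $\fU_{\epsilon}=\fU\times\Spec\mathbb{C}[\epsilon]$ with $\deg(\epsilon)=-1$, not degree two. This is not cosmetic. With $\deg(\epsilon)=-1$, $\fU_{\epsilon}$ is the derived zero locus of the section $(s,0)\colon Y\to V\times\mathbb{A}^1$, so that $t_0(\Omega_{\fU_{\epsilon}}[-1])=\Crit(w)\times\mathbb{A}^1$ and one is in position to apply the $\C$-equivariant Koszul duality of Theorem~\ref{thm:knoer} to the vector bundle $V\times\mathbb{A}^1$ on $Y$. With a degree-two parameter you would not land in $\Dbc(\fU_{\epsilon})$ at all.

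Second, you are missing the actual engine behind $\Psi_{\epsilon}$. The paper does not build $\Psi_{\epsilon}$ by ``periodizing'' the Koszul functor and then comparing via forgetful functors. Instead it proves the stack isomorphism $[V^{\vee}/\mu_2]\cong[(V^{\vee}\times\C)/\C]$ (Lemma~\ref{lem:quotient}, Lemma~\ref{lem:equivA}), which gives an equivalence
\[
\MF_{\coh}^{\mathbb{Z}/2}(V^{\vee},w)\simeq \MF_{\coh}^{\C}(V^{\vee}\times\C,w_{\epsilon}).
\]
One then applies the ordinary $\C$-equivariant Koszul duality on $V^{\vee}\times\mathbb{A}^1$, and the passage from $\mathbb{A}^1$ to the open $\C\subset\mathbb{A}^1$ becomes, under (\ref{Psi:supp2}), precisely the singular-support quotient by $\cC_{\Crit(w)\times\{0\}}$. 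Your sketch never produces a $\mathbb{Z}$-graded $\oO_{\fU_{\epsilon}}$-module out of a $\mathbb{Z}/2$-periodic factorization; the naive Koszul dual of such a factorization is a $\mathbb{Z}/2$-graded $\oO_{\fU}$-module, which is the wrong target.

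Third, the identification of the right-hand column with $D^{\rm{co}}(\oO_{\fU}\modu^{\mathbb{Z}/2})$ is not obtained in the paper from ``Positselski's comparison theorems'' nor by any local reduction to quadratic $w$. The equivalence $\Psi^{\mathbb{Z}/2}\colon \MF_{\qcoh}^{\mathbb{Z}/2}(V^{\vee},w)\stackrel{\sim}{\to}D^{\rm{co}}(\oO_{\fU}\modu^{\mathbb{Z}/2})$ is proved \emph{independently} (Proposition~\ref{prop:Z2dual}) by an explicit Fourier--Mukai kernel computation: one writes down $\Phi^{\mathbb{Z}/2}=\kK_s^{\mathbb{Z}/2}\otimes_{\oO_{\fU}}(-)$ and checks $\Phi^{\mathbb{Z}/2}\circ\Psi^{\mathbb{Z}/2}\cong\id$ and $\Psi^{\mathbb{Z}/2}\circ\Phi^{\mathbb{Z}/2}\cong\id$ by identifying the kernels with diagonal objects. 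The functor $\Upsilon$ is then defined concretely by $(M,d_M)\mapsto(M^{\mathbb{Z}/2},d_M+\epsilon)$, shown to kill $\cC_{\Crit(w)\times\{0\}}$ (Lemma~\ref{lem:Upsilon}), and proved to be an equivalence only a posteriori, by checking commutativity of (\ref{dia:MFind}) against the two already-established equivalences (using the explicit formula for $\Phi_{\epsilon}$ in Lemma~\ref{lem:Phie}). Your proposed route---deduce $\Upsilon$ is an equivalence from Positselski and then restrict to compacts---inverts the logical order and leaves the hardest step (why $\Upsilon$ is an equivalence) unproved.

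Finally, the suggestion to reduce to a formal neighborhood where $w$ is quadratic is neither justified (there is no Morse lemma available here in general) nor used anywhere in the argument.
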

	Here we explain the notation of the above theorem. 
	The derived scheme $\fU_{\epsilon}$ is defined by 
	$\fU_{\epsilon} \cneq \fU \times \Spec \mathbb{C}[\epsilon]$
	with $\deg(\epsilon)=-1$. 
	Its $(-1)$-shifted cotangent 
	has underlying classical scheme 
	given by $\Crit(w) \times \mathbb{A}^1$. 
	The subcategory 
	\begin{align*}
		\cC_{\Crit(w) \times \{0\}} \subset \Dbc(\fU_{\epsilon})
		\end{align*}
	consists of objects whose singular supports are contained in 
	$\Crit(w) \times \{0\} \subset \Crit(w) \times \mathbb{A}^1$. 
	The triangulated categories 
	$D^{\rm{co}}(\oO_{\fU} \modu^{\mathbb{Z}/2})$, 
	$D^{\rm{abs}}(\oO_{\fU}\modu_{\rm{fg}}^{\mathbb{Z}/2})$ are
	derived categories of $\mathbb{Z}/2$-periodic dg $\oO_{\fU}$-modules 
	of the second kind~\cite{Postwo}, which we 
	will review in Subsection~\ref{subsec:review:dcat}. 
	These are not necessary equivalent to the usual 
	$\mathbb{Z}/2$-periodic derived categories, 
	e.g. an object with trivial cohomologies may 
	be non-zero in these categories. 
	The former category $D^{\rm{co}}(\oO_{\fU} \modu^{\mathbb{Z}/2})$
	is regraded as a $\mathbb{Z}/2$-periodic analogue of 
	ind-coherent sheaves~\cite{MR3136100}
	(see Remark~\ref{rmk:indcoh}), though 
	it is not an ind-completion of the usual $\mathbb{Z}/2$-periodic 
	derived category of coherent sheaves. 
	
	\subsection{$\mathbb{Z}/2$-periodic categorical DT theory}
	We use the left vertical arrow in (\ref{intro:cdiag}) to define 
	$\mathbb{Z}/2$-periodic DT categories. 
	Let $\fM$ be a quasi-smooth and QCA derived stack (see Subsection~\ref{subsec:qsmooth})
	with classical truncation $\mM=t_0(\fM)$,  
	and $\Omega_{\fM}[-1]$ its $(-1)$-shifted cotangent derived stack. 
	For an open substack 
	\begin{align*}
		\nN^{\rm{ss}} \subset \nN \cneq t_0(\Omega_{\fM}[-1])
		\end{align*}
	with complement $\zZ$, we
	set 
	\begin{align*}
		\zZ_{\epsilon} \cneq \C(\zZ \times \{1\}) \sqcup 
		(\nN \times \{0\})
		\subset \nN \times \mathbb{A}^1. 
		\end{align*}
	Here $\C$ acts on the fibers of $\nN \to \mM$
	and $\mathbb{A}^1$ by weight two. 
	Then $\zZ_{\epsilon}$ is a conical closed substack 
	in the $(-1)$-shifted cotangent over 
	$\fM_{\epsilon} \cneq \fM \times \Spec \mathbb{C}[\epsilon]$
	for $\deg(\epsilon)=-1$. 
	We define the 
	$\mathbb{Z}/2$-periodic DT category 
	$\dDT^{\mathbb{Z}/2}(\nN^{\rm{ss}})$ by 
	\begin{align}\label{intro:defDT}
		\dDT^{\mathbb{Z}/2}(\nN^{\rm{ss}})
		\cneq \Dbc(\fM_{\epsilon})/\cC_{\zZ_{\epsilon}}. 
		\end{align}
	By the left vertical equivalence in (\ref{intro:cdiag}), 
	the above category may be regarded as a gluing of $\mathbb{Z}/2$-periodic 
	categories of matrix factorizations. 
	Its dg-enhancement $\dDT^{\mathbb{Z}/2}(\nN^{\rm{ss}})_{\rm{dg}}$
	is similarly defined by taking 
	the Drinfeld quotient instead of Verdier quotient. 
	We show that, up to idempotent completion, 
	the $\mathbb{Z}/2$-periodic DT category is
	recovered from the $\C$-equivariant DT category 
	$\dDT^{\C}(\nN^{\rm{ss}})$ defined in~\cite{TocatDT}. 
	\begin{thm}\emph{(Theorem~\ref{thm:compareDT})}\label{thm:intro:compare}
		Suppose that $\zZ \subset \nN$ is $\C$-invariant 
		and $\Ind \cC_{\zZ} \subset \Ind \Dbc(\fM)$
		is compactly generated. 
		Then there is an equivalence 
			\begin{align}\notag
			\overline{\mathcal{DT}}^{\mathbb{Z}/2}(\nN^{\rm{ss}})_{\rm{dg}}
			\simeq \dR \underline{\hH om}(\mathbb{C}[u^{\pm 1}], 
			\Ind \mathcal{DT}^{\C}(\nN^{\rm{ss}})_{\rm{dg}})^{\rm{cp}}. 	
		\end{align}
	Here $\deg(u)=2$, $\mathbb{C}[u^{\pm 1}]$ is regarded as a 
	dg-category with one object, 
	and $\dR \underline{\hH om}(-, -)$
	is an inner Hom of dg-categories~\cite{Todg}. 
		\end{thm}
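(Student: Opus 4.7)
The plan is to reduce to the local Koszul duality model, apply both Theorem~\ref{thm:intro:KZ2} and the $\C$-equivariant equivalence (\ref{intro:koszul}), and then invoke a general mechanism identifying the forgetful functor from $\C$-equivariant to $\mathbb{Z}/2$-periodic structures with the inner Hom $\dR \underline{\hH om}(\mathbb{C}[u^{\pm 1}], -)$ on $\mathbb{C}[u^{\pm 1}]$-linear dg-categories.

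First I would work Zariski locally on $\fM$, presenting it as $\fU = (s=0) \hookrightarrow Y$ for a section $s$ of a vector bundle $V \to Y$, so that $\nN \simeq \Crit(w) \subset V^{\vee}$ for the associated function $w$ of $\C$-weight two. Under Theorem~\ref{thm:intro:KZ2} the local model of $\overline{\dDT}^{\mathbb{Z}/2}(\nN^{\rm{ss}})_{\rm{dg}}$ is identified with the dg-enhancement of $\overline{\MF}^{\mathbb{Z}/2}_{\coh}(V^{\vee}, w)$, while by (\ref{intro:koszul}) the local model of $\dDT^{\C}(\nN^{\rm{ss}})_{\rm{dg}}$ is the dg-enhancement of $\MF^{\C}_{\coh}(V^{\vee}, w)$. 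Since $w$ has $\C$-weight two, the ind-completion of the $\C$-equivariant matrix factorization category is naturally $\mathbb{C}[u^{\pm 1}]$-linear with $\deg u = 2$; forgetting the $\C$-equivariance collapses the $\mathbb{Z}$-grading to $\mathbb{Z}/2$-grading, and at the level of dg-enhancements this collapse is modeled precisely by the inner Hom $\dR \underline{\hH om}(\mathbb{C}[u^{\pm 1}], -)$.

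The second step is to verify that, upon restricting to compact objects, this inner Hom recovers the $\mathbb{Z}/2$-periodic matrix factorization category up to idempotent completion. The key input is that $\mathbb{C}[\epsilon]$ with $\deg \epsilon = -1$, appearing in $\fM_{\epsilon} = \fM \times \Spec \mathbb{C}[\epsilon]$, is Koszul dual to $\mathbb{C}[u^{\pm 1}]$. Under Theorem~\ref{thm:intro:KZ2} this translates the $\mathbb{C}[u^{\pm 1}]$-action on the $\C$-equivariant side into the extra $\mathbb{C}[\epsilon]$-factor—equivalently, the singular support quotient over $\fM_{\epsilon}$—on the $\mathbb{Z}/2$-periodic side. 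Combined with standard results on the compact generation of absolute derived categories of the second kind reviewed in Subsection~\ref{subsec:review:dcat}, this reduces the local statement to a comparison of compact objects in two explicit presentations.

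Finally, I would globalize via the gluing construction of DT categories from~\cite{TocatDT}. The hypothesis that $\Ind \cC_{\zZ}$ is compactly generated ensures that the singular support quotient commutes with $\Ind$, so the functor $\dR \underline{\hH om}(\mathbb{C}[u^{\pm 1}], -)$ commutes with the localizations defining both DT categories and the local equivalences assemble via Zariski descent into the global one. The main technical obstacle is precisely the matching of compact objects with the idempotent completion on the left hand side: compact objects in inner Hom categories can produce new idempotents not visible in the uncompleted source, which both explains the necessity of $\overline{(-)}$ in the statement and is the step demanding the most careful control of compactness throughout the reduction.
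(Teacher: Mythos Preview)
Your overall strategy matches the paper's: reduce to local charts $\fU \to \fM$, use the Koszul duality between $\mathbb{C}[\epsilon]$ and $\mathbb{C}[u^{\pm 1}]$, and glue via limits. However, two points in your framing are imprecise and obscure the actual mechanism.

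First, the phrase ``the ind-completion of the $\C$-equivariant matrix factorization category is naturally $\mathbb{C}[u^{\pm 1}]$-linear'' is misleading: that category is not $\mathbb{Z}/2$-periodic, and no module structure over $\mathbb{C}[u^{\pm 1}]$ is invoked. The paper instead uses the general identity $\dR\underline{\hH om}(\mathbb{C}[u^{\pm 1}], \Ind\dD) \simeq \Ind(\dD \otimes \mathbb{C}[u^{\pm 1}])$, valid for any small $\dD$, to rewrite the right-hand side as $\lim_{\fU}\Ind\bigl((\Dbc(\fU)_{\dg}/\cC_{\alpha^{\ast}\zZ,\dg})\otimes \mathbb{C}[u^{\pm 1}]\bigr)$.

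Second, and more substantially, your proposal asserts that ``this collapse is modeled precisely by the inner Hom'' but does not say how to prove it. The paper does this concretely: it identifies $\mathbb{C}[u^{\pm 1}]$ with the endomorphism dg-algebra of $\oO_0$ in $\Dbc(\bullet_{\epsilon})_{\dg}/\cC_{\{0\},\dg}$, and then builds the comparison functor
\[
(\Dbc(\fU)_{\dg}/\cC_{\alpha^{\ast}\zZ,\dg})\otimes \mathbb{C}[u^{\pm 1}] \longrightarrow \Dbc(\fU_{\epsilon})_{\dg}/\cC_{\alpha_{\epsilon}^{\ast}\zZ_{\epsilon},\dg}, \qquad \eE \mapsto \eE \boxtimes \oO_0.
\]
Full faithfulness is checked after translating via Koszul duality to $\MF^{\C}$, where it becomes $\pP \mapsto \pP \boxtimes \oO_{\C}$ and follows from a known exterior-product lemma for factorization categories. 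Essential surjectivity after ind-completion uses that $\Ind\Dbc(\fU_{\epsilon})$ is generated by exterior products and that $\oO_0$ generates $\Dbc(\bullet_{\epsilon})$. Your sketch contains the right ingredients (the $\mathbb{C}[\epsilon]$--$\mathbb{C}[u^{\pm 1}]$ duality, the role of $\fM_{\epsilon}$, compact generation) but leaves this construction and its verification implicit; making it explicit is where the argument actually happens.
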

	
	\subsection{Wall-crossing equivalences of $\mathbb{Z}/2$-periodic DT categories for local surfaces}
	In~\cite{TocatDT}, we proved several wall-crossing equivalences 
	or fully-faithful functors
	of $\C$-equivariant DT categories for local surfaces. 
	We can apply these results to show 
	wall-crossing equivalences or fully-faithful 
	functors of $\mathbb{Z}/2$-periodic DT categories 
	using Theorem~\ref{thm:intro:compare}. 
	We give one of such examples: wall-crossing equivalence of 
	DT categories for 
	one dimensional stable sheaves. 
	
	Let $S$ be a smooth projective surface and 
	\begin{align*}
		\pi \colon X=\mathrm{Tot}_S(\omega_S) \to S
		\end{align*}
	 the 
	associated local surface. 
	For a stability condition $\sigma$
	and a numerical class $v$, 
	we denote by $M_X^{\sigma}(v)$ the moduli space of 
	$\sigma$-semistable compactly supported coherent sheaves on $X$. 
	Under the assumption that there is no strictly $\sigma$-semistable sheaves, 
	we define the $\mathbb{Z}/2$-periodic DT category 
	\begin{align}\notag
		\dDT^{\mathbb{Z}/2}(M_X^{\sigma}(v))
		\end{align}
	for $M_X^{\sigma}(v)$, 
	following the construction of the basic model (\ref{intro:defDT})
	(see Definition~\ref{def:period}). 
	
	Let $v$ be a primitive numerical class of a one dimensional 
	sheaf, and suppose that $\sigma$ lies in a chamber of 
	the space of stability conditions. 
	We have the following wall-crossing diagram 
	\begin{align*}
		\xymatrix{
			M_X^{\sigma_+}(v) \ar[rd] & & \ar[ld] M_X^{\sigma_-}(v) \\
			& M_X^{\sigma}(v) &
		}
	\end{align*}
which 
	is a \textit{d-critical flop}
	in the sense of~\cite{Toddbir}, i.e. it is a d-critical analogue 
	of flop in birational geometry. 
As an analogy of D/K equivalence conjecture~\cite{B-O2, MR1949787},
we conjecture that 
the DT categories are equivalent under the above wall-crossing diagram. 
In~\cite{TocatDT}, we proved the wall-crossing equivalence for 
$\C$-equivariant DT categories under some assumption of 
preservation of stability under push-forward to the surface. 
By directly applying Theorem~\ref{thm:intro:compare}, we have the following: 
	\begin{thm}\label{intro:thm:period2}
		Suppose that any $\sigma$-semistable sheaf on $X$
		with numerical class $v$ push-forwards to a $\sigma$-semistable 
		sheaf on $S$. 
	Then we have an equivalence
	\begin{align}\notag
		\overline{\dD\tT}^{\mathbb{Z}/2}(M_{X}^{\sigma_+}(v)) \stackrel{\sim}{\to}
		\overline{\dD\tT}^{\mathbb{Z}/2}(M_{X}^{\sigma_-}(v)).
	\end{align}
\end{thm}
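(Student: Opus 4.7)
The plan is a direct transfer from the $\C$-equivariant setting to the $\mathbb{Z}/2$-periodic setting via Theorem~\ref{thm:intro:compare}. The key input is the $\C$-equivariant wall-crossing equivalence
\begin{align*}
\dDT^{\C}(M_X^{\sigma_+}(v))_{\rm{dg}} \simeq \dDT^{\C}(M_X^{\sigma_-}(v))_{\rm{dg}}
\end{align*}
from the author's previous paper~\cite{TocatDT}, which was established precisely under the stated push-forward hypothesis.

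First we verify that the hypotheses of Theorem~\ref{thm:intro:compare} hold for both moduli stacks. The derived moduli stack $\fM$ of compactly supported coherent sheaves on $X$ with numerical class $v$ is quasi-smooth and QCA, and the classical truncation $\nN = t_0(\Omega_{\fM}[-1])$ admits the standard description in terms of pairs of a sheaf on $S$ together with a Higgs field valued in $\omega_S$, via the push-forward $\pi_{\ast}$. The $\C$-invariance of the unstable locus $\zZ \subset \nN$ follows because $\sigma_{\pm}$-semistability on $X$ is preserved by the fiberwise scaling $\C$-action on $X \to S$, and compact generation of $\Ind \cC_{\zZ}$ is a standard input for moduli of semistable sheaves on local surfaces, as established in~\cite{TocatDT}.

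Next, we take the ind-completion of the $\C$-equivariant equivalence to obtain an equivalence of compactly generated dg-categories, and apply the inner Hom functor $\dR \underline{\hH om}(\mathbb{C}[u^{\pm 1}], -)$. Since inner Hom is a 2-functor on dg-categories, quasi-equivalences are preserved; passing to compact objects on both sides and invoking Theorem~\ref{thm:intro:compare} identifies the two sides with $\overline{\dDT}^{\mathbb{Z}/2}(M_X^{\sigma_{\pm}}(v))_{\rm{dg}}$, yielding the desired equivalence (and hence the claimed triangulated equivalence after passing to homotopy categories).

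The main obstacle will be ensuring that the $\C$-equivariant wall-crossing equivalence from~\cite{TocatDT} is realized at the dg level compatibly with ind-completion, so that the inner Hom construction applies cleanly. The equivalence in~\cite{TocatDT} is built from window subcategories and the Koszul duality~(\ref{intro:koszul}), each step of which has a natural dg-enhancement, so this compatibility should hold; nevertheless, this is the technical point requiring care rather than a routine verification, as opposed to the essentially formal transfer via Theorem~\ref{thm:intro:compare}.
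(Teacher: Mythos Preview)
Your overall approach matches the paper's: invoke the $\C$-equivariant wall-crossing equivalence from~\cite{TocatDT}, lift it to the dg level, ind-complete, apply $\dR\underline{\hH om}(\mathbb{C}[u^{\pm 1}],-)$, pass to compact objects, and identify both sides via Theorem~\ref{thm:intro:compare}.

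There is, however, a genuine misstatement in your setup. The derived stack $\fM$ to which Theorem~\ref{thm:intro:compare} is applied is the (rigidified, quasi-compact open piece of the) moduli $\fM_S(v)^{\C\rig}_{\rm{qc}}$ of coherent sheaves on the \emph{surface} $S$, not on $X$. It is the classical truncation of its $(-1)$-shifted cotangent that is identified with the moduli $\mM_X(v)$ of compactly supported sheaves on $X$, via the isomorphism~(\ref{isom:etaM}). The moduli of sheaves on the Calabi--Yau threefold $X$ is not quasi-smooth, so your $\fM$ as stated would not satisfy the hypotheses. With this correction your verification of $\C$-invariance of $\zZ$ and the remaining hypotheses goes through.

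For the dg-lift you flag as the delicate point, the paper proceeds exactly as you anticipate: the equivalence in~\cite{TocatDT} is realized by a window subcategory $\wW \subset \Dbc(\fM_S^{\sigma}(v))$ such that the compositions with the quotient functors to $\dDT^{\C}(M_X^{\sigma_{\pm}}(v))$ are equivalences. Since Verdier quotient functors have canonical Drinfeld dg-enhancements, one obtains a zig-zag of dg-equivalences
\begin{align*}
\dDT^{\C}(M_X^{\sigma_+}(v))_{\rm{dg}} \leftarrow \wW_{\rm{dg}} \to \dDT^{\C}(M_X^{\sigma_-}(v))_{\rm{dg}},
\end{align*}
and the functorial operations (ind-completion, inner Hom, compact objects) are applied to this zig-zag.
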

The examples where the assumption of Theorem~\ref{intro:thm:period2}
is satisfied
are discussed in~\cite[Section~5.4.4]{TocatDT}, e.g. 
it is satisfied when the curve class of $v$ is reduced. 
In a similar way, we can also prove $\mathbb{Z}/2$-periodic 
version of wall-crossing equivalences or fully-faithful 
functors of DT categories for MNOP/PT moduli spaces, proved 
in~\cite{TocatDT} for $\C$-equivariant case (see~Remark~\ref{rmk:DK}). 
	
	\subsection{Acknowledgements}
	The author would like to thank Yalong Cao, Hsueh-Yung Lin, Tudor P{\u{a}}durairu and 
	Wai-kit Yeung for helpful discussions. 
	The author is supported by World Premier International Research Center
	Initiative (WPI initiative), MEXT, Japan, and Grant-in Aid for Scientific
	Research grant (No.~19H01779) from MEXT, Japan.

	\subsection{Notation and convention}\label{subsec:notation}
	In this paper, all the schemes or (derived) stacks
	are locally of finite presentation over 
	$\mathbb{C}$. 
	For a scheme or derived stack $Y$
	and a quasi-coherent sheaf $\fF$ on it, we denote by 
	$S_{\oO_Y}(\fF)$ its symmetric product
	$\oplus_{i\ge 0} \mathrm{Sym}^i_{\oO_Y}(\fF)$. 
	We omit the subscript $\oO_Y$ if it is clear from the context. 
	For a derived stack $\mathfrak{M}$, we always 
	denote by $t_0(\mathfrak{M})$ the underived
	stack given by the truncation. 
	For an algebraic group $G$ which acts on $Y$, 
	we denote by $[Y/G]$ the associated 
	quotient stack. 
	
	In this paper, all the dg-categories or triangulated categories 
	are defined over 
	$\mathbb{C}$. 
	The category of dg-categories is denoted by 
	$\mathrm{dgCat}$. 
	Its objects consist of dg-categories over $\mathbb{C}$
	with morphisms given by dg-functors. 
	By Tabuada~\cite{Tab}, there is a cofibrantly generated 
	model category structure on 
	$\mathrm{dgCat}$, 
	whose localization by weak equivalences
	is denoted by
	$\mathrm{Ho}(\mathrm{dgCat})$. 
An equivalence between dg-categories
is defined to be 
 an isomorphism 
	in $\mathrm{Ho}(\mathrm{dgCat})$. 
		For
	a triangulated category $\dD$ and its 
	triangulated subcategory $\dD' \subset \dD$, 
	we denote by 
	$\dD/\dD'$
	its Verdier quotient. 
	In the case that $\dD$ is a dg-category 
	and $\dD' \subset \dD$ is a dg-subcategory, 
	its Drinfeld dg-quotient~\cite{MR3037900} 
	is also denoted by $\dD/\dD'$. 
	We denote by $\dD^{\rm{cp}} \subset \dD$ the 
	subcategory of compact objects. 
	A subcategory $\dD' \subset \dD$ is called dense 
	if any object in $\dD$ is a direct summand of an object in $\dD'$. 
	
For a dg-category $\dD$, 
we denote by $\Ind \dD$ 
its dg-categorical 
	ind-completion of $\dD$ (denoted as $\widehat{\dD}$
	in~\cite[Section~7]{Todg}, also see~\cite[Section~5.3.5]{Ltopos} in the 
	context of $\infty$-category). 
For dg-categories $\dD_1, \dD_2$, we denote by 
$\dD_1 \otimes \dD_2$ the dg-category 
whose set of objects is $\mathrm{Obj}(\dD_1) \times \mathrm{Obj}(\dD_2)$
and 
\begin{align*}
	\Hom^{\ast}_{\dD_1 \otimes \dD_2}((E_1, E_2), (F_1, F_2))
	=\Hom^{\ast}_{\dD_1}(E_1, F_1) \otimes 
	\Hom^{\ast}_{\dD_2}(E_2, F_2). 
	\end{align*}
By~\cite[Corollary~6.4]{Todg}, 
for two dg-categories $\dD_1, \dD_2$, there exists 
an inner Hom of dg-category 
$\dR \underline{\hH om}(\dD_1, \dD_2) \in \mathrm{Ho}(\mathrm{dgCat})$
satisfying that (see~\cite[Corollary~7.6]{Todg})
\begin{align}\notag
	\dR \underline{\hH om}(\dD_1, \Ind{\dD}_2) 
	\simeq 
	\Ind(\dD_1^{\rm{op}} \otimes \dD_2). 
	\end{align}
	When we discuss limits, ind-completions, 
inner Hom for triangulated categories, 
	we (implicitly or explicitly) take these functors on dg-enhancements
		or $\infty$-categorical enhancements 
(which will be obviously given in the context)
	and then take their homotopy categories. 
	
	\section{Some background}
	In this section, we review several background used in this paper: 
	derived categories of first and second kind in~\cite{Postwo}, 
	dg or triangulated categories of factorizations~\cite{Ornonaff, MR3366002, MR3112502}, $\C$-equivariant 
	Koszul duality~\cite{MR3071664, MR2982435, MR3631231, TocatDT} and singular supports of (ind) coherent sheaves~\cite{MR3300415}. 
	\subsection{Review of derived categories}\label{subsec:review:dcat}
	Let $R$ be a commutative differential graded algebra over $\mathbb{C}$ with 
	non-positive degrees. 
	Let $\Gamma$ be either $\mathbb{Z}$ or $\mathbb{Z}/2$, 
	and we regard
	$R$ as a $\Gamma$-graded dg-algebra. 
	We recall some basic terminology of 
	derived categories of $\Gamma$-graded dg-modules over $R$, 
	following~\cite{Postwo}. 
	
	We denote by $R\modu^{\Gamma}$ the dg-category of $\Gamma$-graded 
	dg-modules over $R$, and $\mathrm{Ho}(R\modu^{\Gamma})$ its homotopy category. 
	We have the full dg or triangulated subcategories
	\begin{align*}
			\mathrm{Acy}_{\rm{dg}} \subset R\modu^{\Gamma}, \ 
		\mathrm{Acy} \subset \mathrm{Ho}(R\modu^{\Gamma})
		\end{align*}
	consisting of acyclic objects, i.e. 
	$\hH^i(M)=0$ for all $i \in \Gamma$. 
	The dg or triangulated derived categories of dg $R$-modules are defined 
	by the quotient categories
	\begin{align}\notag
		D(R\modu^{\Gamma})_{\rm{dg}} \cneq 
		R\modu^{\Gamma}/\mathrm{Acy}_{\rm{dg}}, \ 
		D(R\modu^{\Gamma}) \cneq \mathrm{Ho}(R\modu^{\Gamma})/\mathrm{Acy}. 
		\end{align}
	Here we take the Drinfeld quotient~\cite{MR2028075} for the quotient of 
	dg-categories, and take the Verdier quotient for the quotient of 
	triangulated categories. 
	The former dg-category is a dg-enhancement of the latter triangulated 
	category, i.e. 
	the homotopy category of $D(R\modu^{\Gamma})_{\rm{dg}}$ is 
	equivalent to $D(R\modu^{\Gamma})$. 
	We also have the full dg or triangulated subcategories
	\begin{align}\notag
		D_{\rm{fg}}(R\modu^{\Gamma})_{\rm{dg}} \subset
		D(R\modu^{\Gamma})_{\rm{dg}}, \ 
		D_{\rm{fg}}(R\modu^{\Gamma}) \subset D(R\modu^{\Gamma})
		\end{align}
	consisting of objects $M$ such that 
	$\oplus_{i \in \Gamma}\hH^i(M)$ is 
	finitely generated over the commutative algebra 
	$\hH^0(R)$. 
	The above dg or triangulated categories
	for $\Gamma=\mathbb{Z}$ are equivalent to (quasi) coherent sheaves on the 
	affine derived scheme $\Spec R$, so they are `usual' derived 
	categories. 
		
	There are other kinds of constructions of derived categories
	by Positselski~\cite{Postwo}. 
		We have the full dg or triangulated subcategories
	\begin{align*}
		\mathrm{Acy}^{\rm{co}}_{\rm{dg}} \subset 
		R\modu^{\Gamma}, \ 
		\mathrm{Acy}^{\rm{co}} \subset 
		\mathrm{Ho}(R\modu^{\Gamma})
	\end{align*}
where $\mathrm{Acy}^{\rm{co}}$
is the smallest thick triangulated subcategory 
which contains totalizations of short exact sequences of 
dg $R$-modules and closed under taking infinite direct sums, 
and $\mathrm{Acy}^{\rm{co}}_{\rm{dg}}$ consists of objects 
which are isomorphic to objects in $\mathrm{Acy}^{\rm{co}}$
in the homotopy category. 
An object in the above subcategories is called \textit{coacyclic}. 
The dg or triangulated \textit{coderived categories}
are defined by 
\begin{align*}
		D^{\rm{co}}(R\modu^{\Gamma})_{\rm{dg}}\cneq 
	R\modu^{\Gamma}/\mathrm{Acy}^{\rm{co}}_{\rm{dg}}, \ 
		D^{\rm{co}}(R\modu^{\Gamma})\cneq 
	\mathrm{Ho}(R\modu^{\Gamma})/\mathrm{Acy}^{\rm{co}}. 
	\end{align*}

		Let $R\modu_{\rm{fg}}^{\Gamma}$
	be the dg subcategory of $R\modu^{\Gamma}$ consisting of finitely 
	generated dg $R$-modules, and $\mathrm{Ho}(R\modu_{\rm{fg}}^{\Gamma})$ its 
	homotopy category. 
		We have the full dg or triangulated subcategories
	\begin{align*}
		\mathrm{Acy}^{\rm{abs}}_{\rm{dg}} \subset 
		R\modu_{\rm{fg}}^{\Gamma}, \ 
		\mathrm{Acy}^{\rm{abs}} \subset 
		\mathrm{Ho}(R\modu_{\rm{fg}}^{\Gamma})
	\end{align*}
	where $\mathrm{Acy}^{\rm{abs}}$
	is the smallest thick triangulated subcategory 
	which contains totalizations of short exact sequences of finitely generated
	dg $R$-modules, 
	and $\mathrm{Acy}^{\rm{abs}}_{\rm{dg}}$ consists of objects 
	which are isomorphic to objects in $\mathrm{Acy}^{\rm{abs}}$
	in the homotopy category. An object in the above subcategories is called \textit{absolutely acyclic}. 
	The dg or triangulated \textit{absolute derived categories}
	are defined by 
	\begin{align*}
		D^{\rm{abs}}(R\modu_{\rm{fg}}^{\Gamma})_{\rm{dg}}\cneq 
		R\modu_{\rm{fg}}^{\Gamma}/\mathrm{Acy}^{\rm{abs}}_{\rm{dg}}, \ 
		D^{\rm{abs}}(R\modu_{\rm{fg}}^{\Gamma})\cneq 
		\mathrm{Ho}(R\modu_{\rm{fg}}^{\Gamma})/\mathrm{Acy}^{\rm{abs}}. 
	\end{align*}
	
The inclusion $R\modu^{\Gamma}_{\rm{fg}} \subset R\modu^{\Gamma}$
induces the functor 
\begin{align}\label{abs:co}
	D^{\rm{abs}}(R\modu_{\rm{fg}}^{\Gamma})
\to	D^{\rm{co}}(R\modu^{\Gamma}) 
	\end{align}
which is fully-faithful 
by~\cite[Section~11, Theorem~1]{Postwo}. 
Also since any coacyclic object is 
acyclic, 
there is a natural functor
\begin{align}\label{funct:coabs}
	D^{\rm{co}}(R\modu^{\Gamma})
	\to D(R\modu^{\Gamma}). 
	\end{align}
In the $\mathbb{Z}$-graded case, we have 
the following lemma which essentially follows from~\cite[Section~3.4, Theorem~1]{Postwo}. 
	\begin{lem}\label{lem:equivDb}
		Suppose that
		$\Gamma=\mathbb{Z}$, 
		 $R^i=0$ for $i\ll 0$ and each $\hH^i(R)$ is finitely 
		 generated over $\hH^0(R)$. 
		Then the functor (\ref{funct:coabs}) restricts to the equivalence
		\begin{align*}
			D^{\rm{abs}}(R\modu_{\rm{fg}}^{\mathbb{Z}}) \stackrel{\sim}{\to}
			D_{\rm{fg}}(R\modu^{\mathbb{Z}}). 
			\end{align*}
		\end{lem}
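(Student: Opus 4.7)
The plan is to invoke \cite[Section~3.4, Theorem~1]{Postwo} after checking its hypotheses translate to our dg setting, and then unpack the comparison into the two required statements: essential surjectivity and fully-faithfulness of the natural functor
\[
D^{\rm{abs}}(R\modu_{\rm{fg}}^{\mathbb{Z}}) \longrightarrow D_{\rm{fg}}(R\modu^{\mathbb{Z}}).
\]
Before addressing these, I would note the functor is well-defined: a finitely generated dg $R$-module $M$ has $\hH^{\ast}(M)$ finitely generated over $\hH^{\ast}(R)$, and the hypotheses on $R$ (bounded below, each $\hH^i(R)$ finitely generated over $\hH^0(R)$, and concentrated in finitely many degrees as a consequence) imply $\hH^{\ast}(R)$ itself is finitely generated over $\hH^0(R)$; hence $\hH^{\ast}(M)$ is finitely generated over $\hH^0(R)$, so $M\in D_{\rm{fg}}(R\modu^{\mathbb{Z}})$.

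For essential surjectivity, I would take $M \in D_{\rm{fg}}(R\modu^{\mathbb{Z}})$ and exploit that its cohomology, being finitely generated over $\hH^0(R)$, is concentrated in a bounded range of cohomological degrees. Starting from the top cohomological degree, I would inductively lift a finite set of generators of $\hH^{\ast}(M)$ to produce a bounded-above, degreewise finitely generated semi-free resolution $P\to M$. Using that $R^i=0$ for $i\ll 0$, one may truncate $P$ below in a sufficiently negative degree to obtain a finitely generated dg $R$-module quasi-isomorphic to $M$. This yields a preimage in $D^{\rm{abs}}(R\modu_{\rm{fg}}^{\mathbb{Z}})$.

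For fully-faithfulness, since both sides are Verdier quotients by subcategories of acyclic objects, it suffices to show that every acyclic object in $R\modu_{\rm{fg}}^{\mathbb{Z}}$ is absolutely acyclic. Given such $M$, I would construct a finite filtration $0=M_0 \subset M_1 \subset \cdots \subset M_n = M$ by finitely generated sub-dg-modules whose successive quotients are finite complexes of finitely generated $\hH^0(R)$-modules; this is made possible precisely because $R$ is bounded below and $\hH^{\ast}(R)$ is finitely generated over $\hH^0(R)$. The acyclicity of $M$, combined with a spectral sequence argument on the filtration, forces each subquotient to admit a presentation as an iterated totalization of short exact sequences of finitely generated dg-modules, hence to be absolutely acyclic. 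The two-out-of-three property for absolute acyclicity under totalizations then propagates this back to $M$.

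The main obstacle is the fully-faithfulness part: one must carry out all filtrations and resolutions within the restrictive subcategory $R\modu_{\rm{fg}}^{\mathbb{Z}}$, without access to infinite coproducts or unbounded projective resolutions. The boundedness hypotheses on $R$ are exactly what makes these finite constructions terminate. This technical work is the content of Positselski's theorem in \cite[Section~3.4]{Postwo}, and the proof essentially consists of verifying that his arguments, formulated there for suitably finite graded dg-rings, apply in our situation with semi-free modules of finite type in place of projectives.
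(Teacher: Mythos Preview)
Your essential surjectivity argument via bounded finitely generated semi-free resolutions is valid and in fact more constructive than the paper's, which simply observes that any object of $D_{\rm{fg}}(R\modu^{\mathbb{Z}})$ is a finite iterated extension of its cohomology modules, each of which manifestly lies in the image.

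The fully-faithfulness argument, however, contains a gap. Your reduction ``since both sides are Verdier quotients by subcategories of acyclic objects, it suffices to show that every acyclic object in $R\modu_{\rm{fg}}^{\mathbb{Z}}$ is absolutely acyclic'' is not correct as stated: the target $D_{\rm{fg}}(R\modu^{\mathbb{Z}})$ is a full subcategory of $\mathrm{Ho}(R\modu^{\mathbb{Z}})/\mathrm{Acy}$, not a Verdier quotient of $\mathrm{Ho}(R\modu_{\rm{fg}}^{\mathbb{Z}})$. Even granting $\mathrm{Acy}^{\rm{abs}} = \mathrm{Acy} \cap \mathrm{Ho}(R\modu_{\rm{fg}}^{\mathbb{Z}})$, you must still verify that the induced functor from $\mathrm{Ho}(R\modu_{\rm{fg}}^{\mathbb{Z}})/\mathrm{Acy}^{\rm{abs}}$ into $\mathrm{Ho}(R\modu^{\mathbb{Z}})/\mathrm{Acy}$ is fully faithful, which is a localization condition that does not come for free. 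This can be repaired using your bounded finitely generated semi-free resolutions from the first part: such a $P$ is K-projective, hence computes Homs identically in both quotients; but this step is missing from your outline. Separately, your filtration and spectral-sequence sketch for ``acyclic finitely generated implies absolutely acyclic'' is too vague to assess: acyclicity of $M$ does not force the individual subquotients of a filtration to be absolutely acyclic, and you have not specified a filtration for which this mechanism works.

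The paper takes a different route that sidesteps both issues. It passes through the intermediate category $\mathrm{Ho}^+(R\modu^{\mathbb{Z}})/\mathrm{Acy}^+$ of bounded-below modules and invokes three parts of \cite[Section~3.4, Theorem~1]{Postwo} separately: part~(a) gives $\mathrm{Acy}^+ = \mathrm{Acy}^{\rm{co},+}$; parts~(b) and~(c) give that both $\mathrm{Ho}^+/\mathrm{Acy}^+ \hookrightarrow D(R\modu^{\mathbb{Z}})$ and $\mathrm{Ho}^+/\mathrm{Acy}^{\rm{co},+} \hookrightarrow D^{\rm{co}}(R\modu^{\mathbb{Z}})$ are fully faithful. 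Since the functor $D^{\rm{abs}}(R\modu_{\rm{fg}}^{\mathbb{Z}}) \to D^{\rm{co}}(R\modu^{\mathbb{Z}})$ is already known to be fully faithful and factors through the bounded-below category (using $R^i=0$ for $i\ll 0$), one chains these embeddings to get fully-faithfulness into $D(R\modu^{\mathbb{Z}})$ without ever analyzing acyclic finitely generated modules directly.
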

	\begin{proof}
		Let $\mathrm{Ho}^+(R\modu^{\mathbb{Z}})$
		be the homotopy category of $\mathbb{Z}$-graded 
		dg $R$-modules $M$
		such that $M^i=0$ for $i\ll 0$. 
		We set 
		\begin{align*}
			\mathrm{Acy}^+ \cneq \mathrm{Acy} \cap 
			\mathrm{Ho}^+(R\modu^{\mathbb{Z}}), \ 
			\mathrm{Acy}^{\rm{co}, +} \cneq \mathrm{Acy}^{\rm{co}} \cap 
			\mathrm{Ho}^+(R\modu^{\mathbb{Z}}). 
			\end{align*}
		Then we have natural fully-faithful functors 
		by~\cite[Section~3.4, Theorem~1 (b), (c)]{Postwo}
		\begin{align}\label{ff:Pos1}
			\mathrm{Ho}^+(R\modu^{\mathbb{Z}})/\mathrm{Acy}^+
			\hookrightarrow D(R\modu^{\mathbb{Z}}), \ 
			\mathrm{Ho}^+(R\modu^{\mathbb{Z}})/\mathrm{Acy}^{\rm{co}, +}
			\hookrightarrow D^{\rm{co}}(R\modu^{\mathbb{Z}}). 
			\end{align}
	By the assumption on $R$, 
	the functor (\ref{abs:co}) factors through the functors
				\begin{align}\label{ff:Pos2}
			D^{\rm{abs}}(R\modu_{\rm{fg}}^{\mathbb{Z}}) \to 
				\mathrm{Ho}^+(R\modu^{\mathbb{Z}})/\mathrm{Acy}^{\rm{co}, +}
				\to D^{\rm{co}}(R\modu^{\mathbb{Z}}). 			
			\end{align}
		The above composition is fully-faithful 
		and the second functor is 
		also fully-faithful since the second 
		functor in (\ref{ff:Pos1})
		is fully-faithful. 
		Therefore the first functor in (\ref{ff:Pos2}) is also fully-faithful. 
		On the other hand, by~\cite[Section~3.4, Theorem~1 (a)]{Postwo}
		we have $\mathrm{Acy}^+=\mathrm{Acy}^{\rm{co}, +}$. 
		Therefore combined with the first 
		functor in (\ref{ff:Pos1}), we obtain the fully-faithful functors
		\begin{align*}
				D^{\rm{abs}}(R\modu_{\rm{fg}}^{\mathbb{Z}}) \hookrightarrow
				\mathrm{Ho}^+(R\modu^{\mathbb{Z}})/\mathrm{Acy}^{\rm{co}, +}
				=
			\mathrm{Ho}^+(R\modu^{\mathbb{Z}})/\mathrm{Acy}^{+}
			\hookrightarrow D(R\modu^{\mathbb{Z}}). 
			\end{align*}
		Any object in the image of the above composition has 
		bounded finitely generated cohomologies by the assumption on $R$, 
		so we obtain the fully-faithful functor 
		\begin{align}\label{ff:Pos3}
			D^{\rm{abs}}(R\modu^{\mathbb{Z}}_{\rm{fg}}) \hookrightarrow 
			D_{\rm{fg}}(R\modu^{\mathbb{Z}}). 
			\end{align}
		It remains to show that the above functor is essentially 
		surjective. 
		Note that any object in the target is given by a finite 
		successive extensions of its cohomologies. 
		Since each cohomology is finitely generated over $\hH^0(R)$, 
		it comes from the 
		objects in the source of (\ref{ff:Pos3}).
		Since (\ref{ff:Pos3}) is fully-faithful, we conclude that
		it is also essentially surjective.  
		\end{proof}
	
	\begin{rmk}\label{rmk:Z2}
		As we will see in Example~\ref{exam:3}, 
		the result of Lemma~\ref{lem:equivDb}
		does not hold in the $\mathbb{Z}/2$-graded case. 		
		\end{rmk}
	\begin{rmk}\label{rmk:indcoh}
		By~\cite[Section~3.11, Theorem~1]{Postwo},
		the coderived category 
		$D^{\rm{co}}(R\modu^{\Gamma})$ 
		is compactly generated by 
		$D^{\rm{abs}}(R\modu^{\Gamma})$.  
		In the situation of Lemma~\ref{lem:equivDb}, 
		it follows that $D^{\rm{co}}(R\modu^{\mathbb{Z}})$
		is equivalent to the category of ind-coherent sheaves on 
		$\Spec R$ studied in~\cite{MR3136100} (see~\cite[Section~H.3]{MR3300415}), 
		and the functor (\ref{funct:coabs}) is a natural functor 
		from ind-coherent sheaves to quasi-coherent sheaves. 
		\end{rmk}

	\subsection{Review of factorization categories}\label{subsec:fact}
	Here we review the theory of factorizations 
associated with super-potentials. The basic references are~\cite{Ornonaff, MR3366002, MR3112502}. 

Let $\xX$ be a noetherian smooth algebraic stack over $\mathbb{C}$, 
$\lL \to \xX$ a line bundle and 
$w \in \Gamma(\xX, \lL^{\otimes 2})$ a global section. 
A \textit{(quasi) coherent factorization} of $w$ consists of 
\begin{align*}
	(\pP, d_{\pP}), \ d_{\pP} \colon \pP \to \pP \otimes \lL
	\end{align*}
where $\pP$ is a (quasi) coherent sheaf on $\xX$ and 
$d_{\pP}$ is a morphism of (quasi) coherent sheaves
satisfying $d_{\pP}^2=w$. 
The category of factorizations of $w$ naturally forms
a dg-category
denoted by $\underline{\MF}_{\star}(\xX, w)_{\rm{dg}}$ for
$\star \in \{\qcoh, \coh\}$, whose 
homotopy category 
$\mathrm{HMF}_{\star}(\xX, w)$
is a triangulated category. 
Let 
\begin{align}\notag
	\mathrm{Acy}_{\star}^{\rm{abs}} \subset \mathrm{HMF}_{\star}(\xX, w)
	\end{align}
be the minimal thick triangulated subcategory 
which contains totalizations of short exact sequences of 
(quasi) coherent factorizations of $w$. 
An object in $\mathrm{Acy}_{\star}^{\rm{abs}}$ is called 
\textit{absolutely acyclic}. 
The triangulated category of factorizations of $w$ is defined by 
the Verdier quotient 
\begin{align*}
	\MF_{\star}(\xX, w) \cneq 
	\mathrm{HMF}_{\star}(\xX, w)/\mathrm{Acy}^{\rm{abs}}_{\star}, \ 
	\star \in \{\qcoh, \coh\}. 
	\end{align*}
It admits a natural dg-enhancement by taking the Drinfeld quotient 
\begin{align}\label{MF:dg}
	\MF_{\star}(\xX, w)_{\rm{dg}} \cneq 
	\underline{\MF}_{\star}(\xX, w)_{\rm{dg}}/\mathrm{Acy}^{\rm{abs}}_{\star, \rm{dg}}, \
	\star \in \{\qcoh, \coh\}
	\end{align}
where $\mathrm{Acy}^{\rm{abs}}_{\star, \rm{dg}}$
is the full dg-subcategory 
of $\underline{\MF}_{\star}(\xX, w)_{\rm{dg}}$ consisting of 
absolutely acyclic objects in the homotopy category. 

Let $j \colon \uU \subset \xX$ be an open substack 
with complement $\zZ \subset \xX$. 
We define 
\begin{align}\label{MF:supp}
	\MF_{\star}(\xX, w)_{\zZ} \cneq 
	\Ker(\MF_{\star}(\xX, w) \stackrel{j^{\ast}}{\to} \MF_{\star}(\uU, w|_{\uU})). 
	\end{align}
Then we have the equivalence (cf.~\cite[Theorem~1.10]{MR3366002})
\begin{align}\label{equiv:supporZ}
	\MF_{\star}(\xX, w)/\MF_{\star}(\xX, w)_{\zZ}
	\stackrel{\sim}{\to} \MF_{\star}(\uU, w|_{\uU}). 
	\end{align}
Let $\Crit(w) \subset \xX$ be the critical locus. We also have the equivalence
(cf.~\cite[Corollary~5.3]{MR3112502})
	\begin{align}\label{Crit:supp}
		\MF_{\star}(\xX, w)_{\Crit(w)} 
		\stackrel{\sim}{\to} \MF_{\star}(\xX, w). 
		\end{align}
	
	We will use the following special construction of factorizations. 
	Let $\vV \to \xX$ be a vector bundle with 
	sections 
	$s \in \Gamma(\xX, \vV\otimes \lL)$, 
	$t \in \Gamma(\xX, \vV^{\vee} \otimes \lL)$
	satisfying 
	$w=\langle s, t \rangle$. 
	Then we have the following factorization of 
	$w$ (called \textit{Koszul factorization})
	\begin{align}\label{Kfact}
		\kK_{s, t} \cneq \left(\bigwedge^{\bullet}\vV^{\vee}, d_{\kK}\right), \ 
		d_{\kK} \colon  \bigwedge^i \vV^{\vee} \stackrel{(s, t)}{\to} \left( \bigwedge^{i-1}\vV^{\vee} \oplus 
		\bigwedge^{i+1}\vV^{\vee}  \right)\otimes \lL. 
		\end{align}
	If $s$ is a regular section 
	so that $\zZ \cneq (s=0) \subset \xX$ has codimension 
	$\rank \vV$, 
	then 
	we have 
	an isomorphism
	in $\mathrm{MF}_{\coh}(\xX, w)$ (see~\cite[Proposition~3.20]{MR3270588})
	\begin{align}\label{Kos:isom}
		\kK_{s, t} \cong (\oO_{\zZ}, d_{\oO_{\zZ}}=0). 
		\end{align}

	We will use the following two special versions of factorization categories. 
	Let $\yY$ be a smooth stack and set $\xX=\yY \times B\mu_2$, 
	and $\lL$ to be the line bundle on $\xX$ induced by the 
	weight one $\mu_2$-character. 
	Then $\lL^{\otimes 2} \cong \oO_{\xX}$, so any 
	regular function $w \colon \yY \to \mathbb{C}$
	determines a global section $w \in \Gamma(\xX, \lL^{\otimes 2})$. 
	We set
	\begin{align*}
		\MF_{\star}^{\mathbb{Z}/2}(\yY, w) \cneq 
		\MF_{\star}(\yY \times B\mu_2, w), \ 
		\star \in \{\qcoh, \coh\}. 
		\end{align*}
	The above triangulated category is $\mathbb{Z}/2$-periodic, i.e. 
	$[2] \cong\id$. 
	When $\yY$ is an affine scheme, the 
	above triangulated category is equivalent to 
	Orlov's triangulated category of matrix factorizations~\cite{Orsin}. 
	
	Let $\C$ acts on a smooth stack $\yY$ and set 
	$\xX=[\yY/\C]$, and $\lL$ to be the line bundle on $\xX$
	induced by the weight one $\C$-character. 
	For a global section $w \in \Gamma(\xX, \lL^{\otimes 2})$, 
	we set
	\begin{align*}
		\MF_{\star}^{\C}(\yY, w) \cneq \MF_{\star}([\yY/\C], w), \ 
		\star \in \{\qcoh, \coh\}. 
		\end{align*}
	For example, 
	we will use the above 
	construction for $\yY=[Y/G]$ for a
	noetherian scheme $Y$ with an action of an algebraic group $G$
	and an action of $\C$ which commutes with the $G$-action, 
	and $w \colon Y \to \mathbb{C}$ is a $G$-invariant 
	function with $\C$-weight two. 
	
	For a closed substack $\zZ \subset \yY$, 
	we define the subcategories 
	\begin{align*}
	\MF_{\star}^{\mathbb{Z}/2}(\yY, w)_{\zZ} \subset \MF_{\star}^{\mathbb{Z}/2}(\yY, w), \ 
		\MF_{\star}^{\C}(\yY, w)_{\zZ} \subset \MF_{\star}^{\C}(\yY, w)
		\end{align*}
	in the similar way as (\ref{MF:supp}). Here we assume that $\zZ$ is $\C$-invariant in the latter case. 
	The dg-categories 
	$\MF^{\ast}_{\star}(\yY, w)_{\rm{dg}}$, 
	$\MF^{\ast}_{\star}(\yY, w)_{\zZ, \rm{dg}}$ are 
	also defined in the similar way as (\ref{MF:dg}).

	\subsection{$\C$-equivariant Koszul duality}
	Let $Y$ be a smooth affine $\mathbb{C}$-scheme
	and $V \to Y$ a vector bundle on it. 
	Given a section $s \colon Y \to V$ of $V$, 
	its derived zero locus $\fU$ is given by 
	\begin{align}\label{frak:U}
		\fU =\Spec \rR(V \to Y, s)
		\end{align}
	where $\rR(V \to Y, s)$ is the Koszul complex 
	\begin{align*}
	\rR(V \to Y, s) \cneq 
	\left(  
	\cdots \to \bigwedge^2 V^{\vee} \stackrel{s}{\to} V^{\vee} \stackrel{s}{\to}
	\oO_Y \to 0 \right).	
		\end{align*}
	Its dg or triangulated 
	derived categories of coherent sheaves are defined by 
	\begin{align*}
		\Dbc(\fU)_{\rm{dg}} \cneq D_{\rm{fg}}(\oO_{\fU}\modu^{\mathbb{Z}})_{\rm{dg}}, \ 
		\Dbc(\fU) \cneq D_{\rm{fg}}(\oO_{\fU}\modu^{\mathbb{Z}}). 
		\end{align*}
	The ind-completion
	$\Ind \Dbc(\fU)$ of $\Dbc(\fU)$
	is defined to be the homotopy category of 
	$\Ind \Dbc(\fU)_{\rm{dg}}$ (see~Subsection~\ref{subsec:notation}), which 
	is equivalent to 
	$D^{\rm{co}}(\oO_{\fU}\modu^{\mathbb{Z}})$
	(see~Remark~\ref{rmk:indcoh}). 
	
		Let $V^{\vee} \to Y$ be the total space of the dual vector bundle of $V$. 
	There is an associated function on $V^{\vee}$, given by 
	\begin{align}\label{func:w}
		w \colon V^{\vee} \to \mathbb{C}, \ 
		w(x, v)=\langle s(x), v \rangle, \ 
		x \in Y, \ v \in V^{\vee}|_{x}. 
		\end{align}
	It is well-known that the critical locus of the above 
	function is the classical truncation of the $(-1)$-shifted cotangent 
	over $\fU$ (see~\cite{MR3607000}), 
	\begin{align*}
		t_0(\Omega_{\fU}[-1])=\Crit(w) \subset V^{\vee}. 
		\end{align*}
		Let $\C$ acts on the fibers of $V^{\vee} \to Y$ by weight two, so that $w$ is 
		of weight two. 
	We have the following Koszul duality equivalence 
	which relates derived category coherent sheaves on $\fU$ with
	the triangulated category of 
	$\C$-equivariant factorizations of $w$
	(see~\cite[Theorem~2.3.3, Lemma~2.3.10]{TocatDT}):
	
	\begin{thm}\emph{(cf.~\cite{MR3071664, MR2982435, MR3631231, TocatDT})}
		\label{thm:knoer}
		There is an equivalence of triangulated categories
		\begin{align}\label{equiv:Psi}
			\Psi \colon \MF_{\coh}^{\C}(V^{\vee}, w)
			\stackrel{\sim}{\to} \Dbc(\fU),
			\end{align}
		which extends to the equivalence 
		\begin{align}\label{ind:Psi}
			\Psi \colon \MF_{\qcoh}^{\C}(V^{\vee}, w) \stackrel{\sim}{\to}
			\Ind \Dbc(\fU). 
			\end{align}
	\end{thm}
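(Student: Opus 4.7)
The plan is to follow the strategy developed in the cited works~\cite{MR3071664, MR2982435, MR3631231, TocatDT}, realizing $\Psi$ as a Fourier--Mukai-type transform whose kernel is a Koszul factorization. Since $Y$ is affine, we may write $Y = \Spec A$, and regard $(V^{\vee}, w)$ as encoding a curved structure on the symmetric algebra $S_A(V)$, whose linear generators carry $\C$-weight two. On the other side, the dg-algebra $\oO_{\fU}$ is the Koszul complex with $V^{\vee}$ placed in cohomological degree $-1$ and differential given by contraction with $s$. The functor $\Psi$ sends a $\C$-equivariant curved $S_A(V)$-module to a dg $\oO_{\fU}$-module via a standard Koszul twisting construction, equivalently realized as an integral transform against the universal Koszul factorization on $V^{\vee} \times_Y \fU$ built from the tautological section.

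The main steps would be: (i) check that $\Psi$ is a well-defined exact dg-functor, sending factorizations (resp.\ absolutely acyclic factorizations) to dg-modules (resp.\ acyclic dg-modules); (ii) establish full faithfulness by computing the derived endomorphism algebras of a distinguished generator. On the factorization side one uses the Koszul factorization $\kK_{s, t^{\mathrm{univ}}}$, which by (\ref{Kos:isom}) represents the structure sheaf of the zero section $Y \hookrightarrow V^{\vee}$ and maps under $\Psi$ to $\oO_{\fU}$; the two endomorphism dg-algebras then agree after identifying the $\C$-weight on $V^{\vee}$ with twice the cohomological grading on $\oO_{\fU}$; (iii) establish essential surjectivity by verifying that $\Psi$ sends a set of compact generators of $\MF_{\coh}^{\C}(V^{\vee}, w)$ to a set of compact generators of $\Dbc(\fU)$.

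To extend to the quasi-coherent version (\ref{ind:Psi}), I would observe that both $\MF_{\qcoh}^{\C}(V^{\vee}, w)$ and $\Ind \Dbc(\fU) \simeq D^{\co}(\oO_{\fU} \modu^{\mathbb{Z}})$ (cf.\ Remark~\ref{rmk:indcoh}) are compactly generated by their respective coherent/bounded subcategories. Therefore the equivalence (\ref{equiv:Psi}) of the compact subcategories promotes canonically to the desired ind-completed equivalence by passing to ind-completions on dg-enhancements.

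The principal obstacle lies in the grading bookkeeping: the $\C$-weight on $V^{\vee}$ must correspond to \emph{twice} the cohomological degree on $\oO_{\fU}$, and this doubling is precisely what allows the $\C$-equivariant matrix factorization category to be genuinely $\mathbb{Z}$-graded (rather than merely $\mathbb{Z}/2$-periodic), matching the $\mathbb{Z}$-grading on $\Dbc(\fU)$. The weight-two hypothesis on $w$ is essential for this matching. Since this detailed verification has already been carried out in~\cite[Theorem~2.3.3, Lemma~2.3.10]{TocatDT} in the present generality, the proof of Theorem~\ref{thm:knoer} reduces to invoking those results directly.
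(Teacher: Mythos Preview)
Your proposal is correct and aligns with the paper's treatment: the theorem is quoted from the cited references without proof, and the paper only records the construction of $\Psi$ via the Koszul factorization $\kK_s$ as $\Psi(-)=\RHom(\kK_s,-)$ with quasi-inverse $\Phi(-)=\kK_s\otimes_{\oO_{\fU}}(-)$, which is exactly the Fourier--Mukai kernel you describe. Your outline of full faithfulness via endomorphism algebras, essential surjectivity via generators, and the ind-completion argument for (\ref{ind:Psi}) matches the strategy of~\cite[Theorem~2.3.3, Lemma~2.3.10]{TocatDT}, so your final reduction to those results is precisely what the paper does.
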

The equivalence (\ref{equiv:Psi}) is constructed in the following way. 
Let $\kK_s$ be the following $\C$-equivariant
factorization of $w$
\begin{align}\notag
	\kK_s \cneq 
	\left(
	\oO_{V^{\vee}} \otimes_{\oO_Y} \oO_{\mathfrak{U}}, 
	d_{\kK_s} \right). 
\end{align}
Here 
the $\C$-action is given by the grading  
\begin{align*}
	\oO_{V^{\vee}} \otimes_{\oO_Y}\oO_{\mathfrak{U}}=
	S_{\oO_Y}(V[-2]) \otimes_{\oO_Y} S_{\oO_Y}(V^{\vee}[1]),
\end{align*}
and the weight one map $d_{\kK_s}$ is given by 
\begin{align}\label{diff:ds}
	d_{\kK_s}=
	1 \otimes d_{\oO_{\mathfrak{U}}}+\eta \colon \oO_{V^{\vee}} \otimes_{\oO_Y}\oO_{\mathfrak{U}} \to
	\oO_{V^{\vee}} \otimes_{\oO_Y}\oO_{\mathfrak{U}} \langle 1 \rangle,
\end{align}
where 
$\eta \in V\otimes_{\oO_Y} V^{\vee} \subset \oO_{V^{\vee}} \otimes_{\oO_Y}
\oO_{\fU}$
corresponds to 
$\id \in \Hom(V, V)$, and $\langle 1 \rangle$ indicates the shift of $\C$-weight by one. 
Then one can check that there is a quasi-isomorphism of dg-algebras
\begin{align*}
	\oO_{\fU} \stackrel{\sim}{\to}
	\Hom_{\MF_{\coh}^{\C}(V^{\vee}, w)_{\rm{dg}}}(\kK_s, \kK_s), 
	\end{align*}
and the equivalence $\Psi$ is given by (see the proof of~\cite[Theorem~2.3.3]{TocatDT})
\begin{align}\label{equiv:Psi:K}
	\Psi(-)=\RHom(\kK_s, -), \ 
	\MF_{\coh}^{\C}(V^{\vee}, w)
	\to \Dbc(\fU). 
	\end{align}
Its quasi-inverse is given by 
\begin{align}\label{equiv:Phi}
		\Phi(-)=\kK_s \otimes_{\oO_{\fU}} (-), \ 
	\Dbc(\fU) \to \MF_{\coh}^{\C}(V^{\vee}, w). 
	\end{align}

\subsection{Singular supports of (ind) coherent sheaves}
The theory of singular supports of 
coherent sheaves on $\mathfrak{U}$
is developed in~\cite{MR3300415}
following the earlier work~\cite{MR2489634}. 
Here we recall its definition. 
Let $\mathrm{HH}^{\ast}(\mathfrak{U})$ be the 
Hochschild cohomology
\begin{align*}
	\mathrm{HH}^{\ast}(\mathfrak{U})
	\cneq \Hom_{\mathfrak{U} \times \mathfrak{U}}^{\ast}
	(\Delta_{\ast}\oO_{\mathfrak{U}}, \Delta_{\ast}\oO_{\mathfrak{U}}).
\end{align*}
Here $\Delta \colon \mathfrak{U} \to \mathfrak{U} \times \mathfrak{U}$ is the diagonal. 
Then it is shown in~\cite[Section~4]{MR3300415}
that there 
exists a
canonical map 
$\hH^1(\mathbb{T}_{\mathfrak{U}}) \to \mathrm{HH}^2(\mathfrak{U})$, 
so the map of 
graded rings
\begin{align}\notag
	\oO_{\Crit(w)}=
	S(\hH^1(\mathbb{T}_{\mathfrak{U}})) \to \mathrm{HH}^{2\ast}(\mathfrak{U}) 
	\to \mathrm{Nat}_{D^b_{\rm{coh}}(\mathfrak{U})}(\id, \id[2\ast]). 
\end{align}
Here $\mathrm{Nat}_{D^b_{\rm{coh}}(\mathfrak{U})}(\id, \id[2\ast])$
is the group of natural transformations from $\id$ to $\id[2\ast]$
on $D^b_{\rm{coh}}(\mathfrak{U})$, and the right arrow is defined 
by taking Fourier-Mukai transforms associated with 
morphisms $\Delta_{\ast}\oO_{\mathfrak{U}} \to \Delta_{\ast}\oO_{\mathfrak{U}}[2\ast]$. 
The above maps induce the map for 
each $\fF \in D^b_{\rm{coh}}(\mathfrak{U})$, 
\begin{align}\notag
	\oO_{\mathrm{Crit}(w)}
	\to \Hom^{2\ast}(\fF, \fF).
\end{align}
The above map 
defines the 
$\mathbb{C}^{\ast}$-equivariant 
$\oO_{\mathrm{Crit}(w)}$-module 
structure on $\Hom^{2\ast}(\fF, \fF)$, 
which is finitely generated  by~\cite[Theorem~4.1.8]{MR3300415}. 
Below a closed subset $Z \subset \mathrm{Crit}(w)$ is called
\textit{conical} if it is closed under the fiberwise 
$\mathbb{C}^{\ast}$-action on $\mathrm{Crit}(w)$. 
	For $\fF \in D^b_{\rm{coh}}(\mathfrak{U})$, its singular
	support is the conical closed subset
	\begin{align*}
		\mathrm{Supp}^{\rm{sg}}(\fF) \subset \mathrm{Crit}(w)
	\end{align*}
	defined to be the support of $\Hom^{2\ast}(\fF, \fF)$
	as $\oO_{\mathrm{Crit}(w)}$-module. 

For a conical closed subset $Z \subset \Crit(w)$, let 
\begin{align*}
	\cC_{Z} \subset \Dbc(\fU), \ 
	\Ind \cC_{Z} \subset \Ind \Dbc(\fU)
	\end{align*}
be the triangulated subcategory consisting of objects
whose singular supports
are contained in $Z$, and its ind-completion respectively. 
Their dg-enhancements  
\begin{align*}
	\cC_{Z, \rm{dg}} \subset \Dbc(\fU)_{\rm{dg}}, \ 
	\Ind \cC_{Z, \rm{dg}} \subset \Ind \Dbc(\fU)_{\rm{dg}}
	\end{align*}
are also defined to be consisting of objects with singular 
supports contained in $Z$. 
\begin{prop}\label{prop:ssupp}\emph{(\cite[Proposition~2.3.9]{TocatDT})}
The equivalence $\Psi$ in Theorem~\ref{thm:knoer} 
restricts to the equivalences
\begin{align}\label{Psi:supp}
	\Psi \colon \MF^{\C}_{\coh}(V^{\vee}, w)_{Z} \stackrel{\sim}{\to} \cC_{Z}, \ 
	\Psi \colon \MF_{\qcoh}^{\C}(V^{\vee}, w)_{Z} \stackrel{\sim}{\to} \Ind \cC_{Z}. 
	\end{align}
In particular
by (\ref{equiv:supporZ}), the equivalences in Theorem~\ref{thm:knoer}
descend to the equivalences 
\begin{align}\label{Psi:supp2}
\Psi \colon \MF^{\C}_{\coh}(V^{\vee} \setminus Z, w)
\stackrel{\sim}{\to}
\Dbc(\fU)/\cC_{Z}, \ 
\Psi \colon \MF^{\C}_{\qcoh}(V^{\vee} \setminus Z, w)
\stackrel{\sim}{\to}
\Ind\Dbc(\fU)/\Ind\cC_{Z}. 
\end{align}
\end{prop}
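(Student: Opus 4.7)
The plan is to establish the restricted equivalences in (\ref{Psi:supp}) by matching the natural $\oO_{\Crit(w)}$-action coming from singular support theory on the $\Dbc(\fU)$-side with the tautological action of functions on $V^{\vee}$ on the matrix factorization side. Once this is done, (\ref{Psi:supp2}) is immediate from the localization sequence (\ref{equiv:supporZ}) for factorizations applied to the conical open complement $V^{\vee} \setminus Z$, combined with the Verdier quotient description of $\Dbc(\fU)/\cC_Z$.

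First I would exhibit compatibility of $\Psi$ with restriction to $\C$-invariant open subsets. For the open immersion $j \colon \uU \cneq V^{\vee} \setminus Z \hookrightarrow V^{\vee}$ with $Z$ conical, the pullback $j^{\ast} \kK_s$ is again a Koszul factorization of the same form, with endomorphism dg-algebra quasi-isomorphic to the pullback of $\oO_{\fU}$ along the localization. Using the explicit formula $\Psi(-)=\RHom(\kK_s, -)$ from (\ref{equiv:Psi:K}), this yields a commutative square intertwining $j^{\ast}$ on the factorization side with a localization functor on the coherent side. In particular $\MF^{\C}_{\coh}(V^{\vee}, w)_Z$ maps under $\Psi$ into the kernel of this localization.

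Second, I would identify this kernel with $\cC_{Z}$. The key point is that the map $\oO_{\Crit(w)} \to \Nat_{\Dbc(\fU)}(\id, \id[2\ast])$ used to define singular supports corresponds, under $\Psi$, to the obvious action of $\oO_{V^{\vee}}$ on $\MF^{\C}_{\coh}(V^{\vee}, w)$ by multiplication (which factors through $\oO_{\Crit(w)} = \oO_{V^{\vee}}/(\partial w)$ in cohomology, since multiplication by $\partial_i w$ is null-homotopic on any factorization of $w$). Under the generator $\kK_s$ this is a direct computation: the class in $\hH^1(\mathbb{T}_{\fU}) \subset \mathrm{HH}^2(\fU)$ assembling into the map $\oO_{\Crit(w)} \to \mathrm{HH}^{2\ast}(\fU)$ is precisely the obstruction class extracted from the Koszul differential (\ref{diff:ds}), so the induced Fourier--Mukai transform on $\Psi(\pP)$ coincides with multiplication by the corresponding function on $\pP$. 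Consequently the $\oO_{\Crit(w)}$-module $\Hom^{2\ast}(\Psi(\pP), \Psi(\pP))$ is supported in $Z$ if and only if $\Hom^{\ast}_{\MF}(\pP, \pP)$ vanishes after inverting functions not vanishing on $Z$, if and only if $j^{\ast}\pP=0$, i.e. $\pP \in \MF^{\C}_{\coh}(V^{\vee}, w)_Z$. The ind-completed statement is obtained by passing to ind-completions of both sides, using that $\cC_Z$ and $\MF^{\C}_{\coh}(V^{\vee}, w)_Z$ are generated by their compact objects under the equivalence.

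The main obstacle is precisely the Hochschild-theoretic identification in the second step: tracing that the Fourier--Mukai transform associated to the natural class $f \in \oO_{\Crit(w)} \hookrightarrow \mathrm{HH}^{2\ast}(\fU)$ corresponds under $\Psi$ to scalar multiplication by a lift of $f$ to $\oO_{V^{\vee}}$ on matrix factorizations. Once this compatibility is in place, the descent equivalences (\ref{Psi:supp2}) follow formally by comparing the localization sequence (\ref{equiv:supporZ}) for $\MF^{\C}_{\star}$ with the defining sequence for the Verdier quotient $\Dbc(\fU)/\cC_Z$ and applying the equivalences in (\ref{Psi:supp}).
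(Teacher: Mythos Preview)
The paper does not prove this proposition: it is stated with an attribution to \cite[Proposition~2.3.9]{TocatDT} and no proof is given here, so there is nothing in the present paper to compare your argument against directly.

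That said, your strategy in step~2 is the standard one and is essentially how the cited reference proceeds: one identifies the $\oO_{\Crit(w)}$-action defining singular supports on $\Dbc(\fU)$ with the tautological $\oO_{V^{\vee}}$-action on matrix factorizations via the explicit Koszul kernel $\kK_s$, so that $\mathrm{Supp}^{\rm{sg}}(\Psi(\pP))$ coincides with the usual support of $\pP$ inside $\Crit(w)$. This is the correct mechanism, and once established the quotient statements (\ref{Psi:supp2}) follow formally from (\ref{equiv:supporZ}).

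Your step~1, however, is confused as written. You speak of ``the pullback of $\oO_{\fU}$ along the localization'' and a ``localization functor on the coherent side,'' but for a general conical closed $Z \subset \Crit(w) \subset V^{\vee}$ there is no corresponding open of $\fU$ (or of $Y$) to which one is localizing; the derived scheme $\fU$ does not change. The point is precisely that the singular-support quotient $\Dbc(\fU)/\cC_Z$ is \emph{not} the derived category of any open in $\fU$, and $\MF^{\C}_{\coh}(V^{\vee}\setminus Z,w)$ provides its description. So step~1 cannot be carried out as you describe, and the argument should rest entirely on the compatibility of actions in step~2; the passage to the ind-completed statement then follows as you indicate.
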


\section{$\mathbb{Z}/2$-periodic Koszul duality}
In this section, we observe that a $\mathbb{Z}/2$-periodic analogue of Theorem~\ref{thm:knoer}
does not hold for the usual $\mathbb{Z}/2$-periodic 
derived categories, and need to use co (absolute) derived categories. 
We also use singular supports quotients to give another version of 
$\mathbb{Z}/2$-periodic analogue of Theorem~\ref{thm:knoer}. 

	\subsection{A failure of $\mathbb{Z}/2$-periodic Koszul duality}
	We will give a $\mathbb{Z}/2$-periodic version of 
	the Koszul duality equivalence in Theorem~\ref{thm:knoer}.
	Namely we would like to replace the $\C$-equivariant 
	factorization category $\MF_{\coh}^{\C}(V^{\vee}, w)$
	with the $\mathbb{Z}/2$-periodic 
	factorization category $\MF_{\coh}^{\mathbb{Z}/2}(V^{\vee}, w)$. 
	A naive guess is to replace 
$\Dbc(\fU)$ with the $\mathbb{Z}/2$-periodic derived category
\begin{align*}
	D^{\mathbb{Z}/2}_{\coh}(\fU) \cneq 
	D_{\rm{fg}}(\oO_{\fU}\modu^{\mathbb{Z}/2}). 
	\end{align*}
Let $\kK_s^{\mathbb{Z}/2}$ be the factorization $\kK_s$ of $w$
regarded as a $\mathbb{Z}/2$-graded factorization, i.e. 
\begin{align*}
	\kK_s^{\mathbb{Z}/2}=\kK_s^{\rm{even}} \oplus 
	\kK_s^{\rm{odd}}, \ 
	\kK_s^{\rm{even}}=\oO_{V^{\vee}} \otimes_{\oO_Y}
	\wedge^{\rm{even}}V^{\vee}, \ 
	\kK_s^{\rm{odd}}=\oO_{V^{\vee}} \otimes_{\oO_Y}\wedge^{\rm{odd}}V^{\vee},
	\end{align*}
with differential given by (\ref{diff:ds}). 
As an analogy of (\ref{equiv:Psi:K}), we can define the functor 
\begin{align}\label{Psi:half}
	\Psi^{\mathbb{Z}/2} \colon 
	\MF_{\coh}^{\mathbb{Z}/2}(V^{\vee}, w) \to 
	D_{\coh}^{\mathbb{Z}/2}(\fU), \ 
	(-) \mapsto 
	\RHom(\kK_s^{\mathbb{Z}/2}, -). 
	\end{align}
We see that the above functor does not necessary give an equivalence. 
	 \begin{exam}\label{exam:1}
	 	Let $Y=\Spec \mathbb{C}$, 
	 	$V=\mathbb{A}^1$, $s=0$ so that 
	 	$\fU=\bullet_{\epsilon} \cneq \Spec \mathbb{C}[\epsilon]$ 
	 	with $\deg(\epsilon)=-1$. 
	 	In this case, the Koszul factorization is 
	 	$\kK_s=\mathbb{C}[\epsilon, t]$ where 
	 	$\deg(t)=2$ with differential 
	 	given by the multiplication $\epsilon t$. 
	 	The functor (\ref{Psi:half}) is 
	 	\begin{align}\label{funct:A1}
	 		\Psi^{\mathbb{Z}/2} \colon \MF_{\coh}^{\mathbb{Z}/2}(\mathbb{A}^1, 0)
	 		\to D_{\coh}^{\mathbb{Z}/2}(\bullet_{\epsilon}), \ 
	 		(-) \mapsto  \RHom(\kK_s^{\mathbb{Z}/2}, -). 
	 		\end{align}
 		Then for $y \in \mathbb{A}^1$ we have 
 		\begin{align}\label{Psiy}
 			\Psi^{\mathbb{Z}/2}(\oO_y) =
 			\left( \mathbb{C} \oplus \mathbb{C} \epsilon^{\vee}, 
 			d=y\epsilon \right)
 			 \cong \begin{cases}
 				(\mathbb{C}[\epsilon])[1], & y=0, \\
 				0, & y \neq 0. 
 				\end{cases}
 			\end{align}
 		In particular 
 		$\Psi^{\mathbb{Z}/2}$ is not fully-faithful. 
 		
 			Geometrically this issue
 		occurs since the factorization $\kK_s^{\mathbb{Z}/2}$ 
 		is supported at the 
 		zero section of $V^{\vee} \to Y$ 
 		so that the functor (\ref{funct:A1}) annihilates objects whose 
 		supports are away from the zero section. 	
 		We note that the 
 		above issue does not happen for $\C$-equivariant case (as indicated 
 		by Theorem~\ref{thm:knoer}), 
 		since the support of any $\C$-invariant coherent sheaf of $V^{\vee}$ 
 		intersects with the zero section (so the objects $\oO_y$ for $y\neq 0$
 		are not allowed in the $\C$-equivariant case). 
 		\end{exam}
 	
 	\begin{exam}\label{exam:2}
 		In Example~\ref{exam:1}, the 
 		functor (\ref{funct:A1}) also 
 		sends $\oO_{\mathbb{A}^1}$ to $\oO_0 \cneq \mathbb{C}[\epsilon]/(\epsilon)$. 
 	We have the isomorphisms of $\mathbb{Z}/2$-graded vector spaces
 		\begin{align}\label{compute:e}
 			\Hom^{\ast}_{\MF_{\coh}^{\mathbb{Z}/2}(\mathbb{A}^1, 0)}(\oO_{\mathbb{A}^1}, \oO_{\mathbb{A}^1})
 			=\mathbb{C}[t], \ 
 				\Hom^{\ast}_{D_{\coh}^{\mathbb{Z}/2}(\bullet_{\epsilon})}(\oO_0, \oO_0)=
 				\mathbb{C}\lkakko t \rkakko
 			\end{align}
 		where $t$ is of even degree. 
 		The right hand side 
 		is computed by taking the projective resolution 
 		\begin{align}\notag
 		\cdots 	\to \mathbb{C}[\epsilon] \stackrel{\epsilon}{\to}
 			\mathbb{C}[\epsilon] \to \oO_0 \to 0.  			
 			\end{align}
 		As the above resolution exhibits $\oO_0$ as a colimit of 
 		$(\cdots 	\to \mathbb{C}[\epsilon] \stackrel{\epsilon}{\to}
 		\mathbb{C}[\epsilon])$, 
 		the $\Hom$ space 
 		in the right hand side of (\ref{compute:e}) is computed by the  
 		limit of $\mathbb{C} \stackrel{0}{\to} \mathbb{C} \stackrel{0}{\to} \cdots$
 		in the category of $\mathbb{Z}/2$-graded vector spaces, 
 		that is $\mathbb{C}\lkakko t \rkakko$. 
 		The above computation also implies that 
 		$\Psi^{\mathbb{Z}/2}$ is not fully-faithful. 
 		
 		In the $\C$-equivariant case,
 		we compute 
 		$\Hom^{\ast}_{D^{b}_{\coh}(\bullet_{\epsilon})}(\oO_0, \oO_0)$
 		as a limit of 
 		$\mathbb{C} \stackrel{0}{\to} \mathbb{C}[-2] \stackrel{0}{\to} \cdots$
 		in the category $\mathbb{Z}$-graded vector spaces, 
 		which is $\mathbb{C}[t]$ where $t$ is of degree two. 
 		Therefore we have 
 		\begin{align*}
 			\Hom^{\ast}_{\MF_{\coh}^{\C}(\mathbb{A}^1, 0)}(\oO_{\mathbb{A}^1}, \oO_{\mathbb{A}^1})
 			=\Hom^{\ast}_{D^{b}_{\coh}(\bullet_{\epsilon})}(\oO_0, \oO_0)
 			=\mathbb{C}[t].
 			\end{align*} 
 		\end{exam}
 	
 	\subsection{$\mathbb{Z}/2$-periodic Koszul duality via coderived categories}
 	A failure of the functor (\ref{Psi:half}) to be an equivalence 
 	is caused by the incorrect definition 
 	of the target category. 
 	Indeed in the $\mathbb{Z}/2$-periodic case, 
 	an analogue of Lemma~\ref{lem:equivDb} does not hold, 
 	so we need to distinguish two kinds of derived 
 	categories. 
 	Technically, a $\mathbb{Z}/2$-periodic analogue of the functor (\ref{equiv:Phi}) 
 	is not well-defined from $D^{\mathbb{Z}/2}_{\coh}(\fU)$, but well-defined from 
 	coderived category or absolute derived category. 
 	As we see below, the latter derived categories
 	provide a correct formulation of $\mathbb{Z}/2$-periodic 
 	Koszul duality. 
 	 	
 	\begin{prop}\label{prop:Z2dual}
 		In the setting of Theorem~\ref{thm:knoer}, 
 		there is an equivalence 
 		\begin{align}\label{equiv:Z21}
 			\Psi^{\mathbb{Z}/2} \colon \MF^{\mathbb{Z}/2}_{\qcoh}(V^{\vee}, w)
 			\stackrel{\sim}{\to} D^{\rm{co}}(\oO_{\fU}\modu^{\mathbb{Z}/2}). 
 			\end{align} 		
 		\end{prop}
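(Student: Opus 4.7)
The plan is to construct the equivalence by adapting the proof of Theorem~\ref{thm:knoer} using the $\mathbb{Z}/2$-folded Koszul factorization $\kK_s^{\mathbb{Z}/2}$, with the left-hand side playing the role of compact generators and the coderived category on the right providing the appropriate ambient category. First, I would define $\Psi^{\mathbb{Z}/2}(-) = \RHom(\kK_s^{\mathbb{Z}/2}, -)$ and a candidate quasi-inverse $\Phi^{\mathbb{Z}/2}(-) = \kK_s^{\mathbb{Z}/2} \otimes_{\oO_{\fU}}^{\dL}(-)$. The key point explaining Example~\ref{exam:1} and Example~\ref{exam:2} is that $\Phi^{\mathbb{Z}/2}$ is not well-defined on the usual $\mathbb{Z}/2$-periodic derived category (it does not preserve acyclic objects that are not coacyclic), whereas it does descend from $D^{\rm{co}}(\oO_{\fU}\modu^{\mathbb{Z}/2})$ since tensoring with the semifree resolution $\kK_s^{\mathbb{Z}/2}$ sends coacyclic modules to absolutely acyclic factorizations.

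The second step is the endomorphism computation. Forgetting the $\C$-equivariant structure to a $\mathbb{Z}/2$-grading, the quasi-isomorphism
\begin{align*}
\oO_{\fU} \stackrel{\sim}{\to} \End_{\MF_{\coh}^{\C}(V^{\vee}, w)_{\rm{dg}}}(\kK_s)
\end{align*}
recorded in the proof of Theorem~\ref{thm:knoer} descends to a quasi-isomorphism of $\mathbb{Z}/2$-graded dg-algebras between $\oO_{\fU}$ (with its folded $\mathbb{Z}/2$-grading) and $\End(\kK_s^{\mathbb{Z}/2})$. This identifies both the endomorphism algebra and the bimodule structure needed for the Koszul duality.

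Third, I would establish that $\kK_s^{\mathbb{Z}/2}$ is a compact generator of $\MF^{\mathbb{Z}/2}_{\qcoh}(V^{\vee}, w)$. Compactness follows because $\kK_s^{\mathbb{Z}/2}$ is a bounded complex of locally free sheaves and hence $\Hom$ out of it commutes with arbitrary direct sums in the factorization category. For generation, I would either use the local structure near the zero section of $V^{\vee} \to Y$ (any quasi-coherent factorization has $\kK_s^{\mathbb{Z}/2}$-cohomology supported on the derived zero locus, and this cohomology vanishing forces the factorization to be absolutely acyclic) or directly import the analogous statement from the $\C$-equivariant setting (\ref{ind:Psi}) and verify that the forgetful/folding procedure preserves compact generation.

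Fourth, combining these ingredients with Positselski's Morita-type theorem for coderived categories (cf.~\cite{Postwo}), the pair of adjoint functors $(\Phi^{\mathbb{Z}/2}, \Psi^{\mathbb{Z}/2})$ gives the claimed equivalence: $\Psi^{\mathbb{Z}/2}$ sends the compact generator $\kK_s^{\mathbb{Z}/2}$ to $\oO_{\fU}$, which is a compact generator of $D^{\rm{co}}(\oO_{\fU}\modu^{\mathbb{Z}/2})$ by~\cite[Section~3.11, Theorem~1]{Postwo}, and the endomorphism quasi-isomorphism from the second step promotes this to a full equivalence of compactly generated dg-categories. The main obstacle I anticipate is step three: verifying compact generation in the $\mathbb{Z}/2$-periodic quasi-coherent factorization category without the $\C$-equivariance, since the generation arguments available in the $\C$-equivariant case (where one can reduce to the zero section via weight decompositions) do not transport directly to the $\mathbb{Z}/2$-periodic setting. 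This likely requires either a careful filtration/devissage argument or invoking Positselski's general machinery to identify $\MF^{\mathbb{Z}/2}_{\qcoh}(V^{\vee},w)$ intrinsically as a coderived category of factorizations.
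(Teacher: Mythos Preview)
Your overall setup—defining $\Psi^{\mathbb{Z}/2}$ and $\Phi^{\mathbb{Z}/2}$ via $\kK_s^{\mathbb{Z}/2}$ and checking they descend to the coderived category—matches the paper. However, the Morita-style argument in steps three and four has a genuine gap: the claim that $\kK_s^{\mathbb{Z}/2}$ compactly generates $\MF^{\mathbb{Z}/2}_{\qcoh}(V^{\vee},w)$ is not merely hard to verify, it is \emph{false}. Example~\ref{exam:1} already exhibits this: there $\kK_s^{\mathbb{Z}/2}$ is isomorphic to the skyscraper $\oO_0$ in $\MF^{\mathbb{Z}/2}_{\coh}(\mathbb{A}^1,0)$, and $\Hom^{\ast}(\kK_s^{\mathbb{Z}/2},\oO_y)=0$ for $y\neq 0$ while $\oO_y\neq 0$. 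Dually, $\oO_{\fU}$ is \emph{not} a compact generator of $D^{\rm co}(\oO_{\fU}\modu^{\mathbb{Z}/2})$: it is right-orthogonal to every acyclic module, yet the whole point of the coderived category is that acyclic-but-not-coacyclic objects (such as $\Psi^{\mathbb{Z}/2}(\oO_y)$ in Example~\ref{exam:3}) are nonzero there. The reference~\cite[Section~3.11, Theorem~1]{Postwo} only gives compact generation by \emph{all} of $D^{\rm abs}(\oO_{\fU}\modu_{\rm fg}^{\mathbb{Z}/2})$, not by $\oO_{\fU}$ alone.

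The paper avoids this obstruction entirely by abandoning the single-generator Morita argument and instead proving directly that $\Phi^{\mathbb{Z}/2}\circ\Psi^{\mathbb{Z}/2}\cong\id$ and $\Psi^{\mathbb{Z}/2}\circ\Phi^{\mathbb{Z}/2}\cong\id$ via their Fourier--Mukai kernels. For the first composition, the kernel $\kK_s^{\mathbb{Z}/2}\otimes_{\oO_{\fU}}\kK_s^{\mathbb{Z}/2\vee}$ on $V^{\vee}\times_Y V^{\vee}$ is identified as a Koszul factorization and shown to be isomorphic to $\oO_{\Delta}$ using (\ref{Kos:isom}). For the second, the kernel $\kK_s^{\mathbb{Z}/2\vee}\otimes_{\oO_{V^{\vee}}}\kK_s^{\mathbb{Z}/2}$ on $\fU\times_Y\fU$ is compared to $\Delta_{\ast}\oO_{\fU}$ by passing through an auxiliary equivalence $\fU\times_Y\fU\simeq\fU^{\flat}\times_Y\fU$ and exhibiting an explicit bounded-below resolution whose totalization is coacyclic by~\cite[Section~3.4, Theorem~1(a)]{Postwo}. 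Neither step requires any generation statement; the argument works object-by-object once one knows the functors are well defined on the coderived category.
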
 
 	\begin{proof}
 		Let 
 		$\underline{\MF}^{\mathbb{Z}/2}_{\qcoh}(V^{\vee}, w)_{\rm{dg}}$ 
 		be the dg-category of $\mathbb{Z}/2$-periodic quasi-coherent 
 		factorizations (see the notation in Subsection~\ref{subsec:fact}). 
 		We have the dg functor
 		\begin{align*}
 			\Hom^{\ast}(\kK_s^{\mathbb{Z}/2}, -) \colon 
 			\underline{\MF}^{\mathbb{Z}/2}_{\qcoh}(V^{\vee}, w)_{\rm{dg}}
 			\to \oO_{\fU}\modu^{\mathbb{Z}/2}. 
 			\end{align*}
 		Here the $\oO_{\fU}$-module structure on 
 		$\Hom^{\ast}(\kK_s^{\mathbb{Z}/2}, -)$
 		is induced by the $\oO_{\fU}$-module structure on 
 		$\kK_s^{\mathbb{Z}/2}=\oO_{V^{\vee}} \otimes_{\oO_Y}
 		\oO_{\fU}$, i.e. the multiplication by the right factor. 
 		Since $\kK_s^{\mathbb{Z}/2}$ is projective as $\oO_{V^{\vee}}$-module, 
 		the above functor preserves absolutely acyclic objects in the homotopy categories. 
 		Therefore it induces the functor 
 		\begin{align*}
 			\Psi^{\mathbb{Z}/2} \colon \MF^{\mathbb{Z}/2}_{\qcoh}(V^{\vee}, w)
 			\to D^{\rm{co}}(\oO_{\fU}\modu^{\mathbb{Z}/2}). 
 			\end{align*}
 		Similarly we have the dg functor 
 		\begin{align*}
 			\kK_s^{\mathbb{Z}/2} \otimes_{\oO_{\fU}}(-) \colon 
 			\oO_{\fU}\modu^{\mathbb{Z}/2}
 			\to \underline{\MF}^{\mathbb{Z}/2}_{\qcoh}(V^{\vee}, w)_{\rm{dg}}. 
 			\end{align*}
 		Since $\kK_s^{\mathbb{Z}/2}$ is projective as a $\mathbb{Z}/2$-graded $\oO_{\fU}$-module, 
 		and taking tensor product preserves direct sums, 
 		the above functor preserves coacyclic objects. 
 		As $V^{\vee}$ is smooth, 
 		coacyclic objects in the homotopy 
 		category of $\mathrm{MF}^{\mathbb{Z}/2}_{\qcoh}(V^{\vee}, w)_{\rm{dg}}$
 		coincide with absolutely acyclic objects by~\cite[Section~3.6, Theorem]{Postwo}. 
 		Therefore the above functor induces the functor 
 		\begin{align*}
 			\Phi^{\mathbb{Z}/2} \colon 
 		 D^{\rm{co}}(\oO_{\fU}\modu^{\mathbb{Z}/2})
 		 \to
 			\MF^{\mathbb{Z}/2}_{\qcoh}(V^{\vee}, w). 
 			\end{align*}
 		It is enough to show that 
 		$\Psi^{\mathbb{Z}/2} \circ \Phi^{\mathbb{Z}/2} \cong \id$ and $\Phi^{\mathbb{Z}/2} \circ \Psi^{\mathbb{Z}/2} \cong \id$. 
 	
 	We first show the isomorphism 
 	$\Phi^{\mathbb{Z}/2} \circ \Psi^{\mathbb{Z}/2} \cong \id$.  		
 		By the definition of $\Phi^{\mathbb{Z}/2}$ and $\Psi^{\mathbb{Z}/2}$, 
 		we have 
 		\begin{align*}
 			\Phi^{\mathbb{Z}/2} \circ \Psi^{\mathbb{Z}/2}(-)=\kK_s^{\mathbb{Z}/2} \otimes_{\oO_{\fU}} \kK_s^{\mathbb{Z}/2\vee} \otimes_{\oO_{V^{\vee}}}(-). 
 			\end{align*}
 		Here $\kK_s^{\mathbb{Z}/2\vee}=\oO_{\fU}^{\vee} \otimes_{\oO_Y}\oO_{V^{\vee}}$ is the $\oO_{V^{\vee}}$-dual 
 		of $\kK_s^{\mathbb{Z}/2}$ which is a factorization of $-w$, 
 		where $\oO_{\fU}^{\vee}$ is the $\oO_Y$-dual of $\oO_{\fU}$, 
 		\begin{align*}
 			\oO_{\fU}^{\vee}=S_{\oO_Y}(V[-1])=\det(V)[-\rank(V)] \otimes_{\oO_Y}\oO_{\fU}. 
 			\end{align*}
 		We have 
 		\begin{align}\label{Kotimes}
 			\kK_s^{\mathbb{Z}/2} \otimes_{\oO_{\fU}} \kK_s^{\mathbb{Z}/2\vee}=
 			(\oO_{V^{\vee}} \otimes_{\oO_Y}\oO_{\fU}^{\vee} \otimes_{\oO_Y}
 			\oO_{V^{\vee}}, 1 \otimes d_{\oO_{\fU}^{\vee}} \otimes 1
 			+\eta_1 \otimes 1_{\oO_{V^{\vee}}}-1_{\oO_{V^{\vee}}}\otimes \eta_2).
 			\end{align}
 		Here $\eta_1 \in V\otimes_{\oO_Y}V^{\vee}$ and $\eta_2 \in V^{\vee} \otimes_{\oO_Y}
 		V$ correspond to $\id \in \Hom(V, V)$, which act
 		on $\oO_{V^{\vee}} \otimes_{\oO_Y}\oO_{\fU}^{\vee}$
 		and $\oO_{\fU}^{\vee} \otimes_{\oO_Y} \oO_{V^{\vee}}$ respectively. 
 		The above object (\ref{Kotimes}) is a factorization of
 		the function 
 		\begin{align*}
 			-p_1^{\ast}w+p_2^{\ast}w \colon 
 			V^{\vee} \times_Y V^{\vee} \to \mathbb{C},
 			\end{align*} giving a 
 		Fourier-Mukai kernel of 
 		$\Phi^{\mathbb{Z}/2} \circ \Psi^{\mathbb{Z}/2}$. 
 		Here $p_i \colon V^{\vee}\times_Y V^{\vee} \to V^{\vee}$ is the projection
 		onto the corresponding factor. 
 		The function $-p_1^{\ast}w+p_2^{\ast}w$ is explicitly written as
 		\begin{align}\label{w:ast}
 			(-p_1^{\ast}w+p_2^{\ast}w)(x, v_2, v_2)=\langle s(x), v_2-v_1\rangle
 			\end{align}
 		for $(x, v_1, v_2) \in V^{\vee}\times_Y V^{\vee}$
 		with $x\in Y$, $v_i \in V^{\vee}|_{y}$. 
 		Let $\alpha, \beta$ be maps
 		\begin{align*}
 			&\alpha \colon V^{\vee} \times_Y V^{\vee} \to 
 			 V^{\vee} \times_Y V^{\vee}\times_Y V^{\vee}, \ 
 			 (x, v_1, v_2) \mapsto (x, v_1, v_2, v_2-v_1), \\
 			 	&\beta \colon V^{\vee} \times_Y V^{\vee} \to 
 			 V^{\vee} \times_Y V^{\vee}\times_Y V, \ 
 			 (x, v_1, v_2) \mapsto (x, v_1, v_2, s(x)). 
 			\end{align*}
 		They are sections of 
 		the vector bundles  
 		\begin{align*}
 			V^{\vee} \times_Y V^{\vee}\times_Y V^{\vee} \to V^{\vee}\times_Y V^{\vee}, \
 		 V^{\vee} \times_Y V^{\vee}\times_Y V \to V^{\vee} \times_Y V^{\vee}
 		 \end{align*}
 	 over $V^{\vee} \times_Y V^{\vee}$, 
 	 given by projections onto the left two factors 
 		 respectively. 
 		 By (\ref{w:ast}), we have 
 		 $-p_1^{\ast}w+p_2^{\ast}w=\langle \alpha, \beta \rangle$. 
 			By unraveling the right hand side of (\ref{Kotimes}), 
 			we see that (\ref{Kotimes}) is the Koszul 
 			factorization associated with $(\alpha, \beta)$ (see (\ref{Kfact})). 
 			Since $\alpha$ is a regular section 
 			whose zero locus is the diagonal 
 			$\Delta \subset V^{\vee}\times_Y V^{\vee}$, by (\ref{Kos:isom})
 		 			we have an isomorphism 
 			in $\MF_{\qcoh}^{\mathbb{Z}/2}(V^{\vee}\times_Y V^{\vee}, 
 			-p_1^{\ast}w+p_2^{\ast}w)$, 
 			\begin{align*}
 				\kK_s^{\mathbb{Z}/2} \otimes_{\oO_{\fU}} \kK_s^{\mathbb{Z}/2\vee}\cong (\oO_{\Delta}, d_{\oO_{\Delta}}=0).
 				\end{align*}
 			The above isomorphism implies that $\Phi^{\mathbb{Z}/2} \circ \Psi^{\mathbb{Z}/2} \cong \id$. 
 			
 			We next show the isomorphism 
 			$\Psi^{\mathbb{Z}/2} \circ \Phi^{\mathbb{Z}/2} \cong \id$. 
 			We have 
 			\begin{align*}
 			\Psi^{\mathbb{Z}/2} \circ \Phi^{\mathbb{Z}/2}(-) =\kK_s^{\mathbb{Z}/2\vee} \otimes_{\oO_{V^{\vee}}}
 			\kK_s^{\mathbb{Z}/2} \otimes_{\oO_{\fU}}(-). 	
 				\end{align*}
 			Here $\kK_s^{\mathbb{Z}/2\vee} \otimes_{\oO_{V^{\vee}}}
 			\kK_s^{\mathbb{Z}/2}$ is 
 			\begin{align}\label{Kotimes2}
 				\kK_s^{\mathbb{Z}/2\vee} \otimes_{\oO_{V^{\vee}}} \kK_s^{\mathbb{Z}/2}=
 				(\oO_{\fU}^{\vee} \otimes_{\oO_Y} \oO_{V^{\vee}} \otimes_{\oO_Y}
 				\oO_{\fU}, d_{\oO_{\fU}^{\vee}}+d_{\oO_{\fU}}-\eta_2 \otimes 
 				1_{\oO_{\fU}}+
 				1_{\oO_{\fU}^{\vee}}\otimes \eta_1). 
 				\end{align}
 			The above object is a $\mathbb{Z}/2$-graded
 			dg $\oO_{\fU \times_Y \fU}$-module, which naturally 
 			lifts to a $\mathbb{Z}$-graded one 
 			$\kK_s^{\vee} \otimes_{\oO_{V^{\vee}}} \kK_s$. 
 		 			On the other hand, 
 			the automorphism of the vector bundle
 			$V \times_Y V$ on $Y$ by 
 			$(x, y) \mapsto (x+y, x-y)/2$ induces the
 			equivalence of derived schemes
 			\begin{align}\label{eq:Usta}
 				\fU \times_Y \fU \stackrel{\sim}{\to} \fU^{\flat} \times_Y 
 				\fU. 
 				\end{align}
 			Here $\fU^{\flat}=\Spec S_{\oO_Y}(V^{\vee}[1])$ with zero 
 			differential on $S_{\oO_Y}(V^{\vee}[1])$. 
 			Under the above equivalence, the 
 			object $\Delta_{\ast}\oO_{\fU}$ corresponds to 
 			$\oO_Y \boxtimes \oO_{\fU}$. 
 			We have the Koszul 
 			resolution of $\oO_{Y}$ as 
 			$\mathbb{Z}$-graded dg $\oO_{\fU^{\flat}}$-modules 
 			\begin{align*}
 				\cdots 
 				\to \bigwedge^2(V^{\vee}[1]) \otimes_{\oO_Y}
 				\oO_{\fU^{\flat}} 
 				\to V^{\vee}[1] \otimes_{\oO_Y} \oO_{\fU^{\flat}}
 				\to \oO_{\fU^{\flat}} \to \oO_Y \to 0. 
 				\end{align*}
 			By taking the $\oO_Y$-dual of the above resolution, 
 			we obtain the following resolution 
 			\begin{align*}
 				0 \to \oO_Y \to \oO_{\fU^{\flat}}^{\vee} \to 
 				\oO_{\fU^{\flat}}^{\vee} \otimes_{\oO_Y}V[-1] \to 
 				\oO_{\fU^{\flat}}^{\vee} \otimes_{\oO_Y} \bigwedge^2(V[-1]) \to 
 				\cdots. 
 				\end{align*}
 			The totalization of the above sequence is a $\mathbb{Z}$-graded
 			dg-module over $\oO_{\fU^{\flat}}$ which is acyclic and bounded 
 			below, hence it is coacyclic 
 			by~\cite[Section~3.4, Theorem~1 (a)]{Postwo}. 
 		By applying $\boxtimes \oO_{\fU}$ and pull-back 
 		by the equivalence (\ref{eq:Usta}), 
 		we obtain the complex of $\mathbb{Z}$-graded
 		dg $\oO_{\fU \times_Y \fU}$-modules 
 		\begin{align*}
 		0 \to \Delta_{\ast}\oO_{\fU} \to 
 		\oO_{\fU}^{\vee} \otimes_{\oO_Y} \oO_{\fU} \to 
 		\oO_{\fU}^{\vee} \otimes_{\oO_Y} V[-1] \otimes_{\oO_Y}
 		\oO_{\fU} \to 
 			\oO_{\fU}^{\vee} \otimes_{\oO_Y} \bigwedge^2(V[-1]) \otimes_{\oO_Y}
 		\oO_{\fU} \to \cdots 
 		\end{align*}
 	whose totalization is coacyclic. 
 	By unraveling the differentials and comparing with (\ref{Kotimes2}), 
 	we see that the above 
 	complex is $\Delta_{\ast}\oO_{\fU} \to 
 	\kK_s^{\vee}\otimes_{\oO_{V^{\vee}}}\kK_s$. 
 	It follows that 
 	we have the isomorphism 
 	\begin{align}\notag
 		\Delta_{\ast}\oO_{\fU} \stackrel{\cong}{\to}
 		\kK_s^{\vee}\otimes_{\oO_{V^{\vee}}}\kK_s
 		\end{align}
 	in $D^{\rm{co}}(\oO_{\fU \times_Y \fU}\modu^{\mathbb{Z}})$. 
 	Therefore the object (\ref{Kotimes2}) is also 
 	isomorphic to $\Delta_{\ast}\oO_{\fU}$ 
 	in  
 	$D^{\rm{co}}(\oO_{\fU \times_Y \fU}\modu^{\mathbb{Z}/2})$.
  	Since $\oO_{\fU}$ is Gorenstein,
  	by~\cite[Section~1.7, Proposition]{MR3366002}
 	an object in $D^{\rm{co}}(\oO_{\fU}\modu^{\mathbb{Z}/2})$
 	 is represented by a $\mathbb{Z}/2$-graded
 	dg $\oO_{\fU}$-module $M$ which is flat over $\oO_{\fU}$. 
 	For such an object $M$ and a coacyclic 
 	object $\pP \in \mathrm{Ho}(\oO_{\fU \times_Y \fU}\modu^{\mathbb{Z}/2})$, 
 	the object $M \otimes_{\oO_{\fU}}\pP$ is also coacyclic. 
 	Therefore
 	the isomorphism $\Psi^{\mathbb{Z}/2} \circ \Phi^{\mathbb{Z}/2}(M) \cong M$ holds. 
 		\end{proof}
 	 	
 	 	\begin{cor}\label{cor:abs}
 	 		There is an equivalence
 	 		\begin{align}\notag
 	 			\Psi^{\mathbb{Z}/2} \colon  \overline{\MF}^{\mathbb{Z}/2}_{\coh}(V^{\vee}, w)
 	 			\stackrel{\sim}{\to} \overline{D}^{\rm{abs}}(\oO_{\fU}\modu_{\rm{fg}}^{\mathbb{Z}/2}). 
 	 		\end{align}
 	 		Here $\overline{(-)}$ indicates idempotent completion. 
 	 		\end{cor}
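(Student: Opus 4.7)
The plan is to derive the corollary from Proposition~\ref{prop:Z2dual} by restricting the equivalence there to the subcategories of compact objects on both sides, and then passing to idempotent completions.

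First I would verify that $\Psi^{\mathbb{Z}/2}$ sends $\MF^{\mathbb{Z}/2}_{\coh}(V^{\vee}, w)$ into $D^{\rm{abs}}(\oO_{\fU}\modu_{\rm{fg}}^{\mathbb{Z}/2})$. Since $\kK_s^{\mathbb{Z}/2}$ is a bounded complex of locally free $\oO_{V^{\vee}}$-modules of finite rank, the dg $\oO_{\fU}$-module $\RHom(\kK_s^{\mathbb{Z}/2}, \pP)$ for $\pP \in \MF^{\mathbb{Z}/2}_{\coh}(V^{\vee}, w)$ is finitely generated over $\hH^0(\oO_{\fU})$, and can be represented by a finitely-generated dg $\oO_{\fU}$-module, hence determines an object in $D^{\rm{abs}}(\oO_{\fU}\modu_{\rm{fg}}^{\mathbb{Z}/2})$. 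Conversely, since $\kK_s^{\mathbb{Z}/2}$ has coherent cohomology as an $\oO_{V^{\vee}}$-module, the quasi-inverse $\Phi^{\mathbb{Z}/2}$ sends $D^{\rm{abs}}(\oO_{\fU}\modu_{\rm{fg}}^{\mathbb{Z}/2})$ into $\MF^{\mathbb{Z}/2}_{\coh}(V^{\vee}, w)$.

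Next I would invoke compact generation on both sides. By the result of Positselski cited in Remark~\ref{rmk:indcoh} (namely \cite[Section~3.11, Theorem~1]{Postwo}), the coderived category $D^{\rm{co}}(\oO_{\fU}\modu^{\mathbb{Z}/2})$ is compactly generated, with its subcategory of compact objects being equivalent to $\overline{D}^{\rm{abs}}(\oO_{\fU}\modu_{\rm{fg}}^{\mathbb{Z}/2})$ via the functor (\ref{abs:co}). On the factorization side, by standard properties of factorization categories (\cite{MR3366002, MR3112502}), the category $\MF^{\mathbb{Z}/2}_{\qcoh}(V^{\vee}, w)$ is compactly generated by $\MF^{\mathbb{Z}/2}_{\coh}(V^{\vee}, w)$, so its subcategory of compact objects is $\overline{\MF}^{\mathbb{Z}/2}_{\coh}(V^{\vee}, w)$.

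Since any triangulated equivalence preserves compact objects, the equivalence $\Psi^{\mathbb{Z}/2}$ of Proposition~\ref{prop:Z2dual} restricts to an equivalence between subcategories of compact objects, yielding the desired equivalence
\begin{align*}
\Psi^{\mathbb{Z}/2} \colon \overline{\MF}^{\mathbb{Z}/2}_{\coh}(V^{\vee}, w) \stackrel{\sim}{\to} \overline{D}^{\rm{abs}}(\oO_{\fU}\modu_{\rm{fg}}^{\mathbb{Z}/2}).
\end{align*}
The most delicate point is the identification of compact objects on the factorization side: one needs to know that every coherent factorization is compact in the quasi-coherent category, and conversely that every compact object is a summand of a coherent factorization. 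This is standard in the references cited but must be checked to hold in the $\mathbb{Z}/2$-periodic setting where, as Remark~\ref{rmk:Z2} warns, analogues of Lemma~\ref{lem:equivDb} can fail.
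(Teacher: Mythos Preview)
Your proposal is correct and follows essentially the same route as the paper: restrict the equivalence of Proposition~\ref{prop:Z2dual} to compact objects, using that $\MF^{\mathbb{Z}/2}_{\qcoh}(V^{\vee}, w)$ is compactly generated by $\MF^{\mathbb{Z}/2}_{\coh}(V^{\vee}, w)$ (the paper cites \cite[Proposition~3.15]{MR3270588}) and that $D^{\rm{co}}(\oO_{\fU}\modu^{\mathbb{Z}/2})$ is compactly generated by $D^{\rm{abs}}(\oO_{\fU}\modu_{\rm{fg}}^{\mathbb{Z}/2})$ (\cite[Section~3.11, Theorem~1]{Postwo}). Your opening paragraph, checking by hand that $\Psi^{\mathbb{Z}/2}$ and $\Phi^{\mathbb{Z}/2}$ preserve the coherent/finitely-generated subcategories, is not needed once the compact-generation argument is in place and the paper omits it; it is harmless but redundant.
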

  		\begin{proof}
  			The equivalence (\ref{equiv:Z21}) restricts to 
  			the equivalence between the subcategories of compact objects. 
  			Since 
  			$\mathrm{MF}_{\qcoh}^{\mathbb{Z}/2}(V^{\vee}, w)$ is compactly generated by 
  			$\mathrm{MF}_{\coh}^{\mathbb{Z}/2}(V^{\vee}, w)$ 
  			(see~\cite[Proposition~3.15]{MR3270588}), 
  			and 
  			$D^{\rm{co}}(\oO_{\fU}\modu^{\mathbb{Z}/2})$
  			is compactly generated by compact objects 
  			$D^{\rm{abs}}(\oO_{\fU}\modu_{\rm{fg}}^{\mathbb{Z}/2})$ (see~\cite[Section~3.11, Theorem~1]{Postwo}), we have 
  			equivalences
  			\begin{align*}
  				\overline{\MF}^{\mathbb{Z}/2}_{\coh}(V^{\vee}, w)
  				\stackrel{\sim}{\to}\mathrm{MF}_{\qcoh}^{\mathbb{Z}/2}(V^{\vee}, w)^{\rm{cp}}, \ 
  				\overline{D}^{\rm{abs}}(\oO_{\fU}\modu_{\rm{fg}}^{\mathbb{Z}/2})
  				\stackrel{\sim}{\to} 
  				D^{\rm{co}}(\oO_{\fU}\modu^{\mathbb{Z}/2})^{\rm{cp}}.
  				\end{align*}
  			Therefore the corollary follows. 
  			\end{proof}
  		
  		\begin{exam}\label{exam:3}
  			In the setting of Example~\ref{exam:1}, 
  			let us consider the functor 
  			\begin{align*}
  				\Psi^{\mathbb{Z}/2} \colon \mathrm{MF}_{\qcoh}^{\mathbb{Z}/2}(\mathbb{A}^1, 0)
  				\to D^{\rm{co}}(\mathbb{C}[\epsilon]\modu^{\mathbb{Z}/2}). 
  				\end{align*}
  			For $y\in \mathbb{A}^1$, 
  			we have \begin{align*}
  				\Psi^{\mathbb{Z}/2}(\oO_y)=
  				\left( \mathbb{C} \oplus \mathbb{C} \epsilon^{\vee}, 
  				d=y\epsilon 
  				\right).
  			\end{align*}
  		A difference from Example~\ref{exam:1} is that we no longer have 
  		the second isomorphism in (\ref{Psiy}) 
  		so that $\Psi^{\mathbb{Z}/2}(\oO_y) \neq 0$ in the coderived category. 
  			\end{exam}
  		
  		\begin{rmk}\label{rmk:Zgrade}
  			The proof of Proposition~\ref{prop:Z2dual} is also applied to the 
  			$\mathbb{Z}$-graded case, which shows the equivalence
  			\begin{align*}
  				\Psi \colon \MF_{\qcoh}^{\C}(V^{\vee}, w)
  				\stackrel{\sim}{\to} D^{\rm{co}}(\oO_{\fU}\modu^{\mathbb{Z}}). 
  				\end{align*}
  			As the right hand side is equivalent to $\Ind \Dbc(\fU)$
  			(see Remark~\ref{rmk:indcoh}), 
  			we recover the equivalence (\ref{ind:Psi}) in Theorem~\ref{thm:knoer}. 
  			\end{rmk}
 	 
 	 	\subsection{$\mathbb{Z}/2$-periodic Koszul duality via singular supports}\label{Kdual:supp}
 	 As we mentioned in Remark~\ref{rmk:indcoh}, the coderived category in 
 	 the $\mathbb{Z}$-graded case is equivalent to the category of 
 	 ind-coherent sheaves, whose general theory is well-established~\cite{MR3136100, MR3300415}
 	 in derived algebraic geometry.
 	 On the other hand, the $\mathbb{Z}/2$-periodic coderived category is 
 	 not yet well developed in the context of derived algebraic geometry, 
 	 e.g. its singular support theory and higher categorical treatment are missing. 
 	 In this subsection, we give another kind of $\mathbb{Z}/2$-periodic 
 	 Koszul duality described in the framework of usual $\mathbb{Z}$-graded
 	 derived category of coherent sheaves and ind-coherent sheaves. 
 	 The Koszul duality in this subsection will be used 
 	 to give a global model for $\mathbb{Z}/2$-periodic DT category. 
 	  We first prepare some lemmas. 
 	
	\begin{lem}\label{lem:quotient}
	Let $\C$ acts on a $\mathbb{C}$-scheme $A$ 
	which restricts to the trivial action on 
	$\mu_2 \subset \C$. 
	The inclusion $A \hookrightarrow A \times \C$, 
	$x \mapsto (x, 1)$ 
	induces the isomorphism of stacks 
	\begin{align}\label{isom:A2}
	\iota \colon [A/\mu_2] \stackrel{\cong}{\to} [(A \times \C)/\C]. 
		\end{align}
	Here $\mu_2$ acts on $A$ trivially, and 
	$\C$ acts on $A \times \C$ by 
	$t(x, u)=(t(x), t^2 u)$. 
		\end{lem}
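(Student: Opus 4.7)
My approach is to verify the isomorphism at the level of functors of points. Write $\overline{\C}$ for the quotient $\C/\mu_2$, identified with $\C$ via the squaring map $t \mapsto t^2$. By hypothesis the $\C$-action on $A$ is trivial on $\mu_2$ and hence factors through $\overline{\C}$; the weight-two action $u \mapsto t^2 u$ on the second factor of $A \times \C$ likewise factors through $\overline{\C}$, with the remaining $\overline{\C}$-action on $\C$ being free.

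For an affine test scheme $T$, I would translate both stacks into the same triples $(L, \varphi, \bar{f})$ consisting of a line bundle $L$ on $T$, a trivialization $\varphi \colon L^{\otimes 2} \xrightarrow{\sim} \oO_T$, and a morphism $\bar{f} \colon T \to A$. For the left-hand side $[A/\mu_2](T)$ this is standard: a $\mu_2$-torsor is the same as a pair $(L, \varphi)$, and a $\mu_2$-equivariant map to $A$ with trivial $\mu_2$-action is just a morphism $\bar{f} \colon T \to A$. For the right-hand side $[(A \times \C)/\C](T)$, a $T$-point is a $\C$-torsor $Q = L^{\times}$ together with a $\C$-equivariant map $(g_1, g_2) \colon Q \to A \times \C$. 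The weight-two equivariance of $g_2$ translates into a trivialization $\varphi$ of $L^{\otimes 2}$, and since the $\C$-action on $A$ factors through $\overline{\C}$, the map $g_1$ descends to an $\overline{\C}$-equivariant map from $Q/\mu_2 \cong (L^{\otimes 2})^{\times}$ to $A$; using $\varphi$ to identify $(L^{\otimes 2})^{\times} \cong T \times \C$ as $\overline{\C}$-torsors reduces this to a morphism $\bar{f} \colon T \to A$.

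Finally I would trace through $\iota$ to confirm that it realizes the matching of these two descriptions. Starting from $(L, \varphi, \bar{f})$ on the left, the associated $\C$-torsor is $Q = L^{\times}$, the map $g_1$ is the $\C$-equivariant extension of $\bar{f}$ along $L^{\times} \to T$ via the $\overline{\C}$-action on $A$, and $g_2$ is the weight-two map encoded by $\varphi$; restriction along the slice $A \hookrightarrow A \times \C$, $x \mapsto (x, 1)$, which defines $\iota$, recovers the original data. Naturality in $T$ and fpqc descent then upgrade this to the claimed isomorphism of stacks. The main technical point, though essentially bookkeeping, is the quotient identification $L^{\times}/\mu_2 \cong (L^{\otimes 2})^{\times}$ via $\ell \mapsto \ell \otimes \ell$ and the resulting dictionary between weight-two equivariance and trivializations of $L^{\otimes 2}$; once these are in hand, everything is formal.
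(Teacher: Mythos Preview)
Your argument is correct. You compute the groupoid of $T$-points of each side and identify both with the groupoid of triples $(L,\varphi,\bar f)$; the key identifications $L^{\times}/\mu_2\cong (L^{\otimes 2})^{\times}$ and ``weight-two equivariant map to $\C$'' $\leftrightarrow$ ``trivialization of $L^{\otimes 2}$'' are handled correctly, and the trivial $\mu_2$-action on $A$ lets a $\mu_2$-equivariant map from a torsor descend to $T$.

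The paper takes a different, more geometric route: instead of matching functors of points, it writes down an explicit inverse morphism. From the $\C$-invariant map $A\times\C\to A$, $(x,u)\mapsto u^{-1/2}\cdot x$ (well-defined precisely because $\mu_2\subset\C$ acts trivially on $A$) one obtains $[(A\times\C)/\C]\to A$, and the second projection gives $[(A\times\C)/\C]\to[\C/\C]=B\mu_2$; the product of these two is the inverse of $\iota$, since $[A/\mu_2]=A\times B\mu_2$. Your functor-of-points computation is essentially the unpackaging of this inverse at the level of $T$-points: the factor $u^{-1/2}\cdot x$ is exactly your procedure of using $\varphi$ to trivialize $(L^{\otimes 2})^{\times}$ and then reading off $\bar f$, and the projection to $B\mu_2$ records $(L,\varphi)$. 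The paper's version is shorter and leaves the verification that the two maps are mutually inverse to the reader; your version makes that verification explicit but is correspondingly longer. One small remark: the closing appeal to ``fpqc descent'' is unnecessary, since a natural equivalence of the groupoids of $T$-points for all affine $T$ already gives an isomorphism of stacks.
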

	\begin{proof}
		By the assumption that $\C$-action on $A$ restricts to the 
		trivial $\mu_2$-action, 
		the map 
		$\C \times A \to A$, $(x, u) \mapsto u^{-1/2} \cdot x$
		is well-defined and $\C$-invariant. 
		By taking the quotient by $\C$, 
		 we obtain the morphism of stacks 
		$[(A \times \C)/\C] \to A$. 
		We also have the projection 
		$[(A \times \C)/\C] \to [\C/\C]=B \mu_2$, 
		so we obtain the morphism 
		\begin{align*}
			[(A \times \C)/\C] \to A \times B{\mu_2}=[A/\mu_2]. 
			\end{align*}
		It is straightforward to check that the above morphism 
		gives an inverse of (\ref{isom:A2}). 
		\end{proof}
	
	\begin{lem}\label{lem:equivA}
		In the setting of Lemma~\ref{lem:quotient}, suppose that $A$ is smooth. 
		Let  
		$w \colon A \to \mathbb{C}$ be a regular function of $\C$-weight two, 
		and $w_{\epsilon} \colon A \times \C \to \mathbb{C}$ be 
		defined by $w_{\epsilon}(x, t)=w(x)$. 
		Then we have equivalences
		\begin{align*}
			\iota^{\ast} \colon 
			  \MF^{\C}_{\star}(A \times \C, w_{\epsilon})
			 \stackrel{\sim}{\to} 
			 \MF^{\mathbb{Z}/2}_{\star}(A, w), \ 
			 \star \in \{\qcoh, \coh\}. 
			\end{align*}
		\end{lem}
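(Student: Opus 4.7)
The plan is to deduce the equivalence directly from the stack isomorphism $\iota$ of Lemma~\ref{lem:quotient} by observing that both sides are instances of the general factorization construction $\MF_{\star}(\xX, w)$ reviewed in Subsection~\ref{subsec:fact}, where $\xX$ is a smooth stack, $\lL$ is a line bundle, and $w \in \Gamma(\xX, \lL^{\otimes 2})$. Concretely, the right-hand side is $\MF_{\star}([(A \times \C)/\C], w_{\epsilon})$ with $\lL_{\C}$ induced by the weight-one $\C$-character, while the left-hand side is $\MF_{\star}([A/\mu_2], w)$ with $\lL_{\mathbb{Z}/2}$ induced by the nontrivial $\mu_2$-character. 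Once $\iota$ is shown to intertwine the triples $([(A \times \C)/\C], \lL_{\C}, w_{\epsilon})$ and $([A/\mu_2], \lL_{\mathbb{Z}/2}, w)$, the pullback $\iota^{\ast}$ is an equivalence of the underlying dg-categories of (quasi-)coherent factorizations, and hence descends to an equivalence of the Verdier (resp. Drinfeld) quotients by absolutely acyclic objects.

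The two verifications are straightforward. At the atlas level $\iota$ is the inclusion $A \hookrightarrow A \times \C$, $x \mapsto (x,1)$, so $\iota^{\ast} w_{\epsilon} = w$ by the definition $w_{\epsilon}(x,t) = w(x)$. For the line bundles, the $\C$-action on the $\C$-factor of $A \times \C$ has weight two, so the stabilizer of $1 \in \C$ is exactly $\mu_2 \subset \C$, and the weight-one $\C$-character restricts to the nontrivial character of $\mu_2$. Thus the pullback of $\lL_{\C}$ along $x \mapsto (x,1)$ is precisely the nontrivial $\mu_2$-equivariant line bundle on $A$, which is $\lL_{\mathbb{Z}/2}$ under the identification $[A/\mu_2] = A \times B\mu_2$.

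The only point to watch is the compatibility of weights: the assumption that $w$ has $\C$-weight two is exactly what makes $w_{\epsilon}$ a global section of $\lL_{\C}^{\otimes 2}$ (since both $w$ and the $\C$-factor have weight two), and dually ensures that $w$ descends to a section of $\lL_{\mathbb{Z}/2}^{\otimes 2}$ on $[A/\mu_2]$ via the trivial $\mu_2$-action on $A$. Given these identifications, the claim reduces to the general fact that an isomorphism of stacks carrying one triple $(\xX, \lL, w)$ to another induces an equivalence of factorization dg-categories by pullback, and this construction manifestly preserves coherence versus quasi-coherence of the underlying sheaves, giving the result for both $\star \in \{\qcoh, \coh\}$.
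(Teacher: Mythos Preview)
Your proposal is correct and follows essentially the same approach as the paper: both arguments recognize the two categories as instances of $\MF_{\star}(\xX, \lL, w)$ and transport one to the other via the stack isomorphism $\iota$ of Lemma~\ref{lem:quotient}, checking that $\iota$ intertwines the line bundles and the potentials. Your write-up is in fact more explicit than the paper's about the line-bundle compatibility (the paper simply notes that $\lL_{\C}^{\otimes 2}$ pulls back to the trivial line bundle on $[A/\mu_2]$, which is the same observation since $\lL_{\mathbb{Z}/2}^{\otimes 2}$ is trivial); one small cosmetic slip is that you swapped ``left-hand side'' and ``right-hand side'' relative to the displayed arrow, but this does not affect the argument.
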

	\begin{proof}
		The function $w_{\epsilon}$ is of $\C$-weight two, 
		so it is a global section of 
		the line bundle $\oO(2)$ on 
		$[(A \times \C)/\C]$ induced by the $\C$-character of weight two. 
		By the isomorphism $\iota$ in Lemma~\ref{lem:quotient}, it is 
		pulled back to the global section $w$ of the trivial line bundle on 
		$[A/\mu_2]$. 
		Therefore the lemma follows from Lemma~\ref{lem:quotient}. 
		\end{proof}
	
	We return to the setting of Theorem~\ref{thm:knoer}.
	We define the following affine derived scheme
	\begin{align*}
		\fU_{\epsilon} \cneq 
		\fU \times \Spec \mathbb{C}[\epsilon],
		\end{align*} 
	where $\deg(\epsilon)=-1$ with zero differential. 
	Note that $\fU_{\epsilon}$ is the derived zero 
	locus of $s_{\epsilon} \cneq (s, 0) \colon Y \to V \times \mathbb{A}^1$
	for the vector bundle $V\times \mathbb{A}^1 \to Y$. 
	Therefore we have 
	\begin{align*}
		t_0(\Omega_{\fU_{\epsilon}}[-1])=\Crit(w_{\epsilon})=
		\Crit(w) \times \mathbb{A}^1. 
		\end{align*}
	Here $w_{\epsilon} \colon V^{\vee} \times \mathbb{A}^1 \to \mathbb{C}$
	is given by $(x, v, t) \mapsto w(x, v)=\langle s(x), v \rangle$
	as in Lemma~\ref{lem:equivA}. 
	The following is another $\mathbb{Z}/2$-periodic version of Koszul duality equivalence: 
	\begin{prop}\label{prop:Koszul:Z2}
		There is an equivalence of triangulated categories
		\begin{align}\label{equiv:Z2}
		\Psi_{\epsilon} \colon 
		\MF_{\coh}^{\mathbb{Z}/2}(V^{\vee}, w) \stackrel{\sim}{\to}
		\Dbc(\fU_{\epsilon})/\cC_{\Crit(w) \times \{0\}}.
		\end{align}		
		\end{prop}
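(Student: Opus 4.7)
The plan is to bootstrap the proposition from Theorem~\ref{thm:knoer} and Proposition~\ref{prop:ssupp} applied to the enlarged Koszul data, combined with Lemma~\ref{lem:equivA} to convert a $\C$-equivariant factorization category on $V^{\vee} \times \C$ into a $\mathbb{Z}/2$-periodic one on $V^{\vee}$. Concretely, the derived scheme $\fU_{\epsilon}$ is the derived zero locus of the section $s_{\epsilon} = (s, 0)$ of $V \times \mathbb{A}^1 \to Y$, and the associated function on the dual total space is $w_{\epsilon}(x, v, t) = \langle s(x), v\rangle$. Theorem~\ref{thm:knoer} then provides a Koszul equivalence
\[
\Psi \colon \MF^{\C}_{\coh}(V^{\vee}\times \mathbb{A}^1, w_{\epsilon}) \stackrel{\sim}{\to} \Dbc(\fU_{\epsilon}),
\]
while Proposition~\ref{prop:ssupp} identifies, under $\Psi$, the support subcategory indexed by the conical closed subset $\Crit(w)\times\{0\} \subset \Crit(w_{\epsilon}) = \Crit(w)\times \mathbb{A}^1$ with $\cC_{\Crit(w)\times\{0\}}$. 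Passing to the quotient via (\ref{Psi:supp2}), this yields an equivalence
\[
\MF^{\C}_{\coh}\bigl(V^{\vee}\times \mathbb{A}^1 \setminus (\Crit(w)\times\{0\}), w_{\epsilon}\bigr) \stackrel{\sim}{\to} \Dbc(\fU_{\epsilon})/\cC_{\Crit(w)\times\{0\}}.
\]

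The remaining task is to identify the source with $\MF^{\mathbb{Z}/2}_{\coh}(V^{\vee}, w)$. Consider the open immersion $V^{\vee}\times \C \hookrightarrow V^{\vee}\times \mathbb{A}^1 \setminus (\Crit(w)\times\{0\})$, whose complement is $(V^{\vee}\setminus \Crit(w))\times\{0\}$. I would argue that restriction along this open inclusion is an equivalence of $\C$-equivariant factorization categories, and then apply Lemma~\ref{lem:equivA} with $A = V^{\vee}$ to identify $\MF^{\C}_{\coh}(V^{\vee}\times \C, w_{\epsilon}) \cong \MF^{\mathbb{Z}/2}_{\coh}(V^{\vee}, w)$. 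Composing the three equivalences produces the desired $\Psi_{\epsilon}$.

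The one step that requires care is showing the restriction $\MF^{\C}_{\coh}(V^{\vee}\times \mathbb{A}^1 \setminus (\Crit(w)\times\{0\}), w_{\epsilon}) \to \MF^{\C}_{\coh}(V^{\vee}\times \C, w_{\epsilon})$ is an equivalence. By (\ref{equiv:supporZ}), its kernel is the subcategory supported on the removed part $(V^{\vee}\setminus \Crit(w))\times\{0\}$; since this locus is disjoint from the critical locus of $w_{\epsilon}$ restricted to the open complement of $\Crit(w)\times\{0\}$, namely from $\Crit(w)\times \C$, the vanishing property (\ref{Crit:supp}) forces this kernel to be zero. Essential surjectivity follows by factoring the full restriction $\MF^{\C}_{\coh}(V^{\vee}\times \mathbb{A}^1, w_{\epsilon}) \to \MF^{\C}_{\coh}(V^{\vee}\times \C, w_{\epsilon})$ through the intermediate category and invoking (\ref{equiv:supporZ}) for the open $V^{\vee}\times \C \subset V^{\vee}\times \mathbb{A}^1$.

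The main obstacle, then, is really this support manipulation: one must verify that ``supported on $V^{\vee}\times\{0\}$'' and ``supported on $\Crit(w)\times\{0\}$'' define the same subcategory of $\MF^{\C}_{\coh}(V^{\vee}\times\mathbb{A}^1, w_{\epsilon})$, which is exactly the content of the kernel computation above. Once this is in hand, the proposition follows by formal composition of the three equivalences produced by Proposition~\ref{prop:ssupp}, the open restriction, and Lemma~\ref{lem:equivA}.
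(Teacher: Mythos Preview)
Your proposal is correct and follows essentially the same route as the paper. The paper applies (\ref{equiv:supporZ}) to the open $V^{\vee}\times\C \subset V^{\vee}\times\mathbb{A}^1$ and then uses (\ref{Crit:supp}) together with $\Crit(w_{\epsilon})\cap(V^{\vee}\times\{0\})=\Crit(w)\times\{0\}$ to identify the support subcategories for $V^{\vee}\times\{0\}$ and $\Crit(w)\times\{0\}$ before taking the Verdier quotient; you instead first pass to the open $V^{\vee}\times\mathbb{A}^1\setminus(\Crit(w)\times\{0\})$ via (\ref{Psi:supp2}) and then use the same critical-locus observation to see that further restricting to $V^{\vee}\times\C$ is an equivalence. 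These are the same argument with the two quotient steps taken in the opposite order.
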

	\begin{proof}
		Let $\C$ acts on $V^{\vee} \times \mathbb{A}^1$
		by weight two on fibers of 
		$V^{\vee} \times \mathbb{A}^1 \to Y$. 
		By Lemma~\ref{lem:equivA}, 
		the left hand side of (\ref{equiv:Z2}) is equivalent to 
		$\MF^{\C}_{\coh}(V^{\vee} \times \C, w_{\epsilon})$. 
		We have the equivalence by (\ref{equiv:supporZ})
		\begin{align}\label{equiv:MF1}
			\MF^{\C}_{\coh}(V^{\vee} \times \mathbb{A}^1, w_{\epsilon})/
			\MF^{\C}_{\coh}(V^{\vee} \times \mathbb{A}^1, w_{\epsilon})_{V^{\vee} \times \{0\}}
			\stackrel{\sim}{\to}
			\MF^{\C}_{\coh}(V^{\vee} \times \C, w_{\epsilon}). 
			\end{align}
		By Theorem~\ref{thm:knoer}, we have the equivalence
		\begin{align*}
			\Psi \colon \MF^{\C}_{\coh}(V^{\vee} \times \mathbb{A}^1, w_{\epsilon})
			\stackrel{\sim}{\to}
			\Dbc(\fU_{\epsilon}). 				
			\end{align*}
		By (\ref{Psi:supp}),
		the above equivalence restricts to the equivalence 
		\begin{align*}
			\MF_{\coh}^{\C}(V^{\vee} \times \mathbb{A}^1)_{V^{\vee} \times \{0\}}
			\stackrel{\sim}{\to}
				\MF_{\coh}^{\C}(V^{\vee} \times \mathbb{A}^1)_{\Crit(w) \times \{0\}}
			\stackrel{\sim}{\to}	\cC_{\Crit(w) \times \{0\}}.
						\end{align*} 
		Here the first equivalence follows from 
		$\Crit(w_{\epsilon})\cap (V^{\vee} \times \{0\})=\Crit(w) \times \{0\}$
		and the equivalence (\ref{Crit:supp}). 
		By taking the Verdier quotients, we obtain the equivalence
		\begin{align}\label{equiv:MF2}
				\MF^{\C}_{\coh}(V^{\vee} \times \mathbb{A}^1, w_{\epsilon})/
			\MF^{\C}_{\coh}(V^{\vee} \times \mathbb{A}^1, w_{\epsilon})_{V^{\vee} \times \{0\}}
			\stackrel{\sim}{\to}
				\Dbc(\fU_{\epsilon})/\cC_{\Crit(w) \times \{0\}}.
			\end{align}
		By combining the above equivalences, 
		we obtain the equivalence (\ref{equiv:Z2}). 
		\end{proof}
	
	Let $\Phi_{\epsilon}$ be a quasi-inverse of $\Psi_{\epsilon}$
		\begin{align}\notag
			\Phi_{\epsilon} \colon 
					\Dbc(\fU_{\epsilon})/\cC_{\Crit(w) \times \{0\}}
					\stackrel{\sim}{\to}
						\MF_{\coh}^{\mathbb{Z}/2}(V^{\vee}, w). 
		\end{align}	
	It is described as follows: 
	\begin{lem}\label{lem:Phie}
		The equivalence $\Phi_{\epsilon}$ is given by 
	\begin{align}\label{Phie:write}
		\Phi_{\epsilon}(M, d_M)=(\oO_{V^{\vee}} \otimes_{\oO_Y}M^{\mathbb{Z}/2}, 
		1\otimes (d_M+\epsilon) +\eta). 
		\end{align}
	Here $(M, d_M)$ is a $\mathbb{Z}$-graded dg $\oO_{\fU_{\epsilon}}$-module, 
	$M^{\mathbb{Z}/2}=M^{\rm{even}} \oplus M^{\rm{odd}}$, 
	and $\eta \in V \otimes V^{\vee}$ corresponds to $\id \in \Hom(V, V)$
	as in (\ref{diff:ds}). 
	\end{lem}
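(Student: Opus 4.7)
The plan is to obtain $\Phi_{\epsilon}$ as the composition of the quasi-inverses of the three equivalences used in the proof of Proposition~\ref{prop:Koszul:Z2}. Schematically, $\Psi_{\epsilon}$ is built by first taking the inverse of $\iota^{\ast}$ from Lemma~\ref{lem:equivA}, then extending along the open immersion $j\colon V^{\vee}\times\C\hookrightarrow V^{\vee}\times\mathbb{A}^1$ modulo the singular-support subcategory, and finally applying the $\C$-equivariant Koszul duality $\Psi$ from Theorem~\ref{thm:knoer} for the section $s_{\epsilon}=(s,0)$ of $V\oplus(Y\times\mathbb{A}^1)\to Y$. Consequently, $\Phi_{\epsilon}=\iota^{\ast}\circ j^{\ast}\circ\Phi$, where $\Phi(-)=\kK_{s_{\epsilon}}\otimes_{\oO_{\fU_{\epsilon}}}(-)$ is the quasi-inverse of $\Psi$ recorded in (\ref{equiv:Phi}).

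First I would write down the Koszul factorization $\kK_{s_{\epsilon}}$ explicitly. Since the dual bundle splits as $V^{\vee}\oplus(Y\times\mathbb{A}^1)$, the element of $(V\oplus\mathbb{A}^1)\otimes(V\oplus\mathbb{A}^1)^{\vee}$ corresponding to $\id$ decomposes as $\eta_V+t\epsilon$, where $\eta_V\in V\otimes V^{\vee}$ is the element from (\ref{diff:ds}), $t$ is the $\mathbb{A}^1$-coordinate on $V^{\vee}\times\mathbb{A}^1$, and $\epsilon$ is the degree $-1$ generator of $\oO_{\fU_{\epsilon}}$. Hence $\kK_{s_{\epsilon}}=\oO_{V^{\vee}\times\mathbb{A}^1}\otimes_{\oO_Y}\oO_{\fU_{\epsilon}}$, with differential $1\otimes d_{\oO_{\fU_{\epsilon}}}+\eta_V+t\epsilon$.

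For a $\mathbb{Z}$-graded dg $\oO_{\fU_{\epsilon}}$-module $(M,d_M)$, the tensor product $\kK_{s_{\epsilon}}\otimes_{\oO_{\fU_{\epsilon}}}M$ is immediately computed to be $\oO_{V^{\vee}\times\mathbb{A}^1}\otimes_{\oO_Y}M$ with differential $1\otimes d_M+\eta_V+t\cdot\epsilon$, where $\epsilon$ now denotes the action on $M$ coming from its $\oO_{\fU_{\epsilon}}$-module structure. The restriction $j^{\ast}$ simply inverts $t$, and the equivalence $\iota^{\ast}$ of Lemma~\ref{lem:equivA} is induced by pull-back along $V^{\vee}\times\{1\}\hookrightarrow V^{\vee}\times\C$ together with passage from the $\C$-weight $\mathbb{Z}$-grading to the $\mathbb{Z}/2$-grading. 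Setting $t=1$ therefore yields $\oO_{V^{\vee}}\otimes_{\oO_Y}M^{\mathbb{Z}/2}$ with differential $1\otimes d_M+\eta_V+\epsilon$, which is precisely the expression in (\ref{Phie:write}) after grouping $1\otimes d_M+\epsilon=1\otimes(d_M+\epsilon)$ and writing $\eta$ for $\eta_V$.

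The main bookkeeping is the decomposition $\eta=\eta_V+t\epsilon$ of the Koszul datum for $s_{\epsilon}$ and the check that setting $t=1$ inside $\iota^{\ast}$ correctly realises the passage to the $\mathbb{Z}/2$-graded differential; no further terms can appear because $\Phi$ is already a two-sided quasi-inverse of $\Psi$ on the enlargement $\MF_{\qcoh}^{\C}$ (Remark~\ref{rmk:Zgrade}), and the quotient by $\cC_{\Crit(w)\times\{0\}}$ on the right of (\ref{equiv:Z2}) matches the quotient by $\MF_{\coh}^{\C}(V^{\vee}\times\mathbb{A}^1,w_{\epsilon})_{V^{\vee}\times\{0\}}$ on the left by (\ref{Psi:supp}) and (\ref{Crit:supp}). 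Once this compatibility is in place, the rest is the explicit calculation sketched above.
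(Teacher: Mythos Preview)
Your proposal is correct and follows essentially the same approach as the paper's proof: both compose the three quasi-inverses from Proposition~\ref{prop:Koszul:Z2}, identify the Koszul factorization $\kK_{s_{\epsilon}}$ for $s_{\epsilon}=(s,0)$ as $\kK_s\otimes_{\mathbb{C}}\mathbb{C}[\epsilon,t]$ with differential augmented by $t\epsilon$, then restrict to $t\in\C$ and set $t=1$ via $\iota^{\ast}$ to pass to the $\mathbb{Z}/2$-grading. Your explicit decomposition $\eta_{\epsilon}=\eta_V+t\epsilon$ is exactly what the paper encodes by saying that $\mathbb{C}[\epsilon,t^{\pm1}]$ carries differential $t\epsilon$.
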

\begin{proof}
	By the construction of $\Psi_{\epsilon}$, 
	its quasi-inverse $\Phi_{\epsilon}$ is a descendant of the 
	composition 
		\begin{align*}
		\Dbc(\fU_{\epsilon}) \stackrel{\sim}{\to}
		\MF_{\coh}^{\C}(V^{\vee} \times \mathbb{A}^1, w_{\epsilon}) \to 
		\MF_{\coh}^{\C}(V^{\vee}\times \C, w_{\epsilon}) 
		\stackrel{\sim}{\to} \MF_{\coh}^{\mathbb{Z}/2}(V^{\vee}, w). 
	\end{align*}
	Here the first equivalence is 
	given by Theorem~\ref{thm:knoer}, 
	the second functor is given by the restriction 
	to the open subset
	$V^{\vee} \times \C \subset V^{\vee} \times \mathbb{A}^1$, 
	and the last equivalence is given in Lemma~\ref{lem:equivA}. 
	The above composition is given by 
	\begin{align*}
		M \mapsto ((\kK_s \otimes_{\mathbb{C}} \mathbb{C}[\epsilon, t^{\pm 1}])
		\otimes_{\oO_{\fU_{\epsilon}}}M)|_{t=1} 
		=\oO_{V^{\vee}} \otimes_{\oO_Y} M^{\mathbb{Z}/2}. 
	\end{align*}
	Here $t$ has degree two
	and $\mathbb{C}[\epsilon, t^{\pm 1}]$ has differential 
	$t\epsilon$. Under the above isomorphism, the differential on 
	$\oO_{V^{\vee}} \otimes_{\oO_Y} M^{\mathbb{Z}/2}$ is given by 
	$1 \otimes (d_{M}+\epsilon) +\eta$. 
	Therefore the lemma holds. 
	\end{proof}

	\begin{exam}\label{exam:2.5}
		In the setting of Example~\ref{exam:1}, 
		we have $\fU=\Spec \mathbb{C}[\epsilon_1]$, 
		$\fU_{\epsilon}=\Spec \mathbb{C}[\epsilon_1, \epsilon_2]$
		where $\deg(\epsilon_i)=-1$, and $\Crit(w_{\epsilon})=\mathbb{A}^2$. 
		The equivalence in Proposition~\ref{prop:Koszul:Z2}
		is an equivalence
		\begin{align*}
			\Psi_{\epsilon} \colon \MF_{\coh}^{\mathbb{Z}/2}(\mathbb{A}^1, 0)
			\stackrel{\sim}{\to} \Dbc(\fU_{\epsilon})/\cC_{\mathbb{A}^1 \times \{0\}}. 
			\end{align*}
		For $y \in \mathbb{A}^1$, 
		let $\eE_y$ be defined by
		\begin{align*}
			\eE_y \cneq \mathbb{C}[\epsilon_1, \epsilon_2]/(\epsilon_2+y\epsilon_1)
			\in \Dbc(\fU_{\epsilon}). 
			\end{align*}
		Then from (\ref{Phie:write}),
		we have the isomorphism in $\MF_{\coh}^{\mathbb{Z}/2}(\mathbb{A}^1, 0)$
		\begin{align*}
			\Phi_{\epsilon}(\eE_y)=(\mathbb{C}[\epsilon, t], d_{\mathbb{C}[\epsilon, t]}=(t-y)\epsilon)
			\cong (\oO_y, d_{\oO_y}=0)[1]. 
			\end{align*} 
		Therefore $\Psi_{\epsilon}(\oO_y)=\eE_y[1]$. 
		The object $\eE_y$ has singular support 
	$\mathbb{A}^1(y, 1) \subset \mathbb{A}^2$, 
	so in particular it is non-zero in 
	$\Dbc(\fU_{\epsilon})/\cC_{\mathbb{A}^1 \times \{0\}}$. 
		\end{exam}
	
	We can also compare with the $\C$-equivariant Koszul duality. 
	Let $i \colon \fU \to \fU_{\epsilon}$ 
	be the morphism induced by 
	$\Spec \mathbb{C} \to \Spec \mathbb{C}[\epsilon]$. 
	We have the following lemma: 
	\begin{lem}\label{lem:compare}
		The following diagram is commutative 
		\begin{align}\label{compare:commute}
			\xymatrix{
	\Dbc(\fU) \ar[r]^-{\Phi} \ar[d]_-{i_{\ast}} & \MF_{\coh}^{\C}(V^{\vee}, w) 
	\ar[d]^-{\mathrm{forg}} \\		
		\Dbc(\fU_{\epsilon})/\cC_{\Crit(w) \times \{0\})}
		\ar[r]^-{\Phi_{\epsilon}} & \MF_{\coh}^{\mathbb{Z}/2}(V^{\vee}, w). 
	}
			\end{align}
		Here $\mathrm{forg}$ is the functor forgetting the $\C$-equivariant structure. 
				\end{lem}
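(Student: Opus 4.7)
The plan is to verify commutativity by directly evaluating both compositions on an object $M \in \Dbc(\fU)$ and observing that the two resulting $\mathbb{Z}/2$-periodic factorizations coincide on the nose, so that naturality is automatic from the tensor-product formulas.

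First I would compute the upper-right route. By formula~(\ref{equiv:Phi}) we have
\begin{align*}
\Phi(M) = \kK_s \otimes_{\oO_{\fU}} M = \left(\oO_{V^{\vee}} \otimes_{\oO_Y} M,\; 1\otimes d_M + \eta\right),
\end{align*}
viewed as a $\C$-equivariant factorization of $w$. Applying $\mathrm{forg}$ retains the same underlying module and differential but now regarded only with its $\mathbb{Z}/2$-grading, so
\begin{align*}
\mathrm{forg}(\Phi(M)) = \left(\oO_{V^{\vee}} \otimes_{\oO_Y} M^{\mathbb{Z}/2},\; 1 \otimes d_M + \eta\right).
\end{align*}

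Next I would evaluate the lower-left route. The pushforward $i_{\ast} M$ is $M$ regarded as a dg $\oO_{\fU_{\epsilon}} = \oO_{\fU}[\epsilon]$-module via the augmentation $\epsilon \mapsto 0$; in particular the internal differential is still $d_M$ and $\epsilon$ acts as zero. Substituting into the explicit formula~(\ref{Phie:write}) from Lemma~\ref{lem:Phie} gives
\begin{align*}
\Phi_{\epsilon}(i_{\ast} M) = \left(\oO_{V^{\vee}} \otimes_{\oO_Y} M^{\mathbb{Z}/2},\; 1\otimes(d_M+\epsilon)+\eta\right),
\end{align*}
and since $\epsilon\cdot i_{\ast} M = 0$ the contribution of the $\epsilon$-term vanishes, yielding exactly the same factorization as above. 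This produces a canonical isomorphism $\Phi_{\epsilon}(i_{\ast} M) \cong \mathrm{forg}(\Phi(M))$, which is manifestly natural in $M$ because both sides are built from the same tensor-product construction with $\kK_s^{\mathbb{Z}/2}$.

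There is no substantial obstacle here; the only point requiring care is to confirm that the object $\Phi_{\epsilon}(i_{\ast} M)$ is well-defined on the quotient side, i.e.\ that no nontrivial identifications in $\Dbc(\fU_{\epsilon})/\cC_{\Crit(w)\times\{0\}}$ are required before applying the formula of Lemma~\ref{lem:Phie}. This is immediate since Lemma~\ref{lem:Phie} presents $\Phi_{\epsilon}$ as a descendant of a genuine dg functor from $\Dbc(\fU_{\epsilon})$ to $\MF_{\coh}^{\mathbb{Z}/2}(V^{\vee},w)$, so we may compute it on any representative in $\Dbc(\fU_{\epsilon})$ before passing to the quotient.
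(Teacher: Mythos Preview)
Your proof is correct and follows essentially the same approach as the paper: the paper's proof simply observes that $\epsilon$ acts trivially on $i_{\ast}M$ and invokes the explicit formula~(\ref{Phie:write}), which is exactly what you have spelled out in detail.
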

			\begin{proof}
				For $M \in \Dbc(\fU)$, the element $\epsilon \in \mathbb{C}[\epsilon]$ acts 
				trivially on $i_{\ast}M$. Therefore the 
				commutativity of (\ref{compare:commute}) is obvious from (\ref{Phie:write}). 
				\end{proof}
	
			As mentioned in~\cite[Proposition~2.1]{Tudor3}, the results of
		Proposition~\ref{prop:Koszul:Z2} and Lemma~\ref{lem:compare} imply 
		that we have the isomorphism of $K$-groups
		of triangulated categories of $\C$-equivariant factorizations and $\mathbb{Z}/2$-periodic 
		ones. Although we will not use the corollary below, we include it here 
		for an independent interest: 
		\begin{cor}\label{cor:Kgp}
			The forgetting functor induces the isomorphism of $K$-groups
		\begin{align*}
			\mathrm{forg} \colon 
			K(\MF_{\coh}^{\C}(V^{\vee}, w)) \stackrel{\cong}{\to} K(\MF_{\coh}^{\mathbb{Z}/2}(V^{\vee}, w)). 
			\end{align*}
		\end{cor}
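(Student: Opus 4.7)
The plan is to reduce, via the Koszul duality equivalences, to a $K$-theoretic statement about derived categories of coherent sheaves, and then apply Verdier localization in $K$-theory. By Proposition~\ref{prop:Koszul:Z2} combined with Lemma~\ref{lem:compare}, under the equivalences $\Psi$ and $\Psi_{\epsilon}$ the forgetting functor $\mathrm{forg}$ corresponds to the pushforward
\[ i_\ast \colon \Dbc(\fU) \to \Dbc(\fU_\epsilon)/\cC_{\Crit(w) \times \{0\}}. \]
Hence it suffices to show that $i_\ast$ induces an isomorphism on $K$-groups.

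First I will apply the Verdier localization $K$-theory sequence for $\cC_{\Crit(w) \times \{0\}} \hookrightarrow \Dbc(\fU_\epsilon)$, yielding the right-exact sequence
\begin{align*}
K(\cC_{\Crit(w) \times \{0\}}) \to K(\Dbc(\fU_\epsilon)) \to K(\Dbc(\fU_\epsilon)/\cC_{\Crit(w) \times \{0\}}) \to 0.
\end{align*}
Translating via Theorem~\ref{thm:knoer}, Proposition~\ref{prop:ssupp} and Lemma~\ref{lem:equivA}, and using the equivalence~(\ref{equiv:MF1}) from the proof of Proposition~\ref{prop:Koszul:Z2}, this becomes
\begin{align*}
K(\MF^{\C}_{\coh}(V^{\vee} \times \mathbb{A}^1, w_\epsilon)_{V^{\vee} \times \{0\}}) \xrightarrow{j_\ast} K(\MF^{\C}_{\coh}(V^{\vee} \times \mathbb{A}^1, w_\epsilon)) \to K(\MF^{\mathbb{Z}/2}_{\coh}(V^{\vee}, w)) \to 0,
\end{align*}
where $j \colon V^{\vee} \times \{0\} \hookrightarrow V^{\vee} \times \mathbb{A}^1$. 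By d\'evissage together with the equivalence~(\ref{Crit:supp}), the first term is identified with $K(\MF^{\C}_{\coh}(V^{\vee}, w))$. By $\mathbb{A}^1$-homotopy invariance, the middle term is also identified with $K(\MF^{\C}_{\coh}(V^{\vee}, w))$, via pullback along the projection $p_2 \colon V^{\vee} \times \mathbb{A}^1 \to V^{\vee}$.

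The key remaining step is to show that the composition
\begin{align*}
K(\MF^{\C}_{\coh}(V^{\vee}, w)) \xrightarrow{j_\ast} K(\MF^{\C}_{\coh}(V^{\vee} \times \mathbb{A}^1, w_\epsilon)) \xrightarrow{j^\ast} K(\MF^{\C}_{\coh}(V^{\vee}, w))
\end{align*}
vanishes. Because the normal bundle to $j$ is the trivial line bundle in the $\mathbb{A}^1$-direction, a Koszul resolution computation gives $j^\ast j_\ast F \cong F \oplus F[1]$, and hence $[j^\ast j_\ast F] = [F] - [F] = 0$ in $K$-theory. Since $j^\ast$ inverts the $\mathbb{A}^1$-invariance isomorphism (as $j^\ast p_2^\ast = \id$), this will yield the required vanishing, and then the localization sequence will imply that the forgetful functor induces an isomorphism on $K$-groups.

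The main obstacle will be establishing $\mathbb{A}^1$-homotopy invariance for $K$-theory of $\C$-equivariant factorization categories in the presence of the non-trivial potential $w_\epsilon$. Via Koszul duality this reduces to proving $K(\Dbc(\fU_\epsilon)) \cong K(\Dbc(\fU))$ for $\fU_\epsilon = \fU \times \Spec \mathbb{C}[\epsilon]$, which I expect to follow from a K\"unneth-type argument using the derived contractibility of $\Spec \mathbb{C}[\epsilon]$ encoded in the triangle $i_\ast \oO_\fU[1] \to \oO_{\fU_\epsilon} \to i_\ast \oO_\fU$ coming from the short exact sequence $0 \to i_\ast \oO_\fU[1] \xrightarrow{\cdot \epsilon} \oO_{\fU_\epsilon} \to i_\ast \oO_\fU \to 0$.
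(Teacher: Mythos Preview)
Your approach is structurally the same as the paper's: both reduce via Lemma~\ref{lem:compare} to showing that $i_\ast$ induces an isomorphism on $K$-groups, then invoke the Verdier localization sequence
\[
K(\cC_{\Crit(w)\times\{0\}})\to K(\Dbc(\fU_\epsilon))\to K(\Dbc(\fU_\epsilon)/\cC_{\Crit(w)\times\{0\}})\to 0,
\]
and show (a) the middle term agrees with $K(\Dbc(\fU))$ via $i_\ast$, and (b) the first map vanishes. Your $j^\ast j_\ast F\cong F\oplus F[1]$ computation is exactly Koszul dual to the paper's triangle $i_\ast i^\ast M[1]\to M\to i_\ast i^\ast M$, so step (b) is fine.

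The one genuine gap is your step (a). You correctly recognize that the ``$\mathbb{A}^1$-homotopy invariance'' reduces to $K(\Dbc(\fU))\cong K(\Dbc(\fU_\epsilon))$, but your proposed argument via the single triangle $i_\ast\oO_\fU[1]\to\oO_{\fU_\epsilon}\to i_\ast\oO_\fU$ only shows $[\oO_{\fU_\epsilon}]=0$; it does not by itself give surjectivity or injectivity of $i_\ast$ on $K$-groups. The paper handles this in one line by a cleaner and more standard fact: since $K(\Dbc(-))$ depends only on the classical truncation (via the heart $\Coh(t_0(-))$), and $t_0(\fU)=t_0(\fU_\epsilon)=\uU$, one gets
\[
K(\Coh(\uU))\stackrel{\cong}{\to}K(\Dbc(\fU))\stackrel{i_\ast}{\stackrel{\cong}{\to}}K(\Dbc(\fU_\epsilon)).
\]
This also makes your detour through the $\MF$ side (and the d\'evissage claim for factorization categories supported on $V^\vee\times\{0\}$, which would itself need justification) unnecessary: staying on the $\Dbc$ side throughout, as the paper does, avoids both issues. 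Incidentally, note that $i^\ast M$ is \emph{not} bounded for general $M\in\Dbc(\fU_\epsilon)$ (e.g.\ $M=i_\ast\oO_\fU$), which is why the paper restricts the triangle argument to $M\in\cC_{\Crit(w)\times\{0\}}$; your formulation on the $\MF$ side sidesteps this, but only because you have already assumed the d\'evissage identification.
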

	\begin{proof}
		By Lemma~\ref{lem:compare}, it is enough to show 
		that the morphism induced by the left vertical arrow in (\ref{compare:commute})
		\begin{align*}
			i_{\ast} \colon K(\Dbc(\fU)) \to K(\Dbc(\fU_{\epsilon})/\cC_{\Crit(W) \times \{0\}})
			\end{align*}
		is an isomorphism. 
		Since $\Dbc(\fU_{\epsilon})$ and $\cC_{\Crit(w) \times \{0\}}$ are idempotent 
		complete, we have the exact sequence (see the argument of~\cite[Lemma~1.10]{PavShin})
		\begin{align}\label{exact:K}
			K(\cC_{\Crit(w) \times \{0\}}) \to K(\Dbc(\fU_{\epsilon})) \to K(\Dbc(\fU_{\epsilon})/\cC_{\Crit(W) \times \{0\}}) \to 0. 
			\end{align}
		Since $\fU$ and $\fU_{\epsilon}$ have isomorphic classical truncation $\uU$, 
		we have the isomorphisms
		\begin{align*}
			K(\Coh(\uU)) \stackrel{\cong}{\to} K(\Dbc(\fU)) \stackrel{\cong}{\to} K(\Dbc(\fU_{\epsilon})) 
			\end{align*}
		where the last isomorphism is induced by $i_{\ast}$. 
		Therefore it is enough to show that the first morphism in (\ref{exact:K}) is a zero map. 
		The subcategory 
		$\cC_{\Crit(w) \times \{0\}}$ is split generated by  
		$F \boxtimes C[\epsilon]$ for $F\in \Dbc(\fU)$, so in particular any 
		object $M \in C_{\Crit(w) \times \{0\}}$
		 satisfies $i^{\ast}M \in \Dbc(\fU)$. 
		Then from the triangle 
		\begin{align*}
			i_{\ast}i^{\ast}M[1] \to M \to i_{\ast}i^{\ast}M
			\end{align*}
		in $\Dbc(\fU_{\epsilon})$, we conclude $[M]=0$
		in $K(\Dbc(\fU_{\epsilon}))$.		
		\end{proof}

	By taking the ind-completion of the equivalence in Proposition~\ref{prop:Koszul:Z2}, 
	we have the following corollary: 
	\begin{cor}\label{cor:Z2ind}
		The equivalence (\ref{equiv:Z2}) extends to the equivalence
		\begin{align*}
			\Psi_{\epsilon} \colon 
			\MF_{\qcoh}^{\mathbb{Z}/2}(V^{\vee}, w) \stackrel{\sim}{\to}
			\Ind \Dbc(\fU_{\epsilon})/\Ind \cC_{\Crit(w)\times \{0\}}. 
			\end{align*}
		\end{cor}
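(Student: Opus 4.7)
The plan is to simply repeat the proof of Proposition~\ref{prop:Koszul:Z2} at the ind-completion level, replacing coherent with quasi-coherent categories and using the ind-completion version of each ingredient. All three tools used there admit such an upgrade: Lemma~\ref{lem:equivA} is already stated for both $\coh$ and $\qcoh$, the localization equivalence (\ref{equiv:supporZ}) is stated for both variants, and the key Koszul duality (Theorem~\ref{thm:knoer}) has the ind-version (\ref{ind:Psi}) together with the singular-support compatibility (\ref{Psi:supp}) in Proposition~\ref{prop:ssupp}.

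Concretely, first I would apply Lemma~\ref{lem:equivA} in the $\qcoh$ case to obtain
\begin{align*}
\iota^{\ast} \colon \MF_{\qcoh}^{\C}(V^{\vee} \times \C, w_{\epsilon}) \stackrel{\sim}{\to} \MF_{\qcoh}^{\mathbb{Z}/2}(V^{\vee}, w).
\end{align*}
Next, I would invoke (\ref{equiv:supporZ}) for the open immersion $V^{\vee} \times \C \hookrightarrow V^{\vee} \times \mathbb{A}^1$ with complement $V^{\vee} \times \{0\}$, which gives an equivalence
\begin{align*}
\MF_{\qcoh}^{\C}(V^{\vee} \times \mathbb{A}^1, w_{\epsilon})/\MF_{\qcoh}^{\C}(V^{\vee} \times \mathbb{A}^1, w_{\epsilon})_{V^{\vee} \times \{0\}} \stackrel{\sim}{\to} \MF_{\qcoh}^{\C}(V^{\vee} \times \C, w_{\epsilon}).
\end{align*}

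Then I would apply the ind-version (\ref{ind:Psi}) of Theorem~\ref{thm:knoer} to the derived zero locus $\fU_{\epsilon}$ and the function $w_{\epsilon}$ on $V^{\vee} \times \mathbb{A}^1$ to obtain
\begin{align*}
\Psi \colon \MF_{\qcoh}^{\C}(V^{\vee} \times \mathbb{A}^1, w_{\epsilon}) \stackrel{\sim}{\to} \Ind \Dbc(\fU_{\epsilon}).
\end{align*}
The second equivalence in Proposition~\ref{prop:ssupp} with $Z = \Crit(w_{\epsilon}) \cap (V^{\vee} \times \{0\}) = \Crit(w) \times \{0\}$, together with (\ref{Crit:supp}) in its qcoh form, identifies the kernel subcategory $\MF_{\qcoh}^{\C}(V^{\vee} \times \mathbb{A}^1, w_{\epsilon})_{V^{\vee} \times \{0\}}$ with $\Ind \cC_{\Crit(w) \times \{0\}}$. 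Passing to Verdier quotients and composing the chain of equivalences yields the desired statement.

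The one genuine thing to check, rather than reference, is that the three tools above remain valid in the $\qcoh$ setting, in particular that the localization sequence (\ref{equiv:supporZ}) and the support identification of Proposition~\ref{prop:ssupp} carry over; this is the expected mild obstacle. For (\ref{equiv:supporZ}), the qcoh form is recorded in~\cite[Theorem~1.10]{MR3366002}. For the support identification, (\ref{Psi:supp}) is stated in Proposition~\ref{prop:ssupp} in both coherent and ind-coherent forms, so this is immediate. Once these are granted, the construction of $\Psi_{\epsilon}$ extends verbatim to the ind-completion and restricts, on compact objects, to the equivalence (\ref{equiv:Z2}) of Proposition~\ref{prop:Koszul:Z2}.
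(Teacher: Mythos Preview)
Your argument is correct, but it takes a different route from the paper. You rerun the proof of Proposition~\ref{prop:Koszul:Z2} step by step in the quasi-coherent setting, upgrading each ingredient (Lemma~\ref{lem:equivA}, the localization (\ref{equiv:supporZ}), Theorem~\ref{thm:knoer}, Proposition~\ref{prop:ssupp}) to its $\qcoh$/ind form. The paper instead bypasses all of this and simply ind-completes the already established equivalence (\ref{equiv:Z2}): it identifies $\MF_{\qcoh}^{\mathbb{Z}/2}(V^{\vee}, w)$ with $\Ind \MF_{\coh}^{\mathbb{Z}/2}(V^{\vee}, w)$ via compact generation, and identifies $\Ind \Dbc(\fU_{\epsilon})/\Ind \cC_{\Crit(w)\times \{0\}}$ with $\Ind\bigl(\Dbc(\fU_{\epsilon})/\cC_{\Crit(w) \times \{0\}}\bigr)$ using \cite[Proposition~3.2.7]{TocatDT}, so the result drops out immediately from Proposition~\ref{prop:Koszul:Z2}. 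The paper's approach is shorter and highlights the general principle that ind-completion of an equivalence of small categories yields an equivalence of the large ones; your approach has the virtue of being self-contained and not relying on the nontrivial identification of the ind-quotient with the quotient of ind-categories.
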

	\begin{proof}
		Since $\MF_{\qcoh}^{\mathbb{Z}/2}(V^{\vee}, w)$ is compactly 
		generated by compact objects $\MF_{\coh}^{\mathbb{Z}/2}(V^{\vee}, w)$
		(see~\cite[Proposition~3.15]{MR3270588}), 
		we have the equivalence
		\begin{align*}
			\Ind \MF_{\coh}^{\mathbb{Z}/2}(V^{\vee}, w) \stackrel{\sim}{\to}
			\MF_{\qcoh}^{\mathbb{Z}/2}(V^{\vee}, w). 
			\end{align*}
		We also have the natural equivalence (see~\cite[Proposition~3.2.7]{TocatDT})
		\begin{align*}
			\Ind \Dbc(\fU_{\epsilon})/\Ind \cC_{\Crit(w)\times \{0\}}
			\stackrel{\sim}{\to} \Ind \left( \Dbc(\fU_{\epsilon})/\cC_{\Crit(w) \times \{0\}}  \right). 
			\end{align*}
		The corollary now follows by taking the ind-completion of the equivalence (\ref{equiv:Z2}). 
		\end{proof}
	
	Let $Z \subset \Crit(w)$ be a closed subset. 
	We define $Z_{\epsilon} \subset \Crit(w) \times \mathbb{A}^1$
	to be 
	\begin{align*}
		Z_{\epsilon} \cneq \C(Z \times \{1\}) \cup (\Crit(w) \times \{0\}). 
		\end{align*}
	Note that $Z_{\epsilon}$ is a conical closed subset in 
	$\Crit(w) \times \mathbb{A}^1$. 
	\begin{prop}\label{prop:Z2:supp}
		The equivalence in Proposition~\ref{prop:Koszul:Z2}
		descends to the equivalence 
		\begin{align}\label{equiv:descend}
			\Psi_{\epsilon} \colon \MF_{\coh}^{\mathbb{Z}/2}(V^{\vee} \setminus Z, w)\stackrel{\sim}{\to}
			\Dbc(\fU_{\epsilon})/\cC_{Z_{\epsilon}}. 
			\end{align}
				\end{prop}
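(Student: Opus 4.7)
The plan is to descend the equivalence of Proposition~\ref{prop:Koszul:Z2} along an open restriction, so that the statement reduces to the singular support compatibility of $\C$-equivariant Koszul duality already recorded in Proposition~\ref{prop:ssupp}. Throughout I will treat $Z$ as $\C$-invariant with respect to the fiberwise weight-two action on $V^{\vee}$, which is the setting in which both sides of (\ref{equiv:descend}) admit their natural interpretation and in which Lemma~\ref{lem:equivA} can be applied to $V^{\vee}\setminus Z$.

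First, by Lemma~\ref{lem:equivA} applied to the smooth $\C$-invariant open subscheme $V^{\vee}\setminus Z\subset V^{\vee}$, I identify
\[
\MF^{\mathbb{Z}/2}_{\coh}(V^{\vee}\setminus Z, w)\simeq \MF^{\C}_{\coh}((V^{\vee}\setminus Z)\times \C, w_{\epsilon}).
\]
Viewing $(V^{\vee}\setminus Z)\times \C$ as an open subscheme of $V^{\vee}\times \mathbb{A}^1$ with closed complement $W=(Z\times \mathbb{A}^1)\cup (V^{\vee}\times \{0\})$, the restriction equivalence (\ref{equiv:supporZ}) yields
\[
\MF^{\C}_{\coh}(V^{\vee}\times \mathbb{A}^1, w_{\epsilon})/\MF^{\C}_{\coh}(V^{\vee}\times \mathbb{A}^1, w_{\epsilon})_{W}\stackrel{\sim}{\to}\MF^{\C}_{\coh}((V^{\vee}\setminus Z)\times \C, w_{\epsilon}).
\]
By (\ref{Crit:supp}), one may replace $W$ with $W\cap \Crit(w_{\epsilon})=(Z\times \mathbb{A}^1)\cup (\Crit(w)\times \{0\})$, where I use $Z\subset \Crit(w)$. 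For $\C$-invariant $Z$ this set coincides with $Z_{\epsilon}$, since $\C(Z\times \{1\})=Z\times \C$ and $Z\times \{0\}\subset \Crit(w)\times \{0\}$. Applying the $\C$-equivariant Koszul duality $\Psi$ of Theorem~\ref{thm:knoer} together with its singular support refinement (Proposition~\ref{prop:ssupp}) to the conical closed subset $Z_{\epsilon}\subset \Crit(w_{\epsilon})$ produces
\[
\Psi\colon \MF^{\C}_{\coh}(V^{\vee}\times \mathbb{A}^1, w_{\epsilon})_{Z_{\epsilon}}\stackrel{\sim}{\to}\cC_{Z_{\epsilon}}\subset \Dbc(\fU_{\epsilon}),
\]
and passing to Verdier quotients stitches the above equivalences into the claimed $\Psi_{\epsilon}\colon \MF^{\mathbb{Z}/2}_{\coh}(V^{\vee}\setminus Z, w)\stackrel{\sim}{\to}\Dbc(\fU_{\epsilon})/\cC_{Z_{\epsilon}}$, manifestly compatible with Proposition~\ref{prop:Koszul:Z2} through the tautological open restriction functors.

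The main subtlety I expect is the set-theoretic identification $\C(Z\times \{1\})\cup (\Crit(w)\times \{0\})=(Z\times \mathbb{A}^1)\cup (\Crit(w)\times \{0\})$, which hinges on $Z$ being conical under the weight-two $\C$-action; once this is in place, the rest is a formal diagram chase of Verdier quotients of equivalences already established. The only other minor points to verify are the compatibility of the support subcategory with the open restriction functor, and that the whole composition of equivalences agrees with the restriction of $\Psi_{\epsilon}$ from Proposition~\ref{prop:Koszul:Z2}, both of which are built into the constructions.
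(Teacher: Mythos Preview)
Your argument is correct under the added hypothesis that $Z$ is $\C$-invariant, and in that case it is essentially the paper's proof: both reduce to Lemma~\ref{lem:equivA} and Proposition~\ref{prop:ssupp} applied to $\fU_{\epsilon}$, then pass to Verdier quotients.

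However, the statement as written in the paper does \emph{not} assume $Z$ is conical---just before Proposition~\ref{prop:Z2:supp} one takes an arbitrary closed subset $Z\subset\Crit(w)$, and this generality is used in Definition~\ref{def:Z2DT}, where $\nN^{\rm{ss}}$ need not be $\C$-invariant. Your approach of re-applying Lemma~\ref{lem:equivA} directly to $V^{\vee}\setminus Z$ requires $V^{\vee}\setminus Z$ to carry the $\C$-action, hence $Z$ conical; and your identification $Z_{\epsilon}=(Z\times\mathbb{A}^1)\cup(\Crit(w)\times\{0\})$ likewise fails for non-conical $Z$.

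The paper avoids this by a slightly different bookkeeping: rather than applying Lemma~\ref{lem:equivA} to the open complement, it applies Lemma~\ref{lem:equivA} once to all of $V^{\vee}$ (as already done in the proof of Proposition~\ref{prop:Koszul:Z2}) and then tracks how the \emph{support subcategories} transform under the isomorphism $\iota$ of Lemma~\ref{lem:quotient}. The point is that under $\iota\colon[V^{\vee}/\mu_2]\stackrel{\cong}{\to}[(V^{\vee}\times\C)/\C]$, the closed substack $[Z/\mu_2]$ corresponds to the orbit $\C(Z\times\{1\})$ for \emph{any} closed $Z$, and this yields
\[
\Psi_{\epsilon}\colon \MF_{\coh}^{\mathbb{Z}/2}(V^{\vee},w)_{Z}\stackrel{\sim}{\to}\cC_{Z_{\epsilon}}/\cC_{\Crit(w)\times\{0\}}
\]
directly. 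One then passes to quotients and uses (\ref{equiv:supporZ}) on the $\mathbb{Z}/2$-periodic side. To fix your write-up for general $Z$, replace the step ``apply Lemma~\ref{lem:equivA} to $V^{\vee}\setminus Z$'' by ``restrict the equivalence of Lemma~\ref{lem:equivA} (for $A=V^{\vee}$) to support subcategories along $\iota^{-1}(\C(Z\times\{1\}))=Z$''; the rest of your argument goes through unchanged.
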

	\begin{proof}
	The closed substack 
	$[Z/\mu_2] \subset [V^{\vee}/\mu_2]$ 
	corresponds to 
	$\C(Z \times \{1\}) \subset [(V^{\vee} \times \C)/\C]$ under the 
	isomorphism in Lemma~\ref{lem:quotient}.
	It follows that the equivalence in Lemma~\ref{lem:equivA} restricts to the equivalence 
	\begin{align*}
		\MF_{\coh}^{\C}(V^{\vee} \times \C, w_{\epsilon})_{\C(Z \times \{1\})} \stackrel{\sim}{\to}
		\MF_{\coh}^{\mathbb{Z}/2}(V^{\vee}, w)_{Z}. 
		\end{align*} 	
		The equivalence (\ref{equiv:MF1}) restricts to the equivalence 
		\begin{align*}
			\MF_{\coh}^{\C}(V^{\vee} \times \mathbb{A}^1, w_{\epsilon})_{Z_{\epsilon}}/
				\MF_{\coh}^{\C}(V^{\vee} \times \mathbb{A}^1, w_{\epsilon})_{\Crit(w) \times \{0\}}
				\stackrel{\sim}{\to}
					\MF_{\coh}^{\C}(V^{\vee} \times \C)_{\C(Z \times \{1\})}. 
			\end{align*}
		By (\ref{Psi:supp}), the equivalence (\ref{equiv:MF2}) restricts to the equivalence 
		\begin{align*}
			\MF_{\coh}^{\C}(V^{\vee} \times \mathbb{A}^1, w_{\epsilon})_{Z_{\epsilon}}/
		\MF_{\coh}^{\C}(V^{\vee} \times \mathbb{A}^1, w_{\epsilon})_{\Crit(w) \times \{0\}}
		\stackrel{\sim}{\to}
		\cC_{Z_{\epsilon}}/\cC_{\Crit{w} \times \{0\}}. 	
			\end{align*}
		By combining the above equivalences, the equivalence in Proposition~\ref{prop:Koszul:Z2}
		restricts to the equivalence
		\begin{align*}
			\Psi_{\epsilon} \colon 
			\MF_{\coh}^{\mathbb{Z}/2}(V^{\vee}, w)_Z \stackrel{\sim}{\to}
			\cC_{Z_{\epsilon}}/\cC_{\Crit(w) \times \{0\}}. 
			\end{align*}
		By taking the Verdier quotients and using (\ref{equiv:supporZ}), the equivalence in Proposition~\ref{prop:Koszul:Z2}
		descends to the equivalence (\ref{equiv:descend}). 
		\end{proof}
	
	\subsection{Comparison of two Koszul dualities}
	We compare the equivalences in Proposition~\ref{prop:Z2dual} and 
	Corollary~\ref{cor:Z2ind}. 
	Let $\Upsilon$ be the functor 
	\begin{align*}
		\Upsilon \colon \mathrm{Ho}(\oO_{\fU_{\epsilon}}\modu^{\mathbb{Z}})
		\to \mathrm{Ho}(\oO_{\fU}\modu^{\mathbb{Z}/2}), \ 
		(M, d_M) \mapsto (M^{\mathbb{Z}/2}, d_M+\epsilon). 
		\end{align*}
	Here $M^{\mathbb{Z}/2}=M^{\rm{even}} \oplus M^{\rm{odd}}$ is a 
	$\mathbb{Z}/2$-graded 
	$\oO_{\fU}$-module forgetting the 
	$\mathbb{C}[\epsilon]$-module structure, 
	with differential $d_M+\epsilon$. 
	Since the above functor preserves coacyclic objects, it induces the functor
	\begin{align}\label{funct:Ups}
		\Upsilon \colon 
		D^{\rm{co}}(\oO_{\fU_{\epsilon}}\modu^{\mathbb{Z}}) \to 
		D^{\rm{co}}(\oO_{\fU}\modu^{\mathbb{Z}/2}). 
		\end{align}
	Note that the source of the above functor is equivalent to 
	$\Ind \Dbc(\fU_{\epsilon})$ 
	by Remark~\ref{rmk:indcoh}.
	\begin{lem}\label{lem:Upsilon}
		The functor (\ref{funct:Ups}) descends to the functor  
	\begin{align}\label{funct:Ups2}
		\Upsilon \colon \Ind \Dbc(\fU_{\epsilon})/\Ind \cC_{\Crit(w) \times \{0\}}
		\to D^{\rm{co}}(\oO_{\fU}\modu^{\mathbb{Z}/2}). 
		\end{align}
	\end{lem}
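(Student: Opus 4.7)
The plan is to identify $\Upsilon$, after post-composition with the inverse Koszul equivalence $\Phi^{\mathbb{Z}/2}$ of Proposition~\ref{prop:Z2dual}, with the functor $\Phi_{\epsilon} \circ q$, where $\Phi_{\epsilon}$ is the equivalence from Lemma~\ref{lem:Phie} and $q \colon \Ind \Dbc(\fU_{\epsilon}) \to \Ind \Dbc(\fU_{\epsilon})/\Ind \cC_{\Crit(w) \times \{0\}}$ is the quotient functor. Once we exhibit a natural isomorphism
\begin{align*}
\Phi^{\mathbb{Z}/2} \circ \Upsilon \simeq \Phi_{\epsilon} \circ q,
\end{align*}
the descent (\ref{funct:Ups2}) is immediate: by construction $q$ annihilates $\Ind \cC_{\Crit(w)\times\{0\}}$, hence so does $\Phi_{\epsilon} \circ q$, and so for any $\pP \in \Ind \cC_{\Crit(w) \times \{0\}}$ we have $\Phi^{\mathbb{Z}/2}(\Upsilon(\pP)) \simeq 0$. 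Conservativity of the equivalence $\Phi^{\mathbb{Z}/2}$ then yields $\Upsilon(\pP) \simeq 0$ in $D^{\rm{co}}(\oO_{\fU}\modu^{\mathbb{Z}/2})$, which is exactly what we need.

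To verify the key identification, I would work at the level of dg-modules. For $M \in \oO_{\fU_{\epsilon}}\modu^{\mathbb{Z}}$, the functor $\Upsilon$ produces the $\mathbb{Z}/2$-graded $\oO_{\fU}$-module $(M^{\mathbb{Z}/2}, d_M + \epsilon)$, and then $\Phi^{\mathbb{Z}/2}$ gives
\begin{align*}
\kK_s^{\mathbb{Z}/2} \otimes_{\oO_{\fU}} (M^{\mathbb{Z}/2}, d_M + \epsilon).
\end{align*}
Using the presentation $\kK_s^{\mathbb{Z}/2} = \oO_{V^{\vee}} \otimes_{\oO_Y} \oO_{\fU}$ with right $\oO_{\fU}$-structure on the second factor and cancelling $\oO_{\fU} \otimes_{\oO_{\fU}} M^{\mathbb{Z}/2} = M^{\mathbb{Z}/2}$, the underlying $\mathbb{Z}/2$-graded $\oO_{V^{\vee}}$-module is $\oO_{V^{\vee}} \otimes_{\oO_Y} M^{\mathbb{Z}/2}$. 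Tracking the induced differential via the Leibniz rule, the internal Koszul contribution $1 \otimes d_{\oO_{\fU}}$ on $\kK_s^{\mathbb{Z}/2}$ is absorbed into the $\oO_{\fU}$-module structure of $M$ when tensoring over $\oO_{\fU}$, leaving $1 \otimes (d_M + \epsilon) + \eta$. This matches exactly the formula for $\Phi_{\epsilon}(M)$ in Lemma~\ref{lem:Phie}.

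The main obstacle is careful bookkeeping in this direct computation: one must handle the $\mathbb{Z}/2$-fold of the $\mathbb{Z}$-grading consistently (especially for the odd element $\epsilon$ and the sign conventions in the Koszul differential), verify that the identification is natural in $M$ rather than merely pointwise, and confirm compatibility of the construction with the passage to coderived categories. The last point reduces to noting that $\kK_s^{\mathbb{Z}/2}$ is a free $\oO_{\fU}$-module in each $\mathbb{Z}/2$-degree, so the tensor product $\kK_s^{\mathbb{Z}/2} \otimes_{\oO_{\fU}} -$ preserves coacyclic objects and needs no derived correction. Once these details are in place, the passage to coderived categories and the resulting descent are formal.
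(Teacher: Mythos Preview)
Your proof is correct but follows a different route from the paper. The paper argues directly on generators: it observes that $\cC_{\{0\}} \subset \Dbc(\Spec\mathbb{C}[\epsilon])$ consists of perfect complexes and is generated by $\mathbb{C}[\epsilon]$, so by the exterior-product generation of $\Ind\Dbc(\fU_{\epsilon})$ it suffices to check $\Upsilon(M\boxtimes\mathbb{C}[\epsilon])\cong 0$; this is immediate since $\Upsilon(M\boxtimes\mathbb{C}[\epsilon]) = \mathrm{Tot}(M^{\mathbb{Z}/2}\stackrel{\id}{\to}M^{\mathbb{Z}/2})$ is the cone on the identity. Your approach instead establishes the identity $\Phi^{\mathbb{Z}/2}\circ\Upsilon \simeq \Phi_{\epsilon}\circ q$ by comparing the explicit formulas from Lemma~\ref{lem:Phie} and Proposition~\ref{prop:Z2dual}, and then invokes conservativity of the equivalence $\Phi^{\mathbb{Z}/2}$. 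This is precisely the commutativity of diagram~(\ref{dia:MFind}) that the paper proves separately in Theorem~\ref{thm:Z2kos}; so your argument front-loads that step and extracts the lemma as a corollary, while the paper keeps the lemma self-contained via a one-line computation that avoids appealing to the full Koszul equivalence $\Phi^{\mathbb{Z}/2}$. Each is fine: the paper's version is lighter and more modular, yours is more conceptual and would remove a small redundancy in the overall exposition.
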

\begin{proof}
	Note that $\cC_{\{0\}} \subset \Dbc(\Spec \mathbb{C}[\epsilon])$ 
	coincides with the subcategory of perfect complexes on $\Spec \mathbb{C}[\epsilon]$ (see~\cite[Theorem~4.2.6]{MR3300415}), 
	which is generated by $\mathbb{C}[\epsilon]$. 
	Since $\Ind \Dbc(\fU_{\epsilon})$ is generated by exterior products 
	(see~\cite[Lemma~4.6.4]{MR3300415}), 
	it is enough to show that 
	the functor (\ref{funct:Ups}) sends objects 
	$M \boxtimes \mathbb{C}[\epsilon]$ for $M \in \Dbc(\fU)$
	to zero. 
	By the definition of $\Upsilon$, 
	we have 
	\begin{align*}
		\Upsilon(M \boxtimes \mathbb{C}[\epsilon])=
		\mathrm{Tot}(M^{\mathbb{Z}/2} \stackrel{\id}{\to}M^{\mathbb{Z}/2})
		\cong 0. 
		\end{align*}
	Therefore the lemma holds. 
	\end{proof}

As a summary of the results of this section, we have the following: 
\begin{thm}\label{thm:Z2kos}
	We have the commutative diagram of equivalences 
	\begin{align}\label{dia:MFind}
		\xymatrix{
\MF_{\qcoh}^{\mathbb{Z}/2}(V^{\vee}, w) \ar[d]_-{\Psi_{\epsilon}}^-{\sim} \ar@{=}[r] & 
	\MF_{\qcoh}^{\mathbb{Z}/2}(V^{\vee}, w)	\ar[d]^-{\Psi^{\mathbb{Z}/2}}_-{\sim} \\
 \Ind \Dbc(\fU_{\epsilon})/\Ind \cC_{\Crit(w) \times \{0\}} \ar[r]^-{\Upsilon}_-{\sim} & 
 D^{\rm{co}}(\oO_{\fU}\modu^{\mathbb{Z}/2}), 	
}
		\end{align}
	which restricts to the commutative diagram 
	\begin{align}\label{dia:MFind2}
		\xymatrix{
			\MF_{\coh}^{\mathbb{Z}/2}(V^{\vee}, w) \ar[d]_-{\Psi_{\epsilon}}^-{\sim} \ar@<-0.3ex>@{^{(}->}[r] & 
			\overline{\MF}_{\coh}^{\mathbb{Z}/2}(V^{\vee}, w)	\ar[d]^-{\Psi^{\mathbb{Z}/2}}_-{\sim} \\
			\Dbc(\fU_{\epsilon})/\cC_{\Crit(w) \times \{0\}} \ar@<-0.3ex>@{^{(}->}[r]^-{\Upsilon} & 
			\overline{D}^{\rm{abs}}(\oO_{\fU}\modu_{\rm{fg}}^{\mathbb{Z}/2}). 	
		}
	\end{align}
Here the horizontal arrows are fully-faithful with dense images. 
	\end{thm}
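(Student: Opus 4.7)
The plan is to assemble the diagram from the equivalences already constructed and verify commutativity at the level of (quasi-)inverses, then extract the restricted diagram by passing to compact objects.

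First, the two vertical arrows are equivalences by the work already done: $\Psi_{\epsilon}$ by Corollary~\ref{cor:Z2ind} together with Proposition~\ref{prop:Koszul:Z2}, and $\Psi^{\mathbb{Z}/2}$ by Proposition~\ref{prop:Z2dual}. The bottom arrow $\Upsilon$ is well-defined by Lemma~\ref{lem:Upsilon}. So the only substantive task in the top square is commutativity, after which the two-out-of-three principle forces $\Upsilon$ to be an equivalence.

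For commutativity, I would check the (equivalent) identity $\Phi^{\mathbb{Z}/2} \circ \Upsilon \simeq \Phi_{\epsilon}$ of quasi-inverses. Given $(M,d_M) \in \Ind \Dbc(\fU_{\epsilon})$ (represented by a $\mathbb{Z}$-graded dg $\oO_{\fU}[\epsilon]$-module), we have
\begin{align*}
\Phi^{\mathbb{Z}/2}(\Upsilon(M,d_M))
&= \kK_s^{\mathbb{Z}/2} \otimes_{\oO_{\fU}} (M^{\mathbb{Z}/2}, d_M+\epsilon) \\
&= \bigl(\oO_{V^{\vee}} \otimes_{\oO_Y} \oO_{\fU} \otimes_{\oO_{\fU}} M^{\mathbb{Z}/2},\ 1\otimes(d_M+\epsilon)+\eta\bigr) \\
&= \bigl(\oO_{V^{\vee}} \otimes_{\oO_Y} M^{\mathbb{Z}/2},\ 1\otimes(d_M+\epsilon)+\eta\bigr),
\end{align*}
which matches the explicit formula (\ref{Phie:write}) for $\Phi_{\epsilon}$. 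This shows commutativity of the top diagram, and by the two-out-of-three principle $\Upsilon$ is an equivalence.

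For the restricted diagram (\ref{dia:MFind2}), I would pass to compact objects on each corner. The compact objects of $\MF_{\qcoh}^{\mathbb{Z}/2}(V^{\vee},w)$ coincide with the idempotent completion of $\MF_{\coh}^{\mathbb{Z}/2}(V^{\vee},w)$, which is the content of~\cite[Proposition~3.15]{MR3270588}. Similarly, by~\cite[Section~3.11, Theorem~1]{Postwo}, the compact objects of $D^{\rm{co}}(\oO_{\fU}\modu^{\mathbb{Z}/2})$ are the idempotent completion of $D^{\rm{abs}}(\oO_{\fU}\modu^{\mathbb{Z}/2}_{\rm{fg}})$. On the bottom-left corner, compact objects of $\Ind\Dbc(\fU_{\epsilon})/\Ind \cC_{\Crit(w)\times\{0\}}$ are the idempotent completion of $\Dbc(\fU_{\epsilon})/\cC_{\Crit(w)\times\{0\}}$, using the standard fact that for a compactly generated subcategory of a compactly generated category, compact objects of the quotient are given by the Verdier quotient up to idempotent completion (cf.~\cite[Proposition~3.2.7]{TocatDT} and the arguments there). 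The horizontal inclusions into the idempotent completions are fully-faithful with dense images by construction. Finally, since the top diagram commutes and each arrow preserves compact objects (the vertical equivalences automatically, and $\Upsilon$ because the formula $(M,d_M)\mapsto (M^{\mathbb{Z}/2},d_M+\epsilon)$ visibly sends finitely generated objects to finitely generated objects), restricting yields the lower diagram.

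The only delicate point is bookkeeping of the differentials in the tensor product $\kK_s^{\mathbb{Z}/2}\otimes_{\oO_{\fU}}(-)$ when $\epsilon$ acts on the second factor — one must confirm that the $\oO_{\fU}$-linear action of $\epsilon$ contributes exactly the summand $1\otimes\epsilon$ in the total differential, with no cross-term against $\eta$, which is clear because $\eta \in V\otimes V^{\vee}$ involves only $V^{\vee}$-directions disjoint from the $\epsilon$-direction.
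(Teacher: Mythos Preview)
Your proposal is correct and follows essentially the same approach as the paper: both verify commutativity by checking that $\Phi^{\mathbb{Z}/2}\circ\Upsilon$ agrees with the explicit formula (\ref{Phie:write}) for $\Phi_{\epsilon}$, then deduce that $\Upsilon$ is an equivalence by two-out-of-three, and finally obtain the restricted diagram by passing to compact objects (the paper packages this last step as a direct citation of Corollary~\ref{cor:abs} and Proposition~\ref{prop:Koszul:Z2}, which amounts to the same thing).
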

	\begin{proof}
	The commutativity of the diagram (\ref{dia:MFind}) follows from Lemma~\ref{lem:Phie}
	together with the isomorphism 
		\begin{align*}
			(\oO_{V^{\vee}} \otimes_{\oO_Y} M^{\mathbb{Z}/2}, 1\otimes (d_{M}+\epsilon)
			+\eta) \cong \kK_{s}^{\mathbb{Z}/2} \otimes_{\oO_{\fU}}(M^{\mathbb{Z}/2}, d_{M}+\epsilon)
			\end{align*}
		for $(M, d_M) \in \Dbc(\fU_{\epsilon})$. 
		Then the functor (\ref{funct:Ups2}) is an equivalence 
		since the vertical arrows in the diagram (\ref{dia:MFind}) are equivalences 
		by Proposition~\ref{prop:Z2dual} and Corollary~\ref{cor:Z2ind}. 
		The commutative diagram (\ref{dia:MFind2}) follows from Corollary~\ref{cor:abs} and
		Proposition~\ref{prop:Koszul:Z2}. 
			\end{proof}
	
	\section{$\mathbb{Z}/2$-periodic DT categories for $(-1)$-shifted cotangents}
	In this section, we give a basic model of $\mathbb{Z}/2$-periodic 
	DT categories for $(-1)$-shifted cotangents over quasi-smooth 
	derived stacks, based on the $\mathbb{Z}/2$-periodic 
	Koszul duality in the previous section. We then compare it 
	with the $\C$-equivariant DT category 
	studied in~\cite{TocatDT}. 
	
	\subsection{Quasi-smooth derived stacks}\label{subsec:qsmooth}
		Below, we denote by $\mathfrak{M}$ a
	derived Artin stack over $\mathbb{C}$.
	This means that 
	$\mathfrak{M}$ is a contravariant
	$\infty$-functor from 
	the $\infty$-category of 
	affine derived schemes over $\mathbb{C}$ to 
	the $\infty$-category of 
	simplicial sets 
	\begin{align*}
		\mathfrak{M} \colon 
		dAff^{op} \to SSets
	\end{align*}
	satisfying some conditions (see~\cite[Section~3.2]{MR3285853} for details). 
	Here $dAff^{\rm{op}}$ is defined to be the
	$\infty$-category of 
	commutative simplicial $\mathbb{C}$-algebras, 
	which is equivalent to the $\infty$-category of 
	commutative differential graded 
	$\mathbb{C}$-algebras with non-positive degrees. 
	The classical truncation of $\mathfrak{M}$ is denoted by 
	\begin{align*}
		\mM \cneq t_0(\mathfrak{M}) \colon 
		Aff^{op} \hookrightarrow 
		dAff^{op} \to SSets
	\end{align*}
	where the first arrow is a natural functor 
	from the category of affine schemes
	to affine derived schemes. 

For an affine derived scheme $\mathfrak{U}=\Spec R$
for a cdga $R$, 
we set $D_{\qcoh}(\fU)_{\rm{dg}} \cneq D(R\modu^{\Gamma})_{\rm{dg}}$. 	
The
	dg-category of quasi-coherent sheaves on $\mathfrak{M}$
	is defined
	to be the limit in the $\infty$-category of dg-categories (see~\cite{MR3285853})
	\begin{align}\notag
		D_{\rm{qcoh}}(\mathfrak{M})_{\rm{dg}} \cneq
		\lim_{\mathfrak{U} \to \mathfrak{M}} D_{\rm{qcoh}}(\mathfrak{U})_{\rm{dg}}.
	\end{align}
Here the limit is taken for the 
	$\infty$-category of smooth morphisms 
	$\alpha \colon \fU \to \fM$ for affine 
	derived schemes $\fU$ with 
	1-morphisms given by 
	smooth morphisms $\fU \to \fU'$ commuting with maps to $\fM$. 
	The homotopy category of $D_{\rm{qcoh}}(\mathfrak{M})_{\rm{dg}}$
	is denoted by 
	$D_{\rm{qcoh}}(\mathfrak{M})$, which is a 
	triangulated category. We have the dg and
	triangulated subcategories
	\begin{align*}
		D^b_{\rm{coh}}(\mathfrak{M})_{\rm{dg}} \subset 
		D_{\rm{qcoh}}(\mathfrak{M})_{\rm{dg}}, \ 
		D^b_{\rm{coh}}(\mathfrak{M}) \subset D_{\rm{qcoh}}(\mathfrak{M})
	\end{align*}
	consisting of objects which have bounded coherent 
	cohomologies.

	A morphism of derived stacks $f \colon \fM \to \fN$ is 
	called \textit{quasi-smooth} if 
	$\mathbb{L}_f$ is perfect 
	such that for any point $x \to \mM$
	the restriction $\mathbb{L}_f|_{x}$ is 
	of cohomological 
	amplitude $[-1, 1]$.
	Here 
	$\mathbb{L}_f$ is the $f$-relative cotangent complex. 
	A derived stack 
	$\mathfrak{M}$ over $\mathbb{C}$
	is called \textit{quasi-smooth}
	if $\fM \to \Spec \mathbb{C}$ is quasi-smooth. 
	By~\cite[Theorem~2.8]{MR3352237}, 
	the quasi-smoothness of $\mathfrak{M}$ is equivalent to 
	that $\fM$ 
	is a 1-stack, 
	and 
	any point of $\mathfrak{M}$ lies 
	in the image of a $0$-representable 
	smooth morphism 
	$\alpha \colon \mathfrak{U} \to \mathfrak{M}$, 
	where $\mathfrak{U}$ is an affine derived scheme 
	obtained as a derived zero locus as 
	in (\ref{frak:U}). 
	In this case, we have 
	\begin{align*}
		\Dbc(\fM)_{\rm{dg}}=\lim_{\mathfrak{U} \stackrel{\alpha}{\to} \mathfrak{M}} 
		\Dbc(\fU)_{\rm{dg}}
	\end{align*}
	where the limit is taken for 
	the $\infty$-category $\iI$ of smooth morphisms
	$\alpha \colon \fU \to \fM$
	where $\fU$ is equivalent to an
	affine derived scheme of the form (\ref{frak:U}). 
	In this paper when we write 
	$\lim_{\mathfrak{U} \stackrel{\alpha}{\to} \mathfrak{M}}(-)$
	for a quasi-smooth $\fM$, 
	the limit is always taken for the $\infty$-category $\iI$
	as above. 

	Following~\cite[Definition~1.1.8]{MR3037900}, 
	a derived stack $\mathfrak{M}$ is called 
	\textit{QCA (quasi-compact and with affine automorphism groups)}
	if the following conditions hold:
	\begin{enumerate}
		\item $\mathfrak{M}$ is quasi-compact;
		\item The automorphism groups of its geometric points are affine;
		\item The classical inertia stack $I_{\mM} \cneq \Delta
		\times_{\mM \times \mM} \Delta$
		is of finite presentation over $\mM$. 
	\end{enumerate}
	Let $\fM$ be a quasi-smooth
	derived stack. 
	We denote by $\Ind \Dbc(\fM)_{\rm{dg}}$ the dg-category of its ind-coherent 
	sheaves (see~\cite{MR3136100})
	\begin{align}\notag
		\Ind \Dbc(\fM)_{\rm{dg}} \cneq \lim_{\fU \stackrel{\alpha}{\to}\fM}
		\Ind \Dbc(\fU)_{\rm{dg}},
	\end{align}
	where the limit is taken for the $\infty$-category $\iI$ 
	as above. 
	The QCA condition will be useful 
	since we have the following theorem:  
	\begin{thm}\emph{(\cite[Theorem~3.3.5]{MR3037900})}\label{thm:QCA}
		If $\fM$ is QCA, 
		then 
		$\Ind \Dbc(\fM)_{\rm{dg}}$ is compactly generated 
		with $(\Ind \Dbc(\fM)_{\rm{dg}})^{\rm{cp}}=\Dbc(\fM)_{\rm{dg}}$. 
		In particular, we have 
		\begin{align}\notag
			\Ind \Dbc(\fM)_{\rm{dg}}=\Ind(\Dbc(\fM)_{\rm{dg}}).
		\end{align} 
	\end{thm}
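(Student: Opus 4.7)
The statement is attributed to Drinfeld--Gaitsgory, so the proposed ``proof'' is really a reconstruction of the strategy one would follow if presented with the statement in isolation. The claim splits naturally into two parts: (i) $\Ind \Dbc(\fM)_{\rm{dg}}$ is compactly generated, and (ii) its subcategory of compact objects coincides with $\Dbc(\fM)_{\rm{dg}}$. Granting (i) and (ii), the ``in particular'' clause is formal: any compactly generated dg-category $\cC$ is equivalent to $\Ind(\cC^{\rm{cp}})$.

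The plan is to proceed by smooth descent. By definition, $\Ind \Dbc(\fM)_{\rm{dg}} = \lim_{\fU \to \fM} \Ind \Dbc(\fU)_{\rm{dg}}$, and for each affine chart $\fU$ of the form $(s=0) \subset Y$ the target is by construction compactly generated by $\Dbc(\fU)_{\rm{dg}}$. One therefore needs a mechanism that transports compact generators along smooth $0$-representable atlases to compact generators of the global limit category. The natural candidate is the left adjoint $\alpha_{\ast}$ to $\alpha^{!}$ (or equivalently $\alpha^{\ast}$ up to a twist, since $\alpha$ is smooth) for a smooth morphism $\alpha \colon \fU \to \fM$. First I would verify that $\alpha_{\ast}$ preserves coherent sheaves (so that compact generators of the chart produce coherent objects on $\fM$) and that it preserves compact objects in the ind-category (so that these objects remain compact after pushing forward). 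The QCA hypotheses are precisely what make this possible: quasi-compactness reduces the descent diagram to a finite one, affine automorphism groups let one use reductive/averaging arguments to build projectors onto summands of $\alpha_{\ast}\oO_{\fU}$, and finite presentation of the inertia $I_{\mM}$ ensures that pushforward by the representable-but-non-schematic diagonal behaves finitely, hence preserves coherence.

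Next I would verify (ii) by proving the two inclusions separately. For $\Dbc(\fM)_{\rm{dg}} \subset (\Ind \Dbc(\fM)_{\rm{dg}})^{\rm{cp}}$, the point is that $\Hom(\fF,-)$ commutes with filtered colimits whenever $\fF$ is coherent; smooth-locally this is standard, and the global statement follows because smooth pullback of a filtered colimit is the filtered colimit of the pullbacks, using QCA to bound the combinatorics of the cover. For the reverse inclusion, one uses that any compact object $\fF$ must, upon smooth pullback to each affine chart, land in the compact part $\Dbc(\fU)_{\rm{dg}}$; combined with descent and the finite presentation of inertia, this forces bounded finitely generated cohomology globally, hence coherence.

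The main obstacle is the second inclusion in (ii), namely showing that compactness in the global ind-category forces coherence. A priori, a compact object of a limit of compactly generated categories need not be compact in each factor, and one really uses the QCA hypotheses in a nontrivial way (especially the finite-presentation condition on the inertia, which fails for stacks like $B\GL_{\infty}$ where the result is known to break down). The expected route is to exhibit an explicit set of compact generators produced from the charts as in the previous paragraph, check that its closure under finite colimits and retracts is precisely $\Dbc(\fM)_{\rm{dg}}$, and then use Thomason's characterization of compact objects in a compactly generated dg-category as retracts of finite colimits of the chosen generators.
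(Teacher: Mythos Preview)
The paper does not give its own proof of this theorem; it is stated as a citation to Drinfeld--Gaitsgory \cite[Theorem~3.3.5]{MR3037900} and used as a black box. So there is nothing in the paper to compare your proposal against.

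As a reconstruction of the Drinfeld--Gaitsgory argument, your outline is broadly in the right spirit---one does produce compact generators by pushing forward from smooth affine charts, and the QCA hypotheses are indeed what make the pushforward well-behaved. A couple of small corrections: the phrase ``reductive/averaging arguments'' is misleading, since affine automorphism groups need not be reductive; what the affine-diagonal condition actually buys is that pushforward along representable affine morphisms preserves coherence and commutes with colimits. Also, your argument for the inclusion $(\Ind \Dbc(\fM))^{\rm cp} \subset \Dbc(\fM)$ via ``pullback to charts lands in compacts'' has a gap as stated: pullback $\alpha^{\ast}$ along a smooth cover does preserve compacts (because it has a continuous right adjoint), but you still need to know that an object whose pullback to each chart is coherent is itself coherent---this is a separate smooth-local characterization of coherence that should be invoked explicitly. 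The cleaner route, which is closer to what Drinfeld--Gaitsgory actually do, is to first exhibit an explicit set of compact generators (pushforwards of structure sheaves of charts, or more precisely objects built from these via the affine-diagonal hypothesis), observe that these lie in $\Dbc(\fM)$, and then use that compact objects are retracts of finite colimits of generators together with the fact that $\Dbc(\fM)$ is closed under retracts and finite colimits.
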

	
	Let $\mathfrak{M}$ be a quasi-smooth derived stack. 
	We denote by 
	$\Omega_{\mathfrak{M}}[-1]$ the $(-1)$-shifted
	cotangent derived stack of $\mathfrak{M}$
	\begin{align*}
		p \colon 
		\Omega_{\mathfrak{M}}[-1] \cneq 
		\Spec_{\mathfrak{M}}
		S(\mathbb{T}_{\mathfrak{M}}[1]) \to \mathfrak{M}. 
	\end{align*}
	Here $\mathbb{T}_{\mathfrak{M}} \in D^b_{\rm{coh}}(\mathfrak{M})$
	is the tangent complex of $\mathfrak{M}$, which 
	is dual to the cotangent complex 
	$\mathbb{L}_{\mathfrak{M}}$ of $\mathfrak{M}$. 
	The derived stack $\Omega_{\mathfrak{M}}[-1]$ 
	admits a natural $(-1)$-shifted symplectic 
	structure~\cite{MR3090262, Calaque}, 
	which induces the 
	d-critical structure~\cite{MR3399099}
	on its 
	classical truncation $\nN$ 
	\begin{align}\notag
		p_0 \colon \nN \cneq t_0(\Omega_{\mathfrak{M}}[-1])
		\to \mM.
	\end{align}

	Let $\fM_1$, $\fM_2$ be
	quasi-smooth derived stacks 
	with truncations $\mM_i=t_0(\fM_i)$. 
	Let $f \colon \fM_1 \to \fM_2$ be a morphism. 
	Then 
	the morphism 
	$f^{\ast}\mathbb{L}_{\fM_2} \to \mathbb{L}_{\fM_1}$
	induces the diagram
	\begin{align}\notag
		\xymatrix{
			t_0(\Omega_{\fM_1}[-1]) \ar[d] & 
			t_0(\Omega_{\fM_2}[-1]\times_{\fM_2}\fM_1)
			\ar[d] \ar[l]_-{f^{\diamondsuit}} 
			\ar[r]^-{f^{\spadesuit}} 
			\ar@{}[rd]|\square
			& 
			t_0(\Omega_{\fM_2}[-1]) \ar[d] \\
			\mM_1 & \mM_1 \ar@{=}[l] \ar[r]_-{f} & \mM_2. 
		}
	\end{align}
		The morphism $f$ is quasi-smooth if and only
		if $f^{\diamondsuit}$ is a closed immersion, 
		$f$ is smooth if and only if $f^{\diamondsuit}$ is an isomorphism
		(see~\cite[Lemma~3.1.2]{TocatDT}). 
	
Let us take a  conical closed substack
	\begin{align*}
		\zZ \subset \nN=t_0(\Omega_{\mathfrak{M}}[-1]). 
	\end{align*}
	Here $\zZ$ is called conical 
	if it is invariant under the fiberwise $\mathbb{C}^{\ast}$-action on $\nN \to \mM$. 
	Let $\alpha \colon \fU \to \fM$ be a smooth morphism
	such that $\fU$ is of the form (\ref{frak:U}). 
	We have the associated conical closed subscheme
	\begin{align}\notag
		\alpha^{\ast}\zZ \cneq 
		\alpha^{\diamondsuit} (\alpha^{\spadesuit})^{-1}(\zZ) \subset t_0(\Omega_{\mathfrak{U}}[-1])
		=\mathrm{Crit}(w).
	\end{align}
	Here 
	$w$ is given as in (\ref{func:w}). 
As in~\cite{MR3300415}, we define 
		\begin{align}\notag
			\cC_{\zZ, \rm{dg}} \cneq 
			\lim_{\fU \stackrel{\alpha}{\to} \fM}
			\cC_{\alpha^{\ast}\zZ, \rm{dg}}\subset D^b_{\rm{coh}}(\mathfrak{M})_{\rm{dg}}, \ 
			\Ind \cC_{\zZ, \rm{dg}} \cneq \lim_{\fU \stackrel{\alpha}{\to} \fM}
			\Ind \cC_{\alpha^{\ast}\zZ, \rm{dg}}\subset \Ind D^b_{\rm{coh}}(\mathfrak{M})_{\rm{dg}}, 
		\end{align}
	whose homotopy categories are denoted as $\cC_{\zZ}$, $\Ind \cC_{\zZ}$
	respectively. 
	
	\subsection{Definition of $\mathbb{Z}/2$-periodic 
		DT categories}
	For a quasi-smooth and QCA derived stack $\fM$, 
let us take an open substack $\nN^{\rm{ss}}$ and its complement 
	$\zZ$, 
	\begin{align*}
		\nN^{\rm{ss}} \subset \nN, \ 
		\zZ \cneq \nN\setminus \nN^{\rm{ss}}. 
	\end{align*}
In the case that $\nN^{\rm{ss}}$ is $\C$-invariant so that 
$\zZ$ is a conical closed substack, 
the $\C$-equivariant dg or triangulated DT categories were defined in~\cite{TocatDT}
as the Drinfeld or Verdier quotient 
\begin{align*}
	\dDT^{\C}(\nN^{\rm{ss}})_{\rm{dg}} \cneq 
	\Dbc(\fM)_{\rm{dg}}/\cC_{\zZ, \rm{dg}}, \ 
	\dDT^{\C}(\nN^{\rm{ss}}) \cneq \Dbc(\fM)/\cC_{\zZ}. 
	\end{align*}
	The idea of the above definition was based on the Koszul 
	duality equivalence in Theorem~\ref{thm:knoer}, 
which gives an interpretation of the above category as a 
gluing dg-categories of $\C$-equivariant
factorizations. 
	
We give a definition of $\mathbb{Z}/2$-periodic DT categories based 
on Proposition~\ref{prop:Z2:supp} instead of Theorem~\ref{thm:knoer}. 
	For a quasi-smooth derived stack $\fM$, 
	we set
	\begin{align*}
		\fM_{\epsilon} \cneq \fM \times \Spec \mathbb{C}[\epsilon]
		\end{align*}
	where $\deg(\epsilon)=-1$. 
	Then we have 
	\begin{align*}
		t_0(\Omega_{\fM_{\epsilon}}[-1]) =
		t_0(\Omega_{\fM}[-1]) \times 
		t_0(\Omega_{\Spec \mathbb{C}[\epsilon]}[-1])=
		\nN \times \mathbb{A}^1. 
		\end{align*}
	We define 
	\begin{align*}
		\zZ_{\epsilon} \cneq 
		\mathbb{C}^{\ast}(\zZ \times \{1\}) \cup 
		(\nN \times \{0\})
		\subset \nN \times \mathbb{A}^1.
		\end{align*}
	Here $\C$ acts on fibers of $\nN \times \mathbb{A}^1 \to \mM$ 
	by weight two. 
	Note that $\zZ_{\epsilon}$ is a conical closed substack, 
	though $\zZ$ may not be conical. 
	Note that if $\zZ$ is conical, 
	then $\zZ_{\epsilon}=(\zZ \times \mathbb{A}^1) \cup (\nN \times \{0\})$. 
	\begin{defi}\label{def:Z2DT}
		The $\mathbb{Z}/2$-periodic dg or triangulated DT categories 
			$\dDT^{\mathbb{Z}/2}(\nN^{\rm{ss}})_{\rm{dg}}$, 
		$\dDT^{\mathbb{Z}/2}(\nN^{\rm{ss}})$ are defined by 
		Drinfeld or Verdier quotients
		\begin{align*}
			\dDT^{\mathbb{Z}/2}(\nN^{\rm{ss}})_{\rm{dg}} \cneq 
			\Dbc(\fM_{\epsilon})_{\rm{dg}}/\cC_{\zZ_{\epsilon}, \rm{dg}}, \ 
			\dDT^{\mathbb{Z}/2}(\nN^{\rm{ss}})
			\cneq \Dbc(\fM_{\epsilon})/\cC_{\zZ_{\epsilon}}. 
			\end{align*}
		\end{defi}
	If $\fM$ is an affine derived scheme of the form (\ref{frak:U}), 
	then $\dDT^{\mathbb{Z}/2}(\nN^{\rm{ss}})$ is equivalent 
	to the derived category of $\mathbb{Z}/2$-periodic factorizations 
	by Proposition~\ref{prop:Z2:supp}. 
	In general, the triangulated category $\dDT^{\mathbb{Z}/2}(\nN^{\rm{ss}})$ is 
	$\mathbb{Z}/2$-periodic by the following lemma:

	\begin{lem}\label{lem:2period}
		The triangulated category 
		$\dDT^{\mathbb{Z}/2}(\nN^{\rm{ss}})$ is $\mathbb{Z}/2$-periodic. 
		\end{lem}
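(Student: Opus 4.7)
The plan is to reduce to an intermediate category and use the local matrix factorization description. Since $\nN \times \{0\} \subset \zZ_{\epsilon}$ by definition, we have $\cC_{\nN \times \{0\}} \subset \cC_{\zZ_{\epsilon}}$, so $\dDT^{\mathbb{Z}/2}(\nN^{\rm{ss}}) = \Dbc(\fM_{\epsilon})/\cC_{\zZ_{\epsilon}}$ is a Verdier quotient of the intermediate category $\Dbc(\fM_{\epsilon})/\cC_{\nN \times \{0\}}$. Any Verdier quotient of a $\mathbb{Z}/2$-periodic triangulated category is itself $\mathbb{Z}/2$-periodic (the isomorphism $[2] \simeq \id$ descends along the quotient functor), so it is enough to prove $\mathbb{Z}/2$-periodicity of this intermediate quotient.

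For this, I would exhibit a natural isomorphism $t \colon \id \stackrel{\sim}{\to} [2]$ of endofunctors on $\Dbc(\fM_{\epsilon})/\cC_{\nN \times \{0\}}$. The natural transformation $t$ is globally defined on $\Dbc(\fM_{\epsilon})$ as a \emph{Bott element}: in the language of Subsection~2.4, $t$ is the coordinate on the $\mathbb{A}^1$-factor of $\Crit(w_{\epsilon}) = \nN \times \mathbb{A}^1$, regarded as a central element of $\oO_{\Crit(w_{\epsilon})}$ which, through the Hochschild map, acts as a natural transformation $\id \to [2]$ on $\Dbc(\fM_{\epsilon})$. To check invertibility after the quotient, I would argue locally. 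Both $\Dbc(\fM_{\epsilon})_{\rm{dg}}$ and $\cC_{\nN \times \{0\}, \rm{dg}}$ are defined as limits over smooth covers $\alpha \colon \fU \to \fM$ by affine derived schemes of the form~(\ref{frak:U}), so invertibility reduces to the corresponding statement in each local quotient $\Dbc(\fU_{\epsilon})/\cC_{\Crit(w) \times \{0\}}$. On each such chart, Proposition~\ref{prop:Koszul:Z2} provides an equivalence $\Psi_{\epsilon} \colon \MF^{\mathbb{Z}/2}_{\coh}(V^{\vee}, w) \stackrel{\sim}{\to} \Dbc(\fU_{\epsilon})/\cC_{\Crit(w) \times \{0\}}$; the source carries a tautological $\mathbb{Z}/2$-periodicity, and inspection of the formula~(\ref{Phie:write}) for the quasi-inverse $\Phi_{\epsilon}$ shows that $t$ corresponds under $\Psi_{\epsilon}$ to this canonical periodicity isomorphism, the occurrences of $\epsilon$ in the differential $d_{M} + \epsilon$ being precisely what encode the periodicity.

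The main obstacle is the explicit matching of the globally defined Bott element $t$ with the canonical $\mathbb{Z}/2$-periodicity on the matrix factorization side. Conceptually the two should coincide---both arise from multiplication by the class dual to $\epsilon$---but the verification requires unwinding the Koszul duality construction behind $\Phi_{\epsilon}$ via~(\ref{Phie:write}) and ensuring that the local periodicity isomorphisms are functorial under restriction between charts, so that they assemble, in the limit defining $\Dbc(\fM_{\epsilon})_{\rm{dg}}$, into a single global natural isomorphism.
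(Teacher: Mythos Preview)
Your reduction to the intermediate quotient $\Dbc(\fM_\epsilon)/\cC_{\nN \times \{0\}}$ is correct, and the Bott-element strategy would work if completed, but the paper gives a much shorter global argument that sidesteps the obstacle you flag at the end. For any $\eE \in \Dbc(\fM_\epsilon)$ one writes down directly the distinguished triangle
\[
\eE[1] \longrightarrow i^{\ast}i_{\ast}\eE \longrightarrow \eE \longrightarrow \eE[2],
\]
coming from the projection $\fM_\epsilon \to \fM$; the middle term is identified with $(i_{\ast}\eE)\boxtimes \mathbb{C}[\epsilon]$. Since $\mathbb{C}[\epsilon]$ is perfect on $\bullet_\epsilon$, it lies in $\cC_{\{0\}}$, and compatibility of singular supports with exterior products (\cite[Lemma~4.6.4]{MR3300415}) then forces the middle term into $\cC_{\nN\times\{0\}}\subset \cC_{\zZ_\epsilon}$. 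Hence the connecting map $\eE\to\eE[2]$ is already an isomorphism in $\dDT^{\mathbb{Z}/2}(\nN^{\rm{ss}})$.

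The connecting morphism of this triangle \emph{is} your Bott element $t$: what you propose to verify by descending to local charts and matching against the matrix-factorization periodicity via Proposition~\ref{prop:Koszul:Z2} is established in one line by recognising the cone of $t$ globally as an exterior product with $\mathbb{C}[\epsilon]$. Your route would ultimately succeed---the matching you worry about does hold, since both transformations are induced by multiplication by the $\mathbb{A}^1$-coordinate dual to $\epsilon$---but carrying it out requires exactly the unwinding you describe as the ``main obstacle'', together with a check that the local periodicity isomorphisms glue. The paper's argument needs none of this: no charts, no Koszul duality, just the adjunction triangle and one singular-support computation.
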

	\begin{proof}
		Let $i \colon \fM_{\epsilon} \hookrightarrow \fM$ be the
		closed immersion induced by 
		the closed immersion 
		$\Spec \mathbb{C}[\epsilon] \hookrightarrow \Spec \mathbb{C}$. 
		Then for any $\eE \in \Dbc(\fM_{\epsilon})$ there is a distinguished triangle
		\begin{align}\label{dist:E}
			\eE[1] \to i^{\ast}i_{\ast}\eE \to \eE \to \eE[2]. 
			\end{align}
		The morphism $i$ is nothing but the projection 
		$\fM_{\epsilon} \to \fM$, 
		and $i^{\ast}i_{\ast}\eE=i_{\ast}\eE\boxtimes \mathbb{C}[\epsilon]$. 
		Since $\mathbb{C}[\epsilon] \in \cC_{\{0\}}$ in $\Dbc(\Spec \mathbb{C}[\epsilon])$, 
		we have 
		$i^{\ast}i_{\ast}\eE \in \cC_{\nN \times \{0\}}$ by~\cite[Lemma~4.6.4]{MR3300415}. 
		Therefore the morphism $\eE \to \eE[2]$ in the sequence (\ref{dist:E}) is an isomorphism 
		in $\dDT^{\mathbb{Z}/2}(\nN^{\rm{ss}})$. 
		\end{proof}
	
	Let $\wW \subset \mM$ be a closed substack, 
	and 
	$\fM_{\circ} \subset \fM$ be the open substack 
	whose classical truncation is $\mM \setminus \wW$. 
	We have the following closed substack
	\begin{align*}
		\zZ_{\circ} \cneq \zZ \setminus p_0^{-1}(\wW)
		\subset \nN_{\circ} \cneq t_0(\Omega_{\mathfrak{M}_{\circ}}[-1]).
	\end{align*}
	Note that $\nN_{\circ}=\nN \setminus p_0^{-1}(\wW)$, and 
	we have the following open immersion
	\begin{align}\label{open:N}
		\nN_{\circ}^{\rm{ss}} \cneq 
		\nN_{\circ} \setminus \zZ_{\circ} \hookrightarrow
		\nN \setminus \zZ =\nN^{\rm{ss}}. 
	\end{align}
	The following lemma is an analogue of~\cite[Lemma~3.2.9]{TocatDT}:
	\begin{lem}\label{lem:replace0}
		Suppose that $p_0^{-1}(\wW) \subset \zZ$, 
		so that the open immersion (\ref{open:N}) is an isomorphism. 
		Then 
		the restriction functor
		gives an equivalence
		\begin{align*}
			\mathcal{DT}^{\mathbb{Z}/2}
			(\nN^{\rm{ss}}) \stackrel{\sim}{\to}
			\mathcal{DT}^{\mathbb{Z}/2}
			(\nN_{\circ}^{\rm{ss}}). 
		\end{align*}
		\end{lem}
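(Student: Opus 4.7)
The plan is to reduce directly to the $\mathbb{C}^{\ast}$-equivariant analogue \cite[Lemma~3.2.9]{TocatDT} applied to $\fM_\epsilon$ in place of $\fM$. By Definition~\ref{def:Z2DT}, the category $\dDT^{\mathbb{Z}/2}(\nN^{\rm{ss}})$ is literally given by $\Dbc(\fM_\epsilon)/\cC_{\zZ_\epsilon}$, where $\zZ_\epsilon \subset \nN \times \mathbb{A}^1 = t_0(\Omega_{\fM_\epsilon}[-1])$ is conical; similarly $\dDT^{\mathbb{Z}/2}(\nN_\circ^{\rm{ss}})$ is the corresponding quotient for $\fM_{\circ,\epsilon} = \fM_\circ \times \Spec \mathbb{C}[\epsilon]$. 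The open immersion $\fM_{\circ,\epsilon} \hookrightarrow \fM_\epsilon$ has complement $\wW \subset \mM = t_0(\fM_\epsilon)$, and the restriction functor in the statement is the induced one on Verdier quotients.

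The essential step is to verify the hypothesis required in the thickened setting, namely $p_{0,\epsilon}^{-1}(\wW) \subset \zZ_\epsilon$, where $p_{0,\epsilon}\colon \nN \times \mathbb{A}^1 \to \mM$ is the projection. Since the $\mathbb{C}^{\ast}$-action on $\nN \to \mM$ is fibrewise, $p_0^{-1}(\wW)$ is conical, so
\[
p_{0,\epsilon}^{-1}(\wW) = p_0^{-1}(\wW) \times \mathbb{A}^1 = \mathbb{C}^{\ast}(p_0^{-1}(\wW) \times \{1\}) \cup (p_0^{-1}(\wW) \times \{0\}).
\]
By the standing assumption $p_0^{-1}(\wW) \subset \zZ$, the right-hand side is contained in $\mathbb{C}^{\ast}(\zZ \times \{1\}) \cup (\nN \times \{0\}) = \zZ_\epsilon$, as required.

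Having established this inclusion, the proof proceeds by the standard singular-support-quotient manipulation. The restriction $j_\epsilon^{\ast}$ exhibits $\Dbc(\fM_{\circ,\epsilon})$ as the Verdier quotient of $\Dbc(\fM_\epsilon)$ by the subcategory $\Dbc(\fM_\epsilon)_\wW$ of objects set-theoretically supported on $\wW$; each such object has singular support contained in $p_{0,\epsilon}^{-1}(\wW) \subset \zZ_\epsilon$, hence lies in $\cC_{\zZ_\epsilon}$. Conversely, if $j_\epsilon^{\ast}\fF \in \cC_{\zZ_{\circ,\epsilon}}$, then compatibility of singular support with open restriction gives $\Supp^{\rm{sg}}(\fF) \cap (\nN_\circ \times \mathbb{A}^1) \subset \zZ_{\circ,\epsilon}$, while the complementary part of $\Supp^{\rm{sg}}(\fF)$ is automatically contained in $p_0^{-1}(\wW) \times \mathbb{A}^1 \subset \zZ_\epsilon$, so $\fF \in \cC_{\zZ_\epsilon}$. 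Transitivity of Verdier quotients then yields the desired equivalence.

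The main delicate point is the compatibility of singular supports with the open restriction $\fM_{\circ,\epsilon} \hookrightarrow \fM_\epsilon$, used in both directions above; this is guaranteed by the étale-local definition of $\cC_{\zZ}$ reviewed at the end of Subsection~\ref{subsec:qsmooth}, since open immersions are smooth and therefore compatible with the formation of $\cC_{\alpha^{\ast}\zZ}$ in the limit over atlases.
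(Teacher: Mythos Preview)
Your proof is correct and follows essentially the same approach as the paper. The paper's proof simply verifies $p_0^{-1}(\wW)\times\mathbb{A}^1\subset\zZ_\epsilon$ and $(\zZ_\circ)_\epsilon=\zZ_\epsilon\setminus(p_0^{-1}(\wW)\times\mathbb{A}^1)$, then cites \cite[Lemma~3.2.9]{TocatDT} directly; you carry out the same reduction but additionally spell out the content of that cited lemma via transitivity of Verdier quotients.
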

	\begin{proof}
		Since $p_0^{-1}(\wW) \times \mathbb{A}^1 \subset \zZ_{\epsilon}$
			 and 
		$(\zZ_{\circ})_{\epsilon}=\zZ_{\epsilon} \setminus (p_0^{-1}(\wW) \times \mathbb{A}^1)$, 
		we can directly apply~\cite[Lemma~3.2.9]{TocatDT} to obtain the lemma. 
		\end{proof}
	
	\subsection{Comparison with $\C$-equivariant DT categories}
	For a conical closed substack 
	$\zZ \subset t_0(\Omega_{\fM}[-1])$
	with $\nN^{\rm{ss}}=\nN \setminus \zZ$, 
	we have both of $\C$-equivariant DT 
	category $\dDT^{\C}(\nN^{\rm{ss}})$ and 
	$\mathbb{Z}/2$-periodic one 
	$\dDT^{\mathbb{Z}/2}(\nN^{\rm{ss}})$. 
	In this subsection, we compare these 
	categories. 
	We first prove some lemmas. 
	
	\begin{lem}\label{lem:cgen0}
		Let $\fM$ be a quasi-smooth and QCA derived 
		stack and $\zZ \subset t_0(\Omega_{\fM}[-1])$ be a conical 
		closed substack. 
		Then $(\Ind \cC_{\zZ})^{\rm{cp}}=\cC_{\zZ}$. 
	\end{lem}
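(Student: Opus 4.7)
The plan is to prove the two containments separately, with the easy direction coming from the QCA theorem and the hard direction requiring a compact generation argument combined with Neeman's characterization of compact objects in a compactly generated triangulated category.

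For the easy direction $\cC_{\zZ} \subset (\Ind\cC_{\zZ})^{\mathrm{cp}}$, I would use Theorem~\ref{thm:QCA}, which under the QCA hypothesis identifies $\Dbc(\fM)$ with the compact objects of $\Ind\Dbc(\fM)$. Since $\cC_{\zZ} \subset \Dbc(\fM)$ by definition, every object of $\cC_{\zZ}$ is already compact in the larger category $\Ind\Dbc(\fM)$. It then suffices to observe that the fully faithful embedding $\Ind\cC_{\zZ} \hookrightarrow \Ind\Dbc(\fM)$ preserves filtered colimits: this can be checked smooth-locally using the local description $\Ind\cC_{\zZ,\dg} = \lim_{\fU \to \fM} \Ind\cC_{\alpha^{\ast}\zZ,\dg}$, where the preservation in each chart is standard for singular support subcategories of ind-coherent sheaves as developed in~\cite{MR3300415}. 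Compactness therefore transfers to $\Ind\cC_{\zZ}$.

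For the reverse containment, I would prove that $\Ind\cC_{\zZ}$ is compactly generated by $\cC_{\zZ}$, so that by Neeman's theorem the compact objects are precisely the thick closure of $\cC_{\zZ}$ in $\Ind\cC_{\zZ}$. Locally, compact generation is the Koszul dual of the fact that $\MF_{\qcoh}^{\C}(V^{\vee},w)_{Z}$ is compactly generated by $\MF_{\coh}^{\C}(V^{\vee},w)_{Z}$: this follows from Proposition~\ref{prop:ssupp} combined with~\cite[Proposition~3.15]{MR3270588}, together with the observation that compact objects of $\MF_{\qcoh}^{\C}(V^{\vee},w)$ lying in the support-restricted subcategory are again compact there. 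To globalize, I would combine the limit presentation of $\Ind\cC_{\zZ,\dg}$ with the QCA-theoretic gluing underlying Theorem~\ref{thm:QCA}; concretely, a generating set of compact objects of $\Ind\Dbc(\fM)$ can be arranged to consist of push-forwards from smooth affine charts, and restricting to those whose singular supports lie in $\zZ$ yields a generating set for $\Ind\cC_{\zZ}$ contained in $\cC_{\zZ}$.

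Having established compact generation by $\cC_{\zZ}$, Neeman's theorem gives $(\Ind\cC_{\zZ})^{\mathrm{cp}}$ equal to the thick (i.e.\ idempotent-complete, shift- and cone-stable) closure of $\cC_{\zZ}$ inside $\Ind\cC_{\zZ}$. But $\cC_{\zZ}$ is already thick in $\Dbc(\fM)$, being characterized by a closed singular support condition that is stable under taking summands, and $\Dbc(\fM)$ is itself idempotent complete as the compact part of $\Ind\Dbc(\fM)$; hence $\cC_{\zZ}$ equals its own thick closure in $\Ind\cC_{\zZ}$, completing the proof.

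The main obstacle will be the global compact generation step. The local version is essentially a repackaging of known results on matrix factorizations via Koszul duality, but passing from the smooth-local statement to the global one is not automatic: one needs the QCA hypothesis (in exactly the same way it is used in Theorem~\ref{thm:QCA}) to guarantee that compact generation descends through the atlas limit. In particular, without QCA the compact objects of $\Ind\Dbc(\fM)$ can fail to coincide with $\Dbc(\fM)$, and the analogous failure would occur for the singular support subcategories.
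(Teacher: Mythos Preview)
Your easy direction matches the paper's argument. The divergence is in the reverse inclusion $(\Ind\cC_{\zZ})^{\rm{cp}}\subset\cC_{\zZ}$, and there your approach has a genuine gap.

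You propose to first establish that $\Ind\cC_{\zZ}$ is compactly generated by $\cC_{\zZ}$ and then invoke Neeman. But global compact generation of $\Ind\cC_{\zZ}$ is \emph{not} a consequence of the QCA hypothesis alone, and the paper does not prove it here; on the contrary, it is imposed as an extra assumption in the very next results (Lemma~\ref{lem:cgen} and Theorem~\ref{thm:compareDT}). Your globalization sketch---take push-forwards from affine charts and keep those with singular support in $\zZ$---does not obviously produce a generating set for $\Ind\cC_{\zZ}$: limits of compactly generated categories need not be compactly generated, and the Drinfeld--Gaitsgory argument underlying Theorem~\ref{thm:QCA} is tailored to $\Ind\Dbc(\fM)$, not to an arbitrary singular-support subcategory. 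So the step you flag as ``the main obstacle'' is in fact unresolved, and if it went through it would render the compact-generation hypotheses later in the paper redundant.

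The paper sidesteps this entirely. For $\eE\in(\Ind\cC_{\zZ})^{\rm{cp}}$ and any smooth chart $\alpha\colon\fU\to\fM$, the pull-back $\alpha^{\ast}\colon\Ind\cC_{\zZ}\to\Ind\cC_{\alpha^{\ast}\zZ}$ has a \emph{continuous} right adjoint $\alpha_{\ast}^{\mathrm{ind}}$, so $\alpha^{\ast}$ preserves compact objects. Hence $\alpha^{\ast}\eE\in(\Ind\cC_{\alpha^{\ast}\zZ})^{\rm{cp}}$, and the local identification $(\Ind\cC_{\alpha^{\ast}\zZ})^{\rm{cp}}=\cC_{\alpha^{\ast}\zZ}$ from~\cite[Corollary~9.2.7, Corollary~9.2.8]{MR3300415} then gives $\alpha^{\ast}\eE\in\cC_{\alpha^{\ast}\zZ}$ for every chart, i.e.\ $\eE\in\cC_{\zZ}$. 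This argument never touches global compact generation; it only uses the local affine result plus the formal fact that a left adjoint with continuous right adjoint preserves compacts.
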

	\begin{proof}
		The lemma is a proved in the last part of~\cite[Theorem~7.2.2]{TocatDT}. 
		Since we have $\Dbc(\fM)=(\Ind \Dbc(\fM))^{\rm{cp}}$ by~\cite[Theorem~3.3.5]{MR3037900}, 
		we have 
		\begin{align*}
			\cC_{\zZ} \subset (\Ind \Dbc(\fM))^{\rm{cp}} \cap \Ind \cC_{\zZ} \subset 
			(\Ind \cC_{\zZ})^{\rm{cp}}.
		\end{align*}
	As for 
		the converse direction
		$(\Ind \cC_{\zZ})^{\rm{cp}} \subset \cC_{\zZ}$, 
		we take an object $\eE \in (\Ind \cC_{\zZ})^{\rm{cp}}$
		and 
		a smooth morphism $\alpha \colon \fU \to \fM$
		where $\fU$ is an affine derived scheme of the form (\ref{frak:U}). 
		Since
		the pull-back $\alpha^{\ast} \colon \Ind \cC_{\zZ} \to \Ind \cC_{\alpha^{\ast}\zZ}$
		admits a continuous right adjoint 
		$\alpha^{\mathrm{ind}}_{\ast} \colon \Ind \cC_{\alpha^{\ast}\zZ} \to \Ind \cC_{\zZ}$, 
		we have
		$\alpha^{\ast}\eE \in (\Ind \cC_{\alpha^{\ast}\zZ})^{\rm{cp}}=
			\cC_{\alpha^{\ast}\zZ}$. 
			Here the last identity follows 
			from~\cite[Corollary~9.2.7, Corollary~9.2.8]{MR3300415}. 
			Therefore we have 
			$\eE \in \cC_{\zZ}$. 
	\end{proof}
	
	\begin{lem}\label{lem:cgen}
		Let $\fM_i$ for $i=1, 2$ be quasi-smooth and QCA
		derived stacks, 
		and $\zZ_i \subset t_0(\Omega_{\fM_i}[-1])$ 
		be conical closed substacks. 
		Suppose that $\Ind \cC_{\zZ_i}$ are compactly generated. 
		Then the subcategories 
		\begin{align*}
			\Ind \cC_{p_1^{\ast}\zZ_1 \cap p_2^{\ast}\zZ_2}, \ 
			\Ind \cC_{p_1^{\ast}\zZ_1 \cup p_2^{\ast}\zZ_2}
			\subset \Ind \Dbc(\fM_1 \times \fM_2)
		\end{align*}
		are compactly 
		generated. Here 
		$p_i \colon \fM_1 \times \fM_2 \to \fM_i$ is the projection. 
	\end{lem}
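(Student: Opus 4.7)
The plan is to handle the intersection and union cases separately, reducing each to local considerations on smooth affine charts $\alpha_i \colon \fU_i \to \fM_i$ with $\fU_i$ of the form (\ref{frak:U}), where the Koszul duality equivalence (\ref{ind:Psi}) translates singular support conditions on ind-coherent sheaves into honest support conditions on $\C$-equivariant matrix factorizations. In this local picture, the familiar additivity and multiplicativity of supports are available.

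For the intersection, let $\{\eE_i^{\alpha}\}$ be compact generators of $\Ind \cC_{\zZ_i}$. By Lemma~\ref{lem:cgen0}, $\eE_i^{\alpha} \in \cC_{\zZ_i} \subset \Dbc(\fM_i)$, so the external products $\eE_1^{\alpha} \boxtimes \eE_2^{\beta}$ are compact in $\Ind \Dbc(\fM_1 \times \fM_2)$ by Theorem~\ref{thm:QCA}. A local computation on charts $\fU_1 \times \fU_2 \to \fM_1 \times \fM_2$, using that under (\ref{ind:Psi}) the external product of Koszul-dual matrix factorizations has support equal to the product of the two individual supports, shows that $\eE_1^{\alpha} \boxtimes \eE_2^{\beta}$ has singular support in $\zZ_1 \times \zZ_2 = p_1^{\ast}\zZ_1 \cap p_2^{\ast}\zZ_2$. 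Compact generation of $\Ind \cC_{p_1^{\ast}\zZ_1 \cap p_2^{\ast}\zZ_2}$ by these products would then follow from a Künneth-type identification
\begin{align*}
    \Ind \cC_{p_1^{\ast}\zZ_1 \cap p_2^{\ast}\zZ_2, \rm{dg}}
    \simeq \Ind \cC_{\zZ_1, \rm{dg}} \otimes \Ind \cC_{\zZ_2, \rm{dg}},
\end{align*}
obtained by restricting the Künneth decomposition of ind-coherent sheaves on QCA derived stacks to the singular-support subcategories; a tensor product of compactly generated dg-categories is compactly generated by external products of compact generators.

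For the union, the first step is to bootstrap from the intersection case: each $\Ind \cC_{p_i^{\ast}\zZ_i}$ is compactly generated, since it arises as the intersection case with the other factor taken to be all of $\nN_{3-i}$, using $\Ind \cC_{\nN_{3-i}} = \Ind \Dbc(\fM_{3-i})$ compactly generated by Theorem~\ref{thm:QCA}. It then suffices to show that the union of compact generators of $\Ind \cC_{p_1^{\ast}\zZ_1}$ and of $\Ind \cC_{p_2^{\ast}\zZ_2}$ generates $\Ind \cC_{p_1^{\ast}\zZ_1 \cup p_2^{\ast}\zZ_2}$ under colimits. This reduces to showing that any object with singular support in $p_1^{\ast}\zZ_1 \cup p_2^{\ast}\zZ_2$ sits in a distinguished triangle whose outer terms have singular supports in $p_1^{\ast}\zZ_1$ and $p_2^{\ast}\zZ_2$ respectively. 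Via the local Koszul translation, this is the classical support-decomposition statement for matrix factorizations: any factorization supported on a union $Z_1 \cup Z_2$ of closed conical subsets is an extension of factorizations supported on the individual pieces.

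The main obstacle will be the Künneth formula for singular supports, namely the identification $\Ind \cC_{p_1^{\ast}\zZ_1 \cap p_2^{\ast}\zZ_2} \simeq \Ind \cC_{\zZ_1} \otimes \Ind \cC_{\zZ_2}$. While transparent in the local Koszul-dual picture, globalizing to QCA derived stacks requires compatibility with the smooth-descent presentation used to define singular support in Subsection~\ref{subsec:qsmooth}; this should follow by combining the singular support formalism of \cite{MR3300415} with the Künneth formula for ind-coherent sheaves on QCA stacks from \cite{MR3037900}. Once this local-to-global compatibility is in place, both the intersection and union statements follow without further difficulty, the union case being a standard support-decomposition argument powered by the intersection case applied with one trivial factor.
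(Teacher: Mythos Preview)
Your proposal is correct and follows essentially the same route as the paper. The paper's proof establishes the K\"unneth identification $\Ind \cC_{p_1^{\ast}\zZ_1 \cap p_2^{\ast}\zZ_2, \rm{dg}} \simeq \Ind(\cC_{\zZ_1, \rm{dg}} \otimes \cC_{\zZ_2, \rm{dg}})$ by exactly the two ingredients you name---the descent/K\"unneth argument for QCA stacks from~\cite{MR3037900} (specifically Corollary~4.20 there) to reduce to charts, and the local product formula for singular supports~\cite[Lemma~4.6.4]{MR3300415}---and then treats the union by observing that $\Ind \cC_{p_1^{\ast}\zZ_1 \cup p_2^{\ast}\zZ_2}$ is generated by $\Ind \cC_{p_1^{\ast}\zZ_1}$ and $\Ind \cC_{p_2^{\ast}\zZ_2}$, each of which is compactly generated by the intersection case with a trivial factor, just as you propose. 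The only minor difference is that for the union the paper invokes generation directly rather than phrasing it as a support-decomposition triangle, but this is the same content.
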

	\begin{proof}
		We first show that $\Ind \cC_{\zZ_1 \cap \zZ_2}$ is compactly generated. 
		By the assumption and Lemma~\ref{lem:cgen0}, 
		$\Ind \cC_{\zZ_i}$ are compactly generated 
		with compact objects $\cC_{\zZ_i}$. Therefore 
		the argument of~\cite[Corollary~4.20]{MR3037900}
		shows that 
		\begin{align*}
			\Ind(\cC_{\zZ_1, \rm{dg}} \otimes \cC_{\zZ_2, \rm{dg}}) 
			\simeq 	\lim_{\fU_1 \stackrel{\alpha_1}{\to} \fM_1}
			\lim_{\fU_2 \stackrel{\alpha_2}{\to} \fM_2}
			\Ind (\cC_{\alpha_1^{\ast}\zZ, \rm{dg}} \otimes \cC_{\alpha_1^{\ast}\zZ. \rm{dg}}).
		\end{align*}
		By~\cite[Lemma~4.6.4]{MR3300415}, we have the equivalence 
		\begin{align*}
			\Ind (\cC_{\alpha_1^{\ast}\zZ, \rm{dg}} \otimes \cC_{\alpha_1^{\ast}\zZ, \rm{dg}})
			\stackrel{\sim}{\to} \Ind(\cC_{p_1^{\ast}\alpha_1^{\ast}\zZ_1 \cap p_2^{\ast}\alpha_2^{\ast}\zZ_2, \rm{dg}}). 
		\end{align*}
		It follows that we have the equivalence 
		\begin{align*}
			\Ind \cC_{p_1^{\ast}\zZ_1 \cap p_2^{\ast}\zZ_2, \rm{dg}}
			\simeq \Ind(\cC_{\zZ_1, \rm{dg}} \otimes \cC_{\zZ_2, \rm{dg}}). 
		\end{align*}
		So $\Ind \cC_{p_1^{\ast}\zZ_1 \cap p_2^{\ast}\zZ_2}$ is generated by compact objects 
		$\cC_{\zZ_1} \boxtimes  \cC_{\zZ_2}$. Then the 
		compact generation for $\Ind \cC_{p_1^{\ast}\zZ_1 \cup p_2^{\ast}\zZ_2}$
		holds since it is generated by 
		$\Ind \cC_{p_1^{\ast}\zZ_1}$ and $\Ind \cC_{p_2^{\ast}\zZ_2}$, and the latter categories are 
		compactly generated by the above argument. 
	\end{proof}

\begin{lem}\label{lem:Zblcp}
	In the setting of Lemma~\ref{lem:cgen0}, 
	the subcategory $\Ind \cC_{\zZ_{\epsilon}} \subset \Ind \Dbc(\fM_{\epsilon})$
	is compactly generated. 
	\end{lem}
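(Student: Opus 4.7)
The plan is to deduce this directly from Lemma~\ref{lem:cgen} applied to the factors of $\fM_\epsilon = \fM \times \Spec \mathbb{C}[\epsilon]$. First I would verify that $\fM_\epsilon$ is itself a quasi-smooth and QCA derived stack (the product of $\fM$ with the quasi-smooth affine $\Spec \mathbb{C}[\epsilon]$ inherits both properties), so the hypotheses of Lemma~\ref{lem:cgen} are in force with $\fM_1 = \fM$ and $\fM_2 = \Spec \mathbb{C}[\epsilon]$.

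The key observation is to rewrite $\zZ_\epsilon$ as a union pulled back from the two factors. Since $\zZ$ is conical, the explicit identification noted just before Definition~\ref{def:Z2DT} gives
\begin{align*}
\zZ_\epsilon = (\zZ \times \mathbb{A}^1) \cup (\nN \times \{0\}) = p_1^{\ast}\zZ \cup p_2^{\ast}\{0\},
\end{align*}
where $p_i$ denotes the projection from $\fM_\epsilon$ onto the $i$-th factor and we identify $t_0(\Omega_{\Spec \mathbb{C}[\epsilon]}[-1]) = \mathbb{A}^1$. Thus $\zZ_\epsilon$ is exactly of the form to which Lemma~\ref{lem:cgen} applies, with $\zZ_1 = \zZ$ and $\zZ_2 = \{0\} \subset \mathbb{A}^1$.

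It remains to check the compact generation hypothesis on each factor. For the first factor, $\Ind \cC_\zZ$ is compactly generated by assumption. For the second factor, as already recalled in the proof of Lemma~\ref{lem:Upsilon}, the subcategory $\cC_{\{0\}} \subset \Dbc(\Spec \mathbb{C}[\epsilon])$ coincides with perfect complexes and is generated by $\mathbb{C}[\epsilon]$ (via~\cite[Theorem~4.2.6]{MR3300415}), so $\Ind \cC_{\{0\}}$ is compactly generated. Applying Lemma~\ref{lem:cgen} then gives the compact generation of $\Ind \cC_{p_1^{\ast}\zZ \cup p_2^{\ast}\{0\}} = \Ind \cC_{\zZ_\epsilon}$. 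There is no real obstacle here; the entire content is the reduction to Lemma~\ref{lem:cgen}, whose hypotheses are automatically satisfied once $\zZ_\epsilon$ is recognized as a union of pulled-back conical subsets.
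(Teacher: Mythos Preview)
Your proposal is correct and matches the paper's own argument almost verbatim: write $\zZ_\epsilon = (\zZ \times \mathbb{A}^1) \cup (\nN \times \{0\}) = p_1^{\ast}\zZ \cup p_2^{\ast}\{0\}$, verify compact generation on each factor, and invoke Lemma~\ref{lem:cgen}. The only cosmetic difference is that you name $\mathbb{C}[\epsilon]$ as the compact generator of $\Ind \cC_{\{0\}}$ (consistent with the identification of $\cC_{\{0\}}$ with perfect complexes), whereas the paper names $\oO_0 = \mathbb{C}[\epsilon]/(\epsilon)$; either way the point is simply that $\Ind \cC_{\{0\}}$ is compactly generated.
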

	\begin{proof}
		Since $\zZ_{\epsilon}=(\zZ \times \mathbb{A}^1) \cup (\nN \times \{0\})$ and 
		$\Ind \cC_{\{0\}} \subset \Ind \Dbc(\bullet_{\epsilon})$
		is compactly generated by $\oO_0=\mathbb{C}[\epsilon]/(\epsilon)$, 
		the lemma follows from Lemma~\ref{lem:cgen}. 
		\end{proof}
	
	\begin{lem}\label{lem:DTind}
		In the setting of Lemma~\ref{lem:Zblcp}, we have an equivalence
		\begin{align*}
			\Ind \dDT^{\mathbb{Z}/2}(\nN^{\rm{ss}})_{\rm{dg}} \stackrel{\sim}{\to}
			\lim_{\fU \stackrel{\alpha}{\to} \fM}
			\Ind\left( \Dbc(\fU_{\epsilon})_{\rm{dg}}/\cC_{\alpha_{\epsilon}^{\ast}\zZ_{\epsilon}, \rm{dg}}  \right). 
			\end{align*}
		Here $\alpha_{\epsilon} =\alpha \times \id \colon \fU_{\epsilon} \to \fM_{\epsilon}$. 
		\end{lem}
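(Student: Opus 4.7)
The plan is to combine the compact-generation machinery of \cite{MR3037900} with a local-to-global application of \cite[Proposition~3.2.7]{TocatDT}. Note first that $\fM_{\epsilon}=\fM \times \Spec\mathbb{C}[\epsilon]$ inherits quasi-smoothness and the QCA property from $\fM$, so Theorem~\ref{thm:QCA} gives the compact generation of $\Ind \Dbc(\fM_{\epsilon})_{\rm{dg}}$ with compact objects $\Dbc(\fM_{\epsilon})_{\rm{dg}}$, while Lemma~\ref{lem:Zblcp} together with Lemma~\ref{lem:cgen0} gives the compact generation of $\Ind \cC_{\zZ_{\epsilon}, \rm{dg}}$ with compact objects $\cC_{\zZ_{\epsilon}, \rm{dg}}$.

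First I would establish the equivalence
\begin{align*}
\Ind \dDT^{\mathbb{Z}/2}(\nN^{\rm{ss}})_{\rm{dg}} \simeq \Ind \Dbc(\fM_{\epsilon})_{\rm{dg}}/\Ind \cC_{\zZ_{\epsilon}, \rm{dg}}
\end{align*}
as a global analogue of \cite[Proposition~3.2.7]{TocatDT}: the ind-completion of a Drinfeld quotient by a compactly generated thick ideal coincides with the Drinfeld quotient of the ind-completions. Next, using the limit presentations $\Ind \Dbc(\fM_{\epsilon})_{\rm{dg}} \simeq \lim_{\alpha} \Ind \Dbc(\fU_{\epsilon})_{\rm{dg}}$ and $\Ind \cC_{\zZ_{\epsilon}, \rm{dg}} \simeq \lim_{\alpha} \Ind \cC_{\alpha_{\epsilon}^{\ast}\zZ_{\epsilon}, \rm{dg}}$ from Subsection~\ref{subsec:qsmooth}, I would commute this quotient with the limit to get
\begin{align*}
\Ind \Dbc(\fM_{\epsilon})_{\rm{dg}}/\Ind \cC_{\zZ_{\epsilon}, \rm{dg}} \simeq \lim_{\fU \stackrel{\alpha}{\to}\fM} \left(\Ind \Dbc(\fU_{\epsilon})_{\rm{dg}}/\Ind \cC_{\alpha_{\epsilon}^{\ast}\zZ_{\epsilon}, \rm{dg}}\right),
\end{align*}
and then apply \cite[Proposition~3.2.7]{TocatDT} chart-by-chart to replace each factor on the right by $\Ind(\Dbc(\fU_{\epsilon})_{\rm{dg}}/\cC_{\alpha_{\epsilon}^{\ast}\zZ_{\epsilon}, \rm{dg}})$, yielding the claimed equivalence.

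The hard part will be justifying the commutation of the quotient with the smooth-descent limit. The correct viewpoint is that the quotient is a reflective localization of the presentable dg-category $\Ind \Dbc(\fM_{\epsilon})_{\rm{dg}}$ whose local objects are those right-orthogonal to the compact generators $\cC_{\zZ_{\epsilon}, \rm{dg}}$ of the ideal. Because the restriction functors $\alpha_{\epsilon}^{\ast}$ send $\cC_{\zZ_{\epsilon}, \rm{dg}}$ into $\cC_{\alpha_{\epsilon}^{\ast}\zZ_{\epsilon}, \rm{dg}}$ by the very definition of pull-back of singular supports, and admit continuous right adjoints on the ind-completions (giving the limit presentation via continuous functors), an object of $\lim_{\alpha} \Ind \Dbc(\fU_{\epsilon})_{\rm{dg}}$ is local iff each of its chart-wise restrictions is local. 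This identifies the reflective subcategories of local objects on both sides and delivers the required compatibility of quotient with limit.
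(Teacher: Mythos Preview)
Your approach is correct and lands in the same place as the paper, but the paper's proof is a single sentence: once Lemma~\ref{lem:cgen0} and Lemma~\ref{lem:Zblcp} guarantee that $\Ind\cC_{\zZ_{\epsilon}}$ is compactly generated with compact objects $\cC_{\zZ_{\epsilon}}$, the entire statement is a direct citation of \cite[Proposition~3.2.7]{TocatDT}. That proposition is not merely the chart-wise identity $\Ind(\Dbc(\fU_{\epsilon})/\cC_{\alpha_{\epsilon}^{\ast}\zZ_{\epsilon}}) \simeq \Ind\Dbc(\fU_{\epsilon})/\Ind\cC_{\alpha_{\epsilon}^{\ast}\zZ_{\epsilon}}$; it already packages the global statement, including the passage from $\Ind(\Dbc(\fM_{\epsilon})/\cC_{\zZ_{\epsilon}})$ to the limit over smooth charts. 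So the ``hard part'' you isolate --- commuting the quotient with the smooth-descent limit via the reflective-localization description --- is precisely the content of the cited proposition, and you are in effect re-proving it rather than invoking it. Your reflective-localization sketch is the right idea for how such a proposition is proved, but here it is redundant.
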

	\begin{proof}
		By Lemma~\ref{lem:cgen0} and Lemma~\ref{lem:Zblcp}, the category 
		$\Ind \cC_{\zZ_{\epsilon}}$ is compactly generated with 
		compact objects $\cC_{\zZ_{\epsilon}}$. Then 
		the lemma follows from~\cite[Proposition~3.2.7]{TocatDT}. 
		\end{proof}
	Let $\mathbb{C}[u^{\pm 1}]$ be 
	the dg algebra with $\deg(u)=2$ and zero differential. 
	We regard it as a dg category with one object. 
	The following result implies that one can 
	recover $\mathbb{Z}/2$-periodic DT category 
	from the $\C$-equivariant one up to idempotent completion. 
	
	\begin{thm}\label{thm:compareDT}
		Let $\fM$ be a 
		quasi-smooth and QCA derived stack and 
		$\zZ \subset t_0(\Omega_{\fM}[-1])$ be a conical 
		closed substack. 
		Suppose that $\Ind \cC_{\zZ}$ is compactly generated. 
		Then there is an equivalence
		\begin{align}\label{equiv:DT20}
			\Ind \mathcal{DT}^{\mathbb{Z}/2}(\nN^{\rm{ss}})_{\rm{dg}}
			\simeq \dR \underline{\hH om}(\mathbb{C}[u^{\pm 1}], 
			\Ind \mathcal{DT}^{\C}(\nN^{\rm{ss}})_{\rm{dg}}),	
		\end{align}
		which restricts to the equivalence 
		\begin{align}\label{equiv:DT2}
			\overline{\mathcal{DT}}^{\mathbb{Z}/2}(\nN^{\rm{ss}})_{\rm{dg}}
			\simeq \dR \underline{\hH om}(\mathbb{C}[u^{\pm 1}], 
			\Ind \mathcal{DT}^{\C}(\nN^{\rm{ss}})_{\rm{dg}})^{\rm{cp}}. 	
		\end{align}
		\end{thm}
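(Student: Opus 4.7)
The plan is to reduce to an affine local model $\fU=\Spec \rR(V\to Y, s)$ as in (\ref{frak:U}), identify both sides of (\ref{equiv:DT20}) locally with categories of matrix factorizations through the Koszul duality equivalences already established, and exploit the fact that passing from $\C$-equivariant to $\mathbb{Z}/2$-periodic matrix factorizations is precisely what the inner Hom $\dR\underline{\hH om}(\mathbb{C}[u^{\pm 1}],-)$ computes.

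Concretely, first fix a smooth morphism $\alpha\colon \fU\to \fM$ with $\fU$ of the form (\ref{frak:U}), set $Z\cneq \alpha^{\ast}\zZ \subset \Crit(w)$, and let $w$ be as in (\ref{func:w}). The ind-completed $\C$-equivariant Koszul duality (\ref{Psi:supp2}) gives
\[
\Ind\Dbc(\fU)/\Ind\cC_Z \simeq \MF^{\C}_{\qcoh}(V^{\vee}\setminus Z, w),
\]
while Proposition~\ref{prop:Z2:supp} combined with Corollary~\ref{cor:Z2ind} gives
\[
\Ind\Dbc(\fU_{\epsilon})/\Ind\cC_{Z_{\epsilon}} \simeq \MF^{\mathbb{Z}/2}_{\qcoh}(V^{\vee}\setminus Z, w).
\]
Because $\C$ acts with weight $2$ on the fibers of $V^{\vee}\to Y$, the subgroup $\mu_2\subset \C$ acts trivially, so pullback along $V^{\vee}\times B\mu_2=[V^{\vee}/\mu_2]\to [V^{\vee}/\C]$ realises a forgetful functor $\MF^{\C}_{\qcoh}(V^{\vee}\setminus Z, w)\to \MF^{\mathbb{Z}/2}_{\qcoh}(V^{\vee}\setminus Z, w)$ under which the degree-$2$ grading shift is sent to the identity. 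I expect this to induce the key identification
\[
\dR\underline{\hH om}(\mathbb{C}[u^{\pm 1}], \MF^{\C}_{\qcoh}(V^{\vee}\setminus Z, w)) \simeq \MF^{\mathbb{Z}/2}_{\qcoh}(V^{\vee}\setminus Z, w),
\]
since a $\mathbb{C}[u^{\pm 1}]$-module in a $\mathbb{Z}$-graded dg-category is precisely an object equipped with an invertible degree-$2$ automorphism, and inverting this shift collapses the $\mathbb{Z}$-grading to a $\mathbb{Z}/2$-grading. Composing these three equivalences produces the affine version of (\ref{equiv:DT20}).

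To globalise, I would use Lemma~\ref{lem:DTind} and its $\C$-equivariant analogue (which follows from~\cite[Proposition~3.2.7]{TocatDT}) to express both $\Ind\dDT^{\mathbb{Z}/2}(\nN^{\rm{ss}})_{\rm{dg}}$ and $\Ind\dDT^{\C}(\nN^{\rm{ss}})_{\rm{dg}}$ as $\infty$-limits over the $\infty$-category of smooth morphisms $\alpha\colon \fU\to \fM$ with $\fU$ as above. Since $\dR\underline{\hH om}(\mathbb{C}[u^{\pm 1}],-)$ is a right adjoint and therefore commutes with such limits, applying it term-by-term to the $\C$-equivariant limit and invoking the affine identification assembles into the equivalence (\ref{equiv:DT20}). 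For the compact version (\ref{equiv:DT2}), the hypothesis that $\Ind\cC_{\zZ}$ is compactly generated together with Lemma~\ref{lem:cgen0} and Lemma~\ref{lem:Zblcp} implies that $\Ind\cC_{\zZ_{\epsilon}}\subset\Ind\Dbc(\fM_{\epsilon})$ is compactly generated, so $\Ind\dDT^{\mathbb{Z}/2}(\nN^{\rm{ss}})_{\rm{dg}}$ is compactly generated with compact part the idempotent completion $\overline{\dDT}^{\mathbb{Z}/2}(\nN^{\rm{ss}})_{\rm{dg}}$. Restricting (\ref{equiv:DT20}) to compact objects on both sides then yields (\ref{equiv:DT2}).

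The main obstacle I anticipate is making the local identification $\dR\underline{\hH om}(\mathbb{C}[u^{\pm 1}], \MF^{\C}_{\qcoh}(\yY, w)) \simeq \MF^{\mathbb{Z}/2}_{\qcoh}(\yY, w)$ precise at the dg-categorical level. The geometric content is furnished by Lemma~\ref{lem:quotient} and Lemma~\ref{lem:equivA}, which relate $[V^{\vee}/\C]$ and $V^{\vee}\times B\mu_2$ through the intermediate model $[(V^{\vee}\times\C)/\C]$, but one still has to build an explicit dg-equivalence that intertwines the $\mathbb{C}[u^{\pm 1}]$-module structure with the $\mu_2$-equivariant structure, and verify its compatibility with the absolutely acyclic and coacyclic ideals as well as with passage to ind-completions and singular-support quotients. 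An alternative route via Theorem~\ref{thm:Z2kos}, proving $\dR\underline{\hH om}(\mathbb{C}[u^{\pm 1}], D^{\rm{co}}(\oO_{\fU}\modu^{\mathbb{Z}}))\simeq D^{\rm{co}}(\oO_{\fU}\modu^{\mathbb{Z}/2})$ directly, would be conceptually cleaner but incurs essentially the same bookkeeping with coacyclic classes.
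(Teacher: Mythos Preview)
Your proposal is correct and follows essentially the same architecture as the paper: reduce both sides to limits over affine charts via Lemma~\ref{lem:DTind} and its $\C$-equivariant analogue, establish the local equivalence, and pass to compact objects using compact generation of $\Ind\cC_{\zZ_\epsilon}$.

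The obstacle you flag---making the local identification precise---is exactly where the paper invests its effort, and it resolves it somewhat differently from your sketch. Rather than arguing directly that a $\mathbb{C}[u^{\pm 1}]$-module structure collapses the grading, the paper invokes To\"en's formula $\dR\underline{\hH om}(\mathbb{C}[u^{\pm 1}],\Ind\dD)\simeq \Ind(\dD\otimes\mathbb{C}[u^{\pm 1}])$ from the notation section to convert the inner Hom to a tensor product. It then identifies $\mathbb{C}[u^{\pm 1}]$ with the endomorphism dg-algebra of $\oO_0$ in $\Dbc(\bullet_\epsilon)_{\rm dg}/\cC_{\{0\},\rm dg}$ (via the Koszul equivalence $\MF^{\C}_{\coh}(\C,0)_{\rm dg}\simeq \Dbc(\bullet_\epsilon)_{\rm dg}/\cC_{\{0\},\rm dg}$), so that tensoring with $\mathbb{C}[u^{\pm 1}]$ is realised by the exterior product functor $\eE\mapsto \eE\boxtimes\oO_0$ into $\Dbc(\fU_\epsilon)_{\rm dg}/\cC_{\alpha_\epsilon^\ast\zZ_\epsilon,\rm dg}$. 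Full faithfulness is then checked on the matrix factorization side via~\cite[Lemma~3.52]{MR3270588}, and essential surjectivity after ind-completion follows because $\Ind\Dbc(\fU_\epsilon)$ is generated by exterior products and $\oO_0$ generates $\Dbc(\bullet_\epsilon)$. This route avoids having to track coacyclic classes through a direct comparison of $\mathbb{Z}$- and $\mathbb{Z}/2$-graded module categories, which is the bookkeeping you were worried about.
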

	\begin{proof}
		Similarly to Lemma~\ref{lem:DTind}, 
		we have the equivalence (see~\cite[Proposition~3.2.7]{TocatDT})
			\begin{align*}
				\Ind \dDT^{\C}(\nN^{\rm{ss}})_{\rm{dg}} \stackrel{\sim}{\to}
		\lim_{\fU \stackrel{\alpha}{\to} \fM}
		\Ind\left( \Dbc(\fU)_{\rm{dg}}/\cC_{\alpha^{\ast}\zZ, \rm{dg}}  \right). 
		\end{align*}
		Therefore the 
		right hand side of (\ref{equiv:DT20}) is 
		\begin{align*}
			\lim_{\fU \stackrel{\alpha}{\to} \fM}
			\dR \underline{\hH om}(\mathbb{C}[u^{\pm 1}], 
			\Ind\left( \Dbc(\fU)_{\rm{dg}}/\cC_{\alpha^{\ast}\zZ, \rm{dg}} \right) )
			\simeq 	\lim_{\fU \stackrel{\alpha}{\to} \fM}
			\Ind \left( (\Dbc(\fU)_{\rm{dg}}/\cC_{\alpha^{\ast}\zZ, \rm{dg}}) \otimes 
			\mathbb{C}[u^{\pm 1}]   \right). 
			\end{align*}
By Theorem~\ref{thm:knoer}, there is an equivalence
\begin{align*}
\MF^{\C}_{\coh}(\C, 0)_{\rm{dg}} \stackrel{\sim}{\to}\Dbc(\bullet_{\epsilon})_{\rm{dg}}/\cC_{\{0\}, \rm{dg}}
\end{align*}
where the $\C$-action on $\C$ in the left hand side is of weight two. 
The above functor sends 
$(\oO_{\C}, d_{\oO_{\C}}=0)$ to $\oO_0=\mathbb{C}[\epsilon]/(\epsilon)$, 
so we have the quasi-isomorphism
of dg-algebra
\begin{align*}
	\mathbb{C}[u^{\pm 1}] \stackrel{\sim}{\to} 
\Hom^{\ast}_{\Dbc(\bullet_{\epsilon})_{\rm{dg}}/\cC_{\{0\}, \rm{dg}}}(\oO_0, \oO_0). 
\end{align*}
Therefore there exists a natural functor
\begin{align}\label{fun:Cu}
(\Dbc(\fU)_{\rm{dg}}/\cC_{\alpha^{\ast}\zZ, \rm{dg}}) \otimes \mathbb{C}[u^{\pm 1}]
\to \Dbc(\fU_{\epsilon})_{\rm{dg}}/\cC_{\alpha_{\epsilon}^{\ast}\zZ_{\epsilon}, \rm{dg}}
\end{align}
sending $\eE$ to $\eE \boxtimes \oO_0$. 
Under the equivalence (\ref{Psi:supp2}), 
the above functor corresponds to 
the functor 
\begin{align*}
\MF^{\C}_{\coh}(V^{\vee} \setminus \alpha^{\ast}Z, w)_{\rm{dg}}\otimes \mathbb{C}[u^{\pm 1}]
\to \MF_{\coh}^{\C}((V^{\vee} \setminus \alpha^{\ast}Z)\times_{\C}\C, w_{\epsilon})_{\rm{dg}}
\end{align*}
sending $\pP$ to $\pP\boxtimes \oO_{\C}$. 
The above functor is fully-faithful by~\cite[Lemma~3.52]{MR3270588}, 
so the functor (\ref{fun:Cu}) is also fully-faithful. 
By taking the ind-completion, we obtain the fully-faithful 
functor
\begin{align}\label{fun:Cu2}
\Ind\left((\Dbc(\fU)_{\rm{dg}}/\cC_{\alpha^{\ast}\zZ, \rm{dg}}) \otimes \mathbb{C}[u^{\pm 1}] \right)
\to \Ind \left(\Dbc(\fU_{\epsilon})_{\rm{dg}}/\cC_{\alpha_{\epsilon}^{\ast}\zZ_{\epsilon}, \rm{dg}}\right). 
\end{align}
Since $\Ind \Dbc(\fU_{\epsilon})$ is generated by 
exterior products by~\cite[Proposition~6.3.4]{MR3701352}, 
and $\Dbc(\bullet_{\epsilon})$ is generated by 
$\oO_0$, 
the functor (\ref{fun:Cu2}) is essentially surjective, 
hence it is an equivalence. 
By taking the limit of the equivalence (\ref{fun:Cu2}) 
for all $\alpha \colon \fU \to \fM$
and using Lemma~\ref{lem:DTind}, 
we obtain the equivalence (\ref{equiv:DT20}). 

Any object in the subcategory
\begin{align*}
\dDT^{\mathbb{Z}/2}(\nN^{\rm{ss}})_{\rm{dg}} \subset \Ind 
\dDT^{\mathbb{Z}/2}(\nN^{\rm{ss}})_{\rm{dg}}
\end{align*}
is compact by~\cite[Proposition~3.2.7, Lemma~7.1.2]{TocatDT}, 
which generates $\Ind \dDT^{\mathbb{Z}/2}(\nN^{\rm{ss}})_{\rm{dg}}$
by Theorem~\ref{thm:QCA}.
Therefore we have 
\begin{align*}
\overline{\dDT}^{\mathbb{Z}/2}(\nN^{\rm{ss}})_{\rm{dg}} =
\left(  \Ind \dDT^{\mathbb{Z}/2}(\nN^{\rm{ss}})_{\rm{dg}} \right)^{\rm{cp}}.
\end{align*} 
By combining with the equivalence (\ref{equiv:DT20}), 
we obtain the equivalence (\ref{equiv:DT2}). 
		\end{proof}
	
	\section{$\mathbb{Z}/2$-periodic DT categories for local surfaces}
	In this section, we define 
	$\mathbb{Z}/2$-periodic DT categories for local 
	surfaces following basic model in Definition~\ref{def:Z2DT}.
	We then use Theorem~\ref{thm:compareDT} and results
	in~\cite{TocatDT} to prove some wall-crossing equivalences
	for $\mathbb{Z}/2$-periodic DT categories for
	one dimensional stable sheaves on local surfaces. 
	by push-forward. 
	\subsection{Derived moduli stacks of coherent sheaves on surfaces}
	Let $S$ be a smooth projective surface over $\mathbb{C}$. 
	We consider the 
	derived Artin stack
	\begin{align}\notag
		\mathfrak{Perf}_S \colon dAff^{op} \to SSets
	\end{align}
	which sends an affine derived scheme 
	$T$ to the $\infty$-groupoid of 
	perfect complexes on $T \times S$, 
	constructed in~\cite{MR2493386}. 
	We have the open substack
	\begin{align*}
		\mathfrak{M}_S \subset \mathfrak{Perf}_S
	\end{align*}
	corresponding to perfect complexes on $S$ 
	quasi-isomorphic to 
	coherent sheaves on $S$. 
	Since
	any object in $\Coh(S)$ is perfect as $S$ is smooth,
	the derived Artin stack $\mathfrak{M}_S$ is 
	the derived  
	moduli stack of objects in 
	$\Coh(S)$. 
	It is well-known that $\fM_S$ is a quasi-smooth derived Artin stack (see~\cite[Section~3.4]{TocatDT}).
	
	Let $N(S)$ be the numerical Grothendieck group of 
	$S$
	\begin{align}\notag
		N(S) \cneq K(S)/\equiv
	\end{align}
	where $F_1, F_2 \in K(S)$
	satisfies $F_1 \equiv F_2$ if $\ch(F_1)=\ch(F_2)$. 
	Then $N(S)$ is a finitely generated free abelian 
	group. 
	We have the decompositions into open and closed substacks
	\begin{align*}
		\mathfrak{M}_S=\coprod_{v \in N(S)} \mathfrak{M}_S(v), \ 
		\mM_S=\coprod_{v \in N(S)} \mM_S(v)
	\end{align*}
	where each component 
	corresponds to sheaves $F$
	with $[F]=v$. 
	
	Note that the automorphism 
	group of a sheaf $F$ on $S$
	contains a one dimensional torus 
	$\mathbb{C}^{\ast} \subset \Aut(F)$
	given by the scalar multiplication. 
	Therefore 
	the inertia stack $I_{\fM_S}$ of $\fM_S$
	admits an embedding 
	$(\mathbb{C}^{\ast})_{\fM_S} \subset I_{\fM_S}$. 
	As in~\cite[Subsection~3.2.4]{TocatDT}, 
	we have the 
	$\mathbb{C}^{\ast}$-rigidification 
	\begin{align}\label{C:rig}
		\mathfrak{M}_S(v) \to 
		\mathfrak{M}_S^{\mathbb{C}^{\ast}\mathchar`-\mathrm{rig}}(v), 
	\end{align}
which rigidifies the above $\C$-automorphisms, i.e. 
the automorphism group of $F$ in $\mathfrak{M}_S^{\mathbb{C}^{\ast}\mathchar`-\mathrm{rig}}(v)$
is $\Aut(F)/\C$.

	\subsection{Moduli stacks of compactly supported sheaves on local 
		surfaces}
	For a smooth projective surface $S$, 
	we consider its total space of the canonical line bundle: 
	\begin{align*}
		X=\mathrm{Tot}_S(\omega_S) \stackrel{\pi}{\to}S.
	\end{align*}
	Here $\pi$ is the projection. 
	We denote by 
	$\Coh_{\rm{cpt}}(X) \subset \Coh(X)$
	the subcategory of compactly supported coherent 
	sheaves on $X$. 
	We consider the 
	classical Artin stack
	\begin{align*}
		\mM_X : 
		Aff^{\rm{op}} \to Groupoid
	\end{align*}
	whose $T$-valued points for $T \in Aff$
	form the groupoid of $T$-flat families of 
	objects in 
	$\Coh_{\rm{cpt}}(X)$. 
	We have the decomposition into open and closed 
	substacks
	\begin{align*}
		\mM_X=\coprod_{v\in N(S)} \mM_X(v)
	\end{align*}
	where $\mM_X(v)$ corresponds to compactly 
	supported
	sheaves $F$ on $X$ with $[\pi_{\ast}F]=v$. 
	By pushing forward to $S$, we have the natural 
	morphism 
	\begin{align}\label{pi:ast}
		\pi_{\ast} \colon \mM_X(v) \to \mM_S(v), \ 
		F \mapsto \pi_{\ast}F. 
	\end{align}
	Moreover we have an isomorphism of stacks over $\mM_S(v)$ (see~\cite[Lemma~3.4.1]{TocatDT})
	\begin{align}\label{isom:etaM}
		\eta \colon 
		\mM_X(v) \stackrel{\cong}{\to} t_0(\Omega_{\mathfrak{M}_S(v)}[-1]). 
	\end{align}
	Similarly to (\ref{C:rig}), (\ref{isom:etaM}), 
	we also have the $\mathbb{C}^{\ast}$-rigidification
	and the isomorphism 
	\begin{align}\label{isom:rigid:S}
		\mM_X(v) \to \mM_X(v)^{\C\rig}, \ 
		\mM_X(v)^{\mathbb{C}^{\ast}\mathchar`-\mathrm{rig}} \stackrel{\cong}{\to}
		t_0
		(\Omega_{\mathfrak{M}_S(v)^{\C\rig}}[-1]).
	\end{align}

	\subsection{Definition of $\mathbb{Z}/2$-periodic 
		DT categories for local surfaces}\label{subsec:catDT:sigma}
	Let $A(S)_{\mathbb{C}}$ be the ample cone
	\begin{align*}
		A(S)_{\mathbb{C}} \cneq \{B+iH : H \mbox{ is ample}\}
		\subset \mathrm{NS}(S)_{\mathbb{C}}. 
	\end{align*}
	Then any element $\sigma=B+iH \in A(S)_{\mathbb{C}}$ 
	determines the $\sigma$-stability on $\Coh_{\rm{cpt}}(X)$ in 
	a standard way (see~\cite[Definition~3.4.3]{TocatDT}). 
	We have the open substacks
	\begin{align}\notag
		\mM_{X}^{\sigma \st}(v)
		\subset 
		\mM_{X}^{\sigma}(v)
		\subset \mM_X(v)
	\end{align}
	corresponding to $\sigma$-stable sheaves, $\sigma$-semistable sheaves, 
	respectively. 
	By the GIT construction of 
	the moduli stack $\mM_X^{\sigma}(v)$, it admits a good moduli 
	space (cf.~\cite[Section~1.2]{MR3237451})
	\begin{align}\label{gmoduli:M}
		\mM_X^{\sigma}(v) \to M_X^{\sigma}(v)
	\end{align}
	where $M_X^{\sigma}(v)$ is a quasi-projective 
	scheme whose closed points correspond to $\sigma$-polystable objects. 
When we have the equality 
$\mM_{X}^{\sigma \st}(v)
		=
		\mM_{X}^{\sigma}(v)$, 
		then 
	the morphism (\ref{gmoduli:M}) is a $\C$-gerbe
	so that 
	\begin{align*}
		M_{X}^{\sigma}(v) =
		\mM_{X}^{\sigma}(v)^{\mathbb{C}^{\ast} \rig}
	\end{align*}
	holds. 
	Note that it 
	admits a  
	$\mathbb{C}^{\ast}$-action induced by the fiberwise
	weight two  
	$\mathbb{C}^{\ast}$-action on $\pi \colon X \to S$. 
	
	The moduli stack of $\sigma$-semistable sheaves 
	$\mM_X^{\sigma}(v)$ is of finite 
	type, while $\mathfrak{M}_S(v)$ is not quasi-compact in general, so 
	in particular it is not QCA. 
	So we take a quasi-compact open derived substack
	$\mathfrak{M}_{S}(v)_{\rm{qc}} \subset \mathfrak{M}_S(v)$
	satisfying the condition  
	\begin{align}\label{take:open}
		\pi_{\ast}(\mM_{X}^{\sigma}(v)) \subset t_0(\mathfrak{M}_{S}(v)_{\rm{qc}}). 
	\end{align}
	Here $\pi_{\ast}$ is the morphism (\ref{pi:ast}). 
	Note that $\fM_S(v)_{\rm{qc}}$ is QCA. 
	By the isomorphism $\eta$ in (\ref{isom:etaM}), 
	we have the conical closed substack
	\begin{align*}
		\zZ^{\sigma\mathchar`-\rm{us}}
		\cneq t_0(\Omega_{\mathfrak{M}_S(v)_{\rm{qc}}}[-1])
		\setminus \eta(\mM_{X}^{\sigma}(v)) \subset t_0(\Omega_{\mathfrak{M}_S(v)_{\rm{qc}}}[-1]).
	\end{align*}
	By taking the $\mathbb{C}^{\ast}$-rigidification
	and  
	using the isomorphism (\ref{isom:rigid:S}), 
	we also have the conical closed substack
	\begin{align*}
		(\zZ^{\sigma \us})^{\mathbb{C}^{\ast} \rig}
		\subset t_0(\Omega_{\mathfrak{M}_S(v)^{\C\rig}_{\rm{qc}}}[-1]).
	\end{align*}
	We apply Definition~\ref{def:Z2DT}
	for $\fM=\mathfrak{M}_S(v)^{\C\rig}_{\rm{qc}}$
	and $\zZ=(\zZ^{\sigma \us})^{\mathbb{C}^{\ast} \rig}$
	to give the following definition:  
	\begin{defi}\label{def:period}
		Suppose that $\mM_X^{\sigma\st}(v)=\mM_X^{\sigma}(v)$ holds. 
		The $\mathbb{Z}/2$-periodic dg or triangulated 
		DT categories for $M_X^{\sigma}(v)$
		\begin{align}\label{def:DTM}
			\mathcal{DT}^{\mathbb{Z}/2}(M_X^{\sigma}(v))_{\dg}, \ 
			\dDT^{\mathbb{Z}/2}(M_X^{\sigma}(v))
		\end{align}
		are defined by setting 
		$\fM=\mathfrak{M}_S(v)^{\C\rig}_{\rm{qc}}$
		and $\zZ=(\zZ^{\sigma \us})^{\mathbb{C}^{\ast} \rig}$
		in Definition~\ref{def:Z2DT}. 
			\end{defi}
		\begin{rmk}\label{rmk:indepe}
			By Lemma~\ref{lem:replace0}, the categories (\ref{def:DTM})
			are independent of a choice of $\fM_S(v)_{\rm{qc}}$
			satisfying the condition (\ref{take:open}) up to equivalence. 
			\end{rmk}
	
	\subsection{Categorical wall-crossing for moduli spaces of one dimensional sheaves}
	We denote by 
	\begin{align*}
		\Coh_{\le 1}(X) \subset \Coh_{\rm{cpt}}(X)
	\end{align*}
	the  
	abelian subcategory consisting 
	of sheaves $F$ with $\dim \Supp(F) \le 1$. 
	We also denote by $N_{\le 1}(S)$ the subgroup of $N(S)$
	spanned by 
	sheaves $F \in \Coh_{\le 1}(S)$. 
	Note that we have an isomorphism
	\begin{align}\notag
		N_{\le 1}(S) \stackrel{\cong}{\to} 
		\mathrm{NS}(S) \oplus \mathbb{Z}, \ 
		F \mapsto (l(F), \chi(F))
	\end{align}
	where $l(F)$ is the fundamental one cycle of $F$. 
	Below we identify an element $v \in N_{\le 1}(S)$
	with $(\beta, n) \in \mathrm{NS}(S) \oplus \mathbb{Z}$
	by the above isomorphism. 
	We write $[F]=(\beta, n)$ for 
	$F \in \Coh_{\le 1}(S)$ by the above isomorphism. 
	For $\sigma=B+iH \in A(S)_{\mathbb{C}}$, 
	we set 
	\begin{align*}
			\mu_{\sigma}(F) \cneq
			\frac{n-B \cdot \beta}{H \cdot \beta} \in \mathbb{Q} \cup \{\infty\}. 
		\end{align*}
	Then $F$ is 
	$\sigma$-(semi)stable
	if and only
	for any subsheaf $0 \neq F' \subsetneq F$ we have 
	the inequality 
	$\mu_{\sigma}(F')<(\le) \mu_{\sigma}(F)$. 
	
	We fix a primitive element 
	$v=(\beta, n) \in N_{\le 1}(S)$
	such that $\beta>0$. 
	Here we write $\beta>0$ if $\beta=[C]$ for a non-zero 
	effective divisor $C$ on $S$. 
	For each decomposition 
	\begin{align*}
		v=v_1+v_2, \ 
		v_i=(\beta_i, n_i), \
		\beta_i>0
	\end{align*}
	we define 
	\begin{align*}
		W_{v_1, v_2}& \cneq \{\sigma \in A(S)_{\mathbb{C}} : 
		\mu_{\sigma}(v_1)=\mu_{\sigma}(v_2)\} \\
		&=\{ B+iH \in A(S)_{\mathbb{C}} : 
		(n_1 \beta_2-n_2 \beta_1) \cdot H=B\beta_1 \cdot H\beta_2
		-B\beta_2 \cdot H\beta_1\}. 
	\end{align*}
	Since $v$ is primitive, 
	$W_{v_1, v_2}\subsetneq A(S)_{\mathbb{C}}$ and 
	$W_{v_1, v_2}$ is a real codimension one 
	hypersurface in $A(S)_{\mathbb{C}}$. 
	For a fixed $v$, the set of 
	hypersurfaces $W_{v_1, v_2}$ are called 
	\textit{walls}. 
	It is easy to see that the walls are locally 
	finite. Also each connected component 
	\begin{align*}
		\cC \subset A(S)_{\mathbb{C}} \setminus \bigcup_{v_1+v_2=v}
		W_{v_1, v_2}
	\end{align*}
	is called a \textit{chamber}. 
	From the construction of walls, the moduli stacks 
	$\mM_{S}^{\sigma}(v)$, $\mM_{X}^{\sigma}(v)$ are constant 
	if $\sigma$ is contained in a chamber, but may change 
	when $\sigma$ crosses a wall. 
	Moreover if $\sigma$ lies in a chamber, 
	they consist of $\sigma$-stable sheaves. 
	
	Let us take 
	$\sigma \in A(S)_{\mathbb{C}}$ which lies on a wall 
	and take $\sigma_{\pm} \in A(S)_{\mathbb{C}}$ which 
	lie on its adjacent chambers. 
	Then by~\cite[Theorem~8.3]{Toddbir}, 
	the wall-crossing diagram
	\begin{align*}
		\xymatrix{
			M_X^{\sigma_+}(v) \ar[rd] & & \ar[ld] M_X^{\sigma_-}(v) \\
			& M_X^{\sigma}(v) &
		}
	\end{align*}
	is a \textit{d-critical flop}, which 
	is a d-critical analogue of usual flops in birational geometry.  
	Therefore 
	following the discussion in~\cite[Subsection~1.1.2]{TocatDT},  
	we propose the following conjecture: 
	\begin{conj}\label{conj1}
		There exists an equivalence of $\mathbb{Z}/2$-periodic
		DT categories:
		\begin{align}\notag
			\dD\tT^{\mathbb{Z}/2}(M_{X}^{\sigma_+}(v)) \stackrel{\sim}{\to}
			\dD\tT^{\mathbb{Z}/2}(M_{X}^{\sigma_-}(v)).
		\end{align}
		In particular $\dDT^{\mathbb{Z}/2}(M_X^{\sigma}(v))$ 
		is independent of a choice of 
		generic $\sigma$ up to equivalence. 
	\end{conj}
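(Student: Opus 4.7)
The plan is to reduce the conjecture to the wall-crossing equivalence for $\C$-equivariant DT categories established in~\cite{TocatDT}, using Theorem~\ref{thm:intro:compare} as the bridge. By Definition~\ref{def:period}, both $\dDT^{\mathbb{Z}/2}(M_X^{\sigma_{\pm}}(v))$ and $\dDT^{\C}(M_X^{\sigma_{\pm}}(v))$ are built from the same quasi-smooth QCA derived stack $\fM_S(v)_{\rm{qc}}^{\C\rig}$, equipped with the conical closed substack $(\zZ^{\sigma_{\pm}\us})^{\C\rig}$. Since $v$ is primitive and $\sigma_{\pm}$ lies in a chamber, we have $\mM_X^{\sigma_{\pm}\st}(v) = \mM_X^{\sigma_{\pm}}(v)$, so the setup of Theorem~\ref{thm:intro:compare} applies provided the ind-completion $\Ind \cC_{(\zZ^{\sigma_{\pm}\us})^{\C\rig}}$ is compactly generated, a property established in~\cite{TocatDT} for these moduli stacks.

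The next step is to apply Theorem~\ref{thm:intro:compare} on both sides of the wall, yielding equivalences
\begin{align*}
\overline{\dDT}^{\mathbb{Z}/2}(M_X^{\sigma_{\pm}}(v))_{\rm{dg}}
\simeq \dR \underline{\hH om}(\mathbb{C}[u^{\pm 1}], \Ind \dDT^{\C}(M_X^{\sigma_{\pm}}(v))_{\rm{dg}})^{\rm{cp}}.
\end{align*}
This reduces the problem to producing an equivalence
$\Ind \dDT^{\C}(M_X^{\sigma_+}(v))_{\rm{dg}} \simeq \Ind \dDT^{\C}(M_X^{\sigma_-}(v))_{\rm{dg}}$
of $\C$-equivariant categories. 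Under the assumption that every $\sigma$-semistable sheaf on $X$ with class $v$ push-forwards to a $\sigma$-semistable sheaf on $S$, the required $\C$-equivariant equivalence is precisely the main wall-crossing result of~\cite{TocatDT}. Applying $\dR \underline{\hH om}(\mathbb{C}[u^{\pm 1}], -)$ to both sides of this equivalence and then passing to compact objects yields the desired equivalence
$\overline{\dDT}^{\mathbb{Z}/2}(M_X^{\sigma_+}(v)) \simeq \overline{\dDT}^{\mathbb{Z}/2}(M_X^{\sigma_-}(v))$, which is the content of Theorem~\ref{intro:thm:period2}.

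The main obstacle to removing the extra assumption and establishing Conjecture~\ref{conj1} in full generality is twofold. First, the equivalence obtained by the above strategy lives only on idempotent completions; without identifying when $\overline{\dDT}^{\mathbb{Z}/2}$ agrees with $\dDT^{\mathbb{Z}/2}$ (via compact generation of $\Ind \dDT^{\mathbb{Z}/2}$), one cannot descend the equivalence to the uncompleted DT categories. Second, and more fundamentally, when push-forward does not preserve stability, the $\C$-equivariant wall-crossing proof in~\cite{TocatDT}, which exploits window subcategories on the stack $\fM_S(v)^{\C\rig}_{\rm{qc}}$, does not directly apply. A genuinely intrinsic approach for compactly supported sheaves on $X$, such as window subcategories internal to $\fM_X(v)^{\C\rig}$ together with a $(-1)$-shifted symplectic local structure theorem, would be needed. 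Combined with a direct construction of semiorthogonal decompositions at the level of $\mathbb{Z}/2$-periodic factorization categories (exploiting Proposition~\ref{prop:Koszul:Z2}), this would in principle yield the full conjecture, but implementing either ingredient remains the essential difficulty.
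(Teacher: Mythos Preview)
The statement is a \emph{conjecture}; the paper does not prove it. What the paper does establish is the weaker Theorem~\ref{thm:period2}: the idempotent-completed equivalence $\overline{\dDT}^{\mathbb{Z}/2}(M_X^{\sigma_+}(v)) \simeq \overline{\dDT}^{\mathbb{Z}/2}(M_X^{\sigma_-}(v))$ under the additional hypothesis $\mM_X^{\sigma}(v) \subset \pi_{\ast}^{-1}(\mM_S^{\sigma}(v))$. Your proposal correctly recognizes this and reproduces that argument, and your final paragraph accurately identifies the two obstructions the paper leaves open.

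On the partial result your route matches the paper's almost exactly, with one point of difference worth flagging. You assert directly an equivalence $\Ind \dDT^{\C}(M_X^{\sigma_+}(v))_{\rm{dg}} \simeq \Ind \dDT^{\C}(M_X^{\sigma_-}(v))_{\rm{dg}}$ at the dg level and then apply $\dR\underline{\hH om}(\mathbb{C}[u^{\pm 1}], -)^{\rm cp}$. The paper is more careful here: the wall-crossing result of~\cite{TocatDT} is stated as an equivalence of \emph{triangulated} categories, so the proof of Theorem~\ref{thm:period2} first explains how the window subcategory $\wW \subset \Dbc(\fM_S^{\sigma}(v))$ furnishes a genuine zig-zag of dg-equivalences $\dDT^{\C}(M_X^{\sigma_+}(v))_{\dg} \leftarrow \wW_{\dg} \to \dDT^{\C}(M_X^{\sigma_-}(v))_{\dg}$ before ind-completing and applying Theorem~\ref{thm:compareDT}. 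Your sketch implicitly assumes this dg-lift exists; the paper supplies it via the window construction. Otherwise the two arguments coincide.
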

	The result of Theorem~\ref{thm:compareDT} reduces the 
	above conjecture (up to idempotent completion) to the case of $\C$-equivariant 
	version, which was proved in~\cite{TocatDT} in several cases. 
	Indeed we have the following: 
	\begin{thm}\label{thm:period2}
		In the setting of Conjecture~\ref{conj1}, suppose that the 
		following condition holds
		\begin{align*}
			\mM_X^{\sigma}(v) \subset \pi_{\ast}^{-1}(\mM_S^{\sigma}(v)). 
		\end{align*}
		Then we have an equivalence
		\begin{align}\label{equiv:period2}
			\overline{\dD\tT}^{\mathbb{Z}/2}(M_{X}^{\sigma_+}(v)) \stackrel{\sim}{\to}
			\overline{\dD\tT}^{\mathbb{Z}/2}(M_{X}^{\sigma_-}(v)).
		\end{align}
	\end{thm}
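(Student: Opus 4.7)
The plan is to deduce the $\mathbb{Z}/2$-periodic wall-crossing equivalence from the $\C$-equivariant one, which was already established in~\cite{TocatDT} under the same push-forward hypothesis, by transporting it through the inner-Hom construction of Theorem~\ref{thm:compareDT}.

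First, I would set up the ambient data uniformly for both sides of the wall. Choose a quasi-compact open derived substack $\fM \cneq \mathfrak{M}_{S}(v)^{\C\rig}_{\rm{qc}}$ large enough to contain $\pi_{\ast}(\mM_{X}^{\sigma_{+}}(v))$ and $\pi_{\ast}(\mM_{X}^{\sigma_{-}}(v))$ simultaneously; by Remark~\ref{rmk:indepe} such an enlargement does not affect the DT categories in question. Let $\zZ_{\pm} \subset \nN \cneq t_{0}(\Omega_{\fM}[-1])$ denote the $\C$-invariant conical closed substacks cutting out $\mM_{X}^{\sigma_{\pm}}(v)^{\C\rig}$, as in Subsection~\ref{subsec:catDT:sigma}. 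The first technical task is to verify the hypothesis of Theorem~\ref{thm:compareDT}, namely that $\Ind \cC_{\zZ_{\pm}} \subset \Ind \Dbc(\fM)$ is compactly generated. This is built into the foundational setup of $\C$-equivariant DT categories for local surfaces in~\cite{TocatDT} and can be extracted from Lemmas~\ref{lem:cgen0}--\ref{lem:cgen} applied to the explicit stratification of $\zZ_{\pm}$ by Harder--Narasimhan type.

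Granting this, Theorem~\ref{thm:compareDT} supplies canonical equivalences
\[
\overline{\dDT}^{\mathbb{Z}/2}(M_{X}^{\sigma_{\pm}}(v))_{\rm{dg}} \simeq \dR\underline{\hH om}\bigl(\mathbb{C}[u^{\pm 1}],\ \Ind \dDT^{\C}(M_{X}^{\sigma_{\pm}}(v))_{\rm{dg}}\bigr)^{\rm{cp}}.
\]
The second step is to invoke the $\C$-equivariant wall-crossing equivalence of~\cite{TocatDT}: under the assumption $\mM_{X}^{\sigma}(v) \subset \pi_{\ast}^{-1}(\mM_{S}^{\sigma}(v))$ stated in the theorem, loc.~cit.\ produces an equivalence
\[
\dDT^{\C}(M_{X}^{\sigma_{+}}(v))_{\rm{dg}} \stackrel{\sim}{\to} \dDT^{\C}(M_{X}^{\sigma_{-}}(v))_{\rm{dg}}
\]
in $\mathrm{Ho}(\mathrm{dgCat})$. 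Taking ind-completion transfers this to an equivalence of the corresponding ind-categories, and then applying the bifunctor $\dR\underline{\hH om}(\mathbb{C}[u^{\pm 1}], -)$ followed by restriction to compact objects preserves the equivalence (both operations descend to $\mathrm{Ho}(\mathrm{dgCat})$ by the formal properties recalled in Subsection~\ref{subsec:notation}). Combining with the Theorem~\ref{thm:compareDT} equivalences on each side yields the desired equivalence~(\ref{equiv:period2}).

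The main obstacle is the bookkeeping in the first step: one has to check that the $\C$-equivariant wall-crossing equivalence of~\cite{TocatDT} is formulated with respect to the same ambient quasi-smooth QCA stack $\fM$ used in Definition~\ref{def:period} (so that the same $\zZ_{\pm}$ governs both the $\C$-equivariant and the $\mathbb{Z}/2$-periodic DT categories), and that the compact generation of $\Ind \cC_{\zZ_{\pm}}$ survives the $\C$-rigidification step defining (\ref{C:rig}). Once these are in place, the rest of the argument is formal transport of an equivalence through the functor $\dR\underline{\hH om}(\mathbb{C}[u^{\pm 1}], -)^{\rm{cp}}$, with no further input beyond what is already in~\cite{TocatDT} and Theorem~\ref{thm:compareDT}.
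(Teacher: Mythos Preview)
Your proposal is correct and follows essentially the same route as the paper: reduce to the $\C$-equivariant wall-crossing equivalence of~\cite{TocatDT}, then transport through $\Ind$, $\dR\underline{\hH om}(\mathbb{C}[u^{\pm 1}],-)$, and $(-)^{\rm{cp}}$ using Theorem~\ref{thm:compareDT}. The one point the paper makes explicit that you gloss over is \emph{why} the $\C$-equivariant equivalence holds at the dg level: the paper recalls that it is given by a window subcategory $\wW_{\rm{dg}} \subset \Dbc(\fM_S^{\sigma}(v))_{\rm{dg}}$, so that one has a zigzag of dg-functors $\dDT^{\C}(M_X^{\sigma_+}(v))_{\rm{dg}} \leftarrow \wW_{\rm{dg}} \to \dDT^{\C}(M_X^{\sigma_-}(v))_{\rm{dg}}$ inducing equivalences on homotopy categories, which is what licenses applying $\Ind$ and the inner Hom.
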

	\begin{proof}
		It is proved in~\cite[Theorem~1.4.4]{TocatDT}
		that there is an equivalence 
		\begin{align*}
			\dD\tT^{\C}(M_{X}^{\sigma_+}(v)) \stackrel{\sim}{\to}
			\dD\tT^{\C}(M_{X}^{\sigma_-}(v)).
		\end{align*}
		The above equivalence is given by windows, i.e. 
		there exists a triangulated subcategory 
		$\wW \subset \Dbc(\fM_S^{\sigma}(v))$ such that the composition 
		functors 
		\begin{align*}
			\wW \to \Dbc(\fM_S^{\sigma}(v)) \to \dD \tT^{\C}(\fM_S^{\sigma_{\pm}}(v))
		\end{align*}
		are equivalences. 
		The right functors are just quotient functors, so 
		they lift to dg functors 
		\begin{align}\label{quot:dg}
			\Dbc(\fM_S^{\sigma}(v))_{\dg} \to \dD \tT^{\C}(\fM_S^{\sigma_{\pm}}(v))_{\dg}.
		\end{align}
		Let $\wW_{\dg} \subset \Dbc(\fM_S^{\sigma}(v))_{\dg}$ be the 
		full dg-subcategory whose objects are isomorphic to objects in $\wW$
		in the homotopy category. 
		By composing $\wW_{\dg} \subset \Dbc(\fM_S^{\sigma}(v))_{\dg}$
		with dg-functors (\ref{quot:dg}), we
		have dg-functors  
		\begin{align*}
			\dDT^{\C}(\fM_S^{\sigma_+}(v))_{\dg}
			\leftarrow \wW_{\dg} 
			\to
			\dDT^{\C}(\fM_S^{\sigma_+}(v))_{\dg}
		\end{align*}
	which induce equivalences on homotopy categories, i.e. 
	equivalences of dg-categories in the convention of 
	Subsection~\ref{subsec:notation}. 
		By taking ind-completions, applying the inner homomorphism  
		of dg-categories
		$\dR \underline{\hH om}(\mathbb{C}[u^{\pm 1}], -)$ and 
		taking the subcategories of compact objects, we obtain an equivalence 
	by Theorem~\ref{thm:compareDT}
		\begin{align*}
			\overline{\dD\tT}^{\mathbb{Z}/2}(M_{X}^{\sigma_+}(v))_{\dg} \simeq
			\overline{\dD\tT}^{\mathbb{Z}/2}(M_{X}^{\sigma_-}(v))_{\dg}. 	
		\end{align*}
		Therefore we obtain the equivalence (\ref{equiv:period2}). 
	\end{proof}
	
	\begin{rmk}\label{rmk:DK}
		In~\cite{TocatDT}, we proved 
		several other wall-crossing equivalences or 
		fully-faithful functors between 
		$\C$-equivariaint DT categories under 
		d-critical flops or d-critical flips. 
		By the same argument of Theorem~\ref{thm:period2}, 
		we can also prove $\mathbb{Z}/2$-periodic 
		version of these equivalences or fully-faithful functors. 
		
		For example in~\cite[Theorem~5.5.5]{TocatDT}, we proved the 
		existence of a fully-faithful functor under 
		MNOP/PT wall-crossing for reduced curve classes. 
		Namely let 
		$I_n(X, \beta)$ and $P_n(X, \beta)$ be 
		MNOP and PT moduli spaces on the local surface $X$
		respectively (see~\cite[Section~4.2]{TocatDT}). 
		Similarly to Definition~\ref{def:period}, we
		can define $\mathbb{Z}/2$-periodic 
		MNOP and PT categories
		\begin{align*}
			\dDT^{\mathbb{Z}/2}(I_n(X, \beta)), \ 
			\dDT^{\mathbb{Z}/2}(P_n(X, \beta))
			\end{align*}
		following Definition~\ref{def:Z2DT}. 
		Since $I_n(X, \beta) \dashrightarrow P_n(X, \beta)$
		is a d-critical flip, we expect the existence of 
		a fully-faithful functor 
		\begin{align}\label{FF:IP}
			\dDT^{\mathbb{Z}/2}(P_n(X, \beta))
			\hookrightarrow \dDT^{\mathbb{Z}/2}(I_n(X, \beta)). 
			\end{align}
		The argument of Theorem~\ref{thm:period2}
		also proves the existence of a fully-faithful functor 
		(\ref{FF:IP})
		when $\beta$ is a reduced curve class, up to idempotent completions. 
				\end{rmk}
	
	\bibliographystyle{amsalpha}
	\bibliography{math}

	\vspace{5mm}
	
	Kavli Institute for the Physics and 
	Mathematics of the Universe (WPI), University of Tokyo,
	5-1-5 Kashiwanoha, Kashiwa, 277-8583, Japan.

	\textit{E-mail address}: yukinobu.toda@ipmu.jp
	\end{document}